\title{Universes for category theory}
\author{Zhen~Lin Low}
\date{28 November 2014}
\begin{document}

\maketitle
\footpar{Department of Pure Mathematics and Mathematical Statistics, University of Cambridge, Cambridge, UK. \textsc{E-mail address}: \texttt{Z.L.Low@dpmms.cam.ac.uk}}
\begin{abstract}
The Grothendieck universe axiom asserts that every set is a member of some set-theoretic universe $\mbfU$ that is itself a set. One can then work with entities like the category of all $\mbfU$-sets or even the category of all locally $\mbfU$-small categories, where $\mbfU$ is an ``arbitrary but fixed'' universe, all without worrying about which set-theoretic operations one may legitimately apply to these entities. Unfortunately, as soon as one allows the possibility of changing $\mbfU$, one also has to face the fact that universal constructions such as limits or adjoints or Kan extensions could, in principle, depend on the parameter $\mbfU$. We will prove this is \emph{not} the case for adjoints of accessible functors between locally presentable categories (and hence, limits and Kan extensions), making explicit the idea that ``bounded'' constructions do not depend on the choice of $\mbfU$.
\end{abstract}

\section*{Introduction}

In category theory it is often convenient to invoke a certain set-theoretic device commonly known as a `Grothendieck universe', but we shall say simply `universe', so as to simplify exposition and proofs by eliminating various circumlocutions involving cardinal bounds, proper classes \etc. In \citep[Exposé I, \Sect 0]{SGA4a}, the authors adopt the following \strong{universe axiom}:
\begin{quote}
For each set $x$, there exists a universe $\mbfU$ with $x \in \mbfU$.
\end{quote}
One then introduces a universe parameter $\mbfU$ and speaks of $\mbfU$-sets, locally $\mbfU$-small categories, and so on, with the proviso that $\mbfU$ is ``arbitrary''. We recall these notions in \Sect 1.

Having introduced universes into our ontology, it becomes necessary to ask whether an object with some universal property retains that property when we enlarge the universe. Though it sounds inconceivable, there do exist examples of badly-behaved constructions that are not stable under change-of-universe; for example, \citet{Waterhouse:1975} defined a functor $F : \cat{\CRing} \to \cat{\Setplus}$, where $\cat{\CRing}$ is the category of commutative rings in a universe $\mbfU$ and $\cat{\Setplus}$ is the category of $\mbfUplus$-sets for some universe $\mbfUplus$ with $\mbfU \in \mbfUplus$, such that the value of $F$ at any given commutative ring in $\mbfU$ does not depend on $\mbfU$, and yet the value of the fpqc sheaf associated with $F$ at the field $\mathbb{Q}$ depends on the size of $\mbfU$; more recently, \citet{Bowler:2012} has constructed an $\omega$-sequence of monads on $\cat{\Set}$ whose colimit depends on $\mbfU$, where $\cat{\Set}$ is the category of $\mbfU$-sets.

It is commonly said that `set-theoretic difficulties may be overcome with standard arguments using universes', but in light of the above remarks, it would appear \emph{prima facie} that the use of universes introduces new \emph{category-theoretic} difficulties! Of course, there are also standard arguments to overcome ostensible universe-dependence, and the purpose of the present paper is to analyse arguments that appeal to the ``boundedness'' of constructions. A classical technology for controlling size problems in category theory, due to \citet{Gabriel-Ulmer:1971}, Grothendieck and Verdier \citep[Exposé I, \Sect 9]{SGA4a}, and \citet{Makkai-Pare:1989}, is the notion of accessibility. We review this theory in \Sect 2 and apply it in \Sect 3 to study the stability of universal constructions under universe enlargement. Along the way we will identify semantic criteria for recognising inclusions of the form $\Ind[\kappa][\mbfU]{\mathbb{B}} \embedinto \Ind[\kappa][\mbfUplus]{\mathbb{B}}$, where $\mathbb{B}$ is $\mbfU$-small.

\subsection*{Acknowledgements}

The author acknowledges support from the Cambridge Commonwealth Trust and the Department of Pure Mathematics and Mathematical Statistics.
\section{Set theory}

\begin{dfn}
\label{dfn:universe}
A \strong{pre-universe} is a set $\mbfU$ satisfying these axioms:
\begin{enumerate}
\item If $x \in y$ and $y \in \mbfU$, then $x \in \mbfU$.

\item If $x \in \mbfU$ and $y \in \mbfU$ (but not necessarily distinct), then $\set{ x, y } \in \mbfU$.

\item If $x \in \mbfU$, then $\powerset{x} \in \mbfU$, where $\powerset{x}$ denotes the set of all subsets of $x$.

\item If $x \in \mbfU$ and $f : x \to \mbfU$ is a map, then $\bigcup_{i \in x} f \argp{i} \in \mbfU$.
\end{enumerate}
\needspace{2.5\baselineskip}
A \strong{universe} is a pre-universe $\mbfU$ with this additional property:
\begin{enumerate}[resume]
\item $\omega \in \mbfU$, where $\omega$ is the set of all finite (von Neumann) ordinals.
\end{enumerate}
\end{dfn}

\begin{example}
The empty set is a pre-universe, and with very mild assumptions, so is the set $\mathbf{HF}$ of all hereditarily finite sets.
\end{example}

\makenumpar 
For definiteness, we may take our base theory to be Mac Lane set theory, which is a weak subsystem of Zermelo--Fraenkel set theory with choice (ZFC); topos theorists may wish to note that Mac Lane set theory is comparable in strength to Lawvere's elementary theory of the category of sets (ETCS).\footnote{Mac Lane set theory is a subsystem of the theory $\mathrm{Z}_1$, and \citet{Mitchell:1972} has shown that one can construct a model of $\mathrm{Z}_1$ from any model of ETCS \emph{and vice versa}.} Readers interested in the details of Mac Lane set theory are referred to \citep{Mathias:2001}, but in practice, as long as one is working at all times \emph{inside some universe}, one may as well be working in ZFC. Indeed:

\begin{prop}
With the assumptions of Mac Lane set theory, any universe is a transitive model of ZFC.
\end{prop}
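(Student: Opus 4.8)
The plan is to run through the axioms of ZFC one at a time and verify that each holds in the structure $(\mbfU, {\in})$; since $\mbfU$ is a transitive set --- its transitivity being precisely the first axiom of Definition~\ref{dfn:universe} --- this will exhibit it as a transitive model. Transitivity is doubly useful here: it makes $\Delta_0$ formulas with parameters in $\mbfU$ absolute (so the axioms may be checked ``externally''), and it makes Extensionality and Foundation automatic, both holding in every transitive class. It is convenient to record first some elementary closure properties of $\mbfU$. From $\omega \in \mbfU$ and transitivity, $\varnothing \in \mbfU$. If $a \in \mbfU$ then every subset of $a$ belongs to $\powerset{a} \in \mbfU$, hence to $\mbfU$ by transitivity, so $\mbfU$ is closed under subsets of its members. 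Applying the fourth axiom to the identity map $x \to \mbfU$ (legitimate because $x \subseteq \mbfU$) shows $\bigcup x \in \mbfU$ whenever $x \in \mbfU$; combining this with the pairing axiom gives closure under binary unions, a Kuratowski-pair computation inside $\powerset{\powerset{a \cup b}}$ gives closure under binary Cartesian products, and hence $\mbfU$ is closed under sets of functions between its members. Granting this, the existential axioms Pairing, Union, Power set and Infinity follow immediately: the witnesses $\set{x, y}$, $\bigcup x$, $\powerset{x}$ and $\omega$ all lie in $\mbfU$, and transitivity ensures that $\subseteq$, the successor operation, and so on are computed correctly inside $\mbfU$, so these witnesses have the properties the axioms demand.

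The substance of the proof is Separation and Replacement, where the obstacle is that Mac Lane set theory proves only $\Delta_0$ (bounded) Separation, whereas the full schemes are needed inside $\mbfU$. The resolving observation is that relativising any formula $\varphi$ to the \emph{set} $\mbfU$ turns every quantifier of $\varphi$ into one bounded by $\mbfU$, so $\varphi^{\mbfU}$ is $\Delta_0$ in the parameter $\mbfU$. Hence, for $a \in \mbfU$ and parameters from $\mbfU$, the collection $\setbuilder{x \in a}{\varphi^{\mbfU}(x)}$ exists as a set by $\Delta_0$-Separation with $\mbfU$ admitted as a parameter, and being a subset of $a \in \mbfU$ it lies in $\mbfU$ --- this is full Separation in $\mbfU$. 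For Replacement, suppose $\varphi^{\mbfU}$ defines, with parameters from $\mbfU$, a function $F$ on some $a \in \mbfU$ with all values in $\mbfU$. The graph of $i \mapsto \set{F(i)}$ is cut out of the set $a \times \mbfU$ by the bounded condition $\exists y \in \mbfU\,(\varphi^{\mbfU}(i, y) \wedge z = \set{y})$, so it exists as a set; by the pairing axiom it is a map $a \to \mbfU$; and applying the fourth axiom to it yields $\bigcup_{i \in a} \set{F(i)} = \setbuilder{F(i)}{i \in a} \in \mbfU$, which is exactly the image $F[a]$. So Replacement holds in $\mbfU$.

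Finally, for Choice: given $a \in \mbfU$ all of whose members are nonempty, choice in the base theory provides a choice function $c$ for $a$; since $c(i) \in i \in a \in \mbfU$, transitivity gives $c(i) \in \mbfU$ for every $i$, so $c$ is a subset of $a \times \bigcup a \in \mbfU$ and therefore $c \in \mbfU$, while ``$c$ is a choice function for $a$'' is $\Delta_0$. I expect the only real obstacle to be routine care about the weak base theory: one must make sure that each auxiliary object invoked --- the graph of a definable function, a Cartesian product, a set of functions --- genuinely exists under $\Delta_0$-Separation alone, but every such object is a subset of an explicitly displayed power set and is defined by a bounded formula, so nothing goes wrong. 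As a sanity check, note that $\omega \in \mbfU$ is used only to secure Infinity and $\varnothing \in \mbfU$: a bare pre-universe already validates every other axiom of ZFC.
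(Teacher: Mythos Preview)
Your proof is correct and follows the same route as the paper's: Replacement is the substantive axiom, verified via axioms~2 and~4, and your explicit observation that relativising to the \emph{set} $\mbfU$ renders every formula $\Delta_0$ supplies exactly the detail the paper hides behind ``straightforward''. The only organisational difference is that the paper reduces Separation to Replacement rather than treating both schemes directly.
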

\begin{proof}
Let $\mbfU$ be a universe. By definition, $\mbfU$ is a transitive set containing pairs, power sets, unions, and $\omega$, so the axioms of extensionality, empty set, pairs, power sets, unions, choice, and infinity are all automatically satisfied. We must show that the axiom schemas of separation and replacement are also satisfied, and in fact it is enough to check that replacement is valid; but this is straightforward using axioms 2 and 4.
\end{proof}

\begin{prop}
\label{prop:universe.nesting}
In Mac Lane set theory:
\begin{enumerate}[(i)]
\item If $\mbfU$ is a non-empty pre-universe, then there exists a strongly inaccessible cardinal $\kappa$ such that the members of $\mbfU$ are all the sets of rank less than $\kappa$. Moreover, this $\kappa$ is the rank and the cardinality of $\mbfU$.

\item If $\mbfU$ is a universe and $\kappa$ is a strongly inaccessible cardinal such that $\kappa \in \mbfU$, then there exists a $\mbfU$-set $\mbfV_\kappa$ whose members are all the sets of rank less than $\kappa$, and $\mbfV_\kappa$ is a pre-universe.

\item If $\mbfU$ and $\mbfU'$ are pre-universes, then either $\mbfU \subseteq \mbfU'$ or $\mbfU' \subseteq \mbfU$; and if $\mbfU \subsetneqq \mbfU'$, then $\mbfU \in \mbfU'$.
\end{enumerate}
\end{prop}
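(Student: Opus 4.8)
The plan is to prove~(i) first; parts~(ii) and~(iii) then follow quickly. Let $\mbfU$ be a non-empty pre-universe. Axiom~1 says exactly that $\mbfU$ is transitive, and $\emptyset \in \mbfU$ because for any $x \in \mbfU$ we have $\powerset{x} \in \mbfU$ (axiom~3) with $\emptyset \in \powerset{x}$. Put $\kappa := \mbfU \cap \mathrm{Ord}$, the set of ordinals belonging to $\mbfU$; transitivity makes this a transitive set of ordinals, hence an ordinal, it contains every finite ordinal (iterate pairing and union), so $\omega \leq \kappa$, and $\kappa \notin \mbfU$. The crux is the identity $\mbfU = \mbfV_\kappa$. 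For $\mbfV_\kappa \subseteq \mbfU$ I would prove by transfinite induction that $\mbfV_\alpha \in \mbfU$ for every $\alpha < \kappa$: the base case $\mbfV_0 = \emptyset$ is in hand, the successor case is $\mbfV_{\alpha + 1} = \powerset{\mbfV_\alpha}$ (axiom~3), and at a limit $\lambda < \kappa$ one applies axiom~4 to the map $\alpha \mapsto \mbfV_\alpha$ with domain the $\mbfU$-set $\lambda$, obtaining $\mbfV_\lambda = \bigcup_{\alpha < \lambda} \mbfV_\alpha \in \mbfU$; then $\mbfV_\kappa \subseteq \mbfU$ by transitivity. For the reverse inclusion it suffices to prove $\rank{x} \in \mbfU$ for each $x \in \mbfU$, by $\in$-induction: assuming $\rank{y} \in \mbfU$ (hence $\rank{y} + 1 \in \mbfU$, by pairing and union) for every $y \in x$, axiom~4 applied to $y \mapsto \rank{y} + 1$ on $x$ gives $\rank{x} = \bigcup_{y \in x}(\rank{y} + 1) \in \mbfU$; so every $x \in \mbfU$ has rank below $\kappa$, that is, $x \in \mbfV_\kappa$.

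It remains, for~(i), to see that $\kappa$ is strongly inaccessible and to identify the invariants. The key observation is that axiom~4 prevents $\kappa$ from being ``reached from below'': whenever $f$ is a function whose domain is a $\mbfU$-set and $\sup_i (f(i) + 1) = \kappa$, we get $\kappa = \bigcup_i (f(i) + 1) \in \mbfU$, contradicting $\kappa \notin \mbfU$ (in each use below the domain is a $\mbfU$-set: an ordinal smaller than $\kappa$, or $\powerset{\lambda}$ with $\lambda < \kappa$). Taking $f$ to be a bijection $\card{\kappa} \to \kappa$ shows $\kappa$ is a cardinal; taking $f$ to be a cofinal map $\mathrm{cf}(\kappa) \to \kappa$ shows $\kappa$ is regular; and for a cardinal $\lambda < \kappa$ we have $\powerset{\lambda} \in \mbfU$ (axiom~3), so were $2^\lambda \geq \kappa$, a surjection $\powerset{\lambda} \to \kappa$ would be a function of the forbidden kind, whence $2^\lambda < \kappa$ and $\kappa$ is a strong limit. (If $\mbfU$ is a universe then $\omega \in \mbfU$, so $\kappa > \omega$.) Finally $\rank{\mbfU} = \rank{\mbfV_\kappa} = \kappa$, and an easy induction using regularity and the strong-limit property gives $\card{\mbfV_\alpha} < \kappa$ for all $\alpha < \kappa$, so that $\kappa \leq \card{\mbfV_\kappa} \leq \sum_{\alpha < \kappa} \card{\mbfV_\alpha} \leq \kappa$.

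For~(ii): since $\kappa \in \mbfU$, the induction in~(i) runs one step further, and axiom~4 applied to $\alpha \mapsto \mbfV_\alpha$ on the $\mbfU$-set $\kappa$ gives $\mbfV_\kappa = \bigcup_{\alpha < \kappa} \mbfV_\alpha \in \mbfU$. That $\mbfV_\kappa$ satisfies axioms~1--4 is the standard verification that $\mbfV_\kappa$ models set theory for $\kappa$ inaccessible: transitivity is immediate; closure under pairs and under power sets needs only that $\kappa$ is a limit ordinal (the relevant ranks stay below $\kappa$); and closure under the indexed unions of axiom~4 uses regularity of $\kappa$ together with $\card{x} < \kappa$ for $x \in \mbfV_\kappa$ (from the strong-limit property), which keeps the supremum of the ranks $\rank{f(i)}$ below $\kappa$. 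For~(iii): if either pre-universe is empty the claim is trivial (using $\emptyset \in \mbfU'$ when $\mbfU' \neq \emptyset$), so by~(i) write $\mbfU = \mbfV_\kappa$ and $\mbfU' = \mbfV_{\kappa'}$ for ordinals $\kappa, \kappa'$; since the ordinals are linearly ordered we may assume $\kappa \leq \kappa'$, so $\mbfU \subseteq \mbfU'$, and if this inclusion is strict then $\kappa < \kappa'$, whence $\mbfU = \mbfV_\kappa \in \mbfV_{\kappa'} = \mbfU'$ because $\rank{\mbfV_\kappa} = \kappa$.

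The substance is entirely in~(i): the two transfinite inductions identifying $\mbfU$ with $\mbfV_\kappa$---whose limit step and whose computation of $\rank{x}$ both rest essentially on axiom~4---and the repeated ``$\kappa \in \mbfU$'' contradictions that extract regularity and the strong-limit property from the closure axioms. The rest is bookkeeping, subject to the usual caveat that we are freely using foundation (to have the rank function and the cumulative hierarchy) and choice (for the cardinal arithmetic), both available in Mac Lane set theory.
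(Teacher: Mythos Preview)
The paper omits this proof entirely (its ``proof'' reads: \emph{Omitted, but straightforward}), so there is no argument to compare yours against. Your proposal supplies exactly the standard details one would expect---identify $\kappa = \mbfU \cap \mathrm{Ord}$, run the two inductions to get $\mbfU = \mbfV_\kappa$, and extract regularity and the strong-limit property from axiom~4 via the ``$\kappa \in \mbfU$'' contradictions---and is correct at the level of detail the paper evidently intends; your closing caveat about carrying out the rank function and the $\alpha \mapsto \mbfV_\alpha$ recursion within Mac~Lane set theory (which lacks full replacement) is well placed, since that is the only point at which genuine care is needed.
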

\begin{proof} \exerproof
Omitted, but straightforward.
\end{proof}

\begin{cor}
In Mac Lane set theory plus the universe axiom, for each natural number $n$ in the meta-theory, the theory obtained by adding to ZFC the axiom that there are $n$ strongly inaccessible cardinals is consistent. \qed
\end{cor}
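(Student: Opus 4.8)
The plan is to reduce the consistency assertion to the construction of a set-sized model, and then to build one by iterating the universe axiom. Recall first that, by soundness — which for a \emph{set-sized} structure is a theorem of Mac Lane set theory, since Tarskian satisfaction for such a structure is definable there — a theory is consistent as soon as some set is a model of it. So fix a natural number $n$ in the meta-theory and let $\phi_n$ be the single first-order sentence of the language of set theory asserting that there are $n$ strongly inaccessible cardinals; it is enough to exhibit a set which is a model of $\mathrm{ZFC} + \phi_n$.

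First I would construct a membership chain of universes. By the universe axiom there is a universe $\mbfU_0$; applying the universe axiom to the \emph{set} $\mbfU_0$ produces a universe $\mbfU_1$ with $\mbfU_0 \in \mbfU_1$; iterating $n$ times — a recursion of length $n$ in the meta-theory — yields universes
\[ \mbfU_0 \in \mbfU_1 \in \dots \in \mbfU_n . \]
By part~(i) of Proposition~\ref{prop:universe.nesting}, each $\mbfU_i$ has the form $\mbfV_{\kappa_i}$ for a strongly inaccessible cardinal $\kappa_i$, namely the common value of the rank and the cardinality of $\mbfU_i$; and from $\mbfU_i \in \mbfU_{i+1} = \mbfV_{\kappa_{i+1}}$ one reads off $\kappa_0 < \kappa_1 < \dots < \kappa_n$. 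In particular $\mbfU_n = \mbfV_{\kappa_n}$, and the ordinals $\kappa_0, \dots, \kappa_{n-1}$, being smaller than $\kappa_n$, are all elements of $\mbfU_n$.

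It then remains to observe that $\mbfU_n$ is a transitive model of $\mathrm{ZFC} + \phi_n$. That it models ZFC is exactly the Proposition above asserting that every universe is a transitive model of ZFC. That $\mbfU_n \models \phi_n$ amounts to checking that $\kappa_0, \dots, \kappa_{n-1}$ are still strongly inaccessible \emph{relative to} $\mbfU_n$, and this is the only point requiring a moment's thought: it is a standard absoluteness computation, for since $\mbfU_n = \mbfV_{\kappa_n}$ with $\kappa_n$ strongly inaccessible, $\mbfU_n$ is closed under the genuine power-set operation on its members and computes their cardinalities and cofinalities correctly, whence for an ordinal $\lambda < \kappa_n$ the assertions ``$\lambda$ is a regular cardinal'' and ``$2^{\mu} < \lambda$ for every $\mu < \lambda$'' hold in $\mbfU_n$ exactly when they hold in the ambient universe. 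Hence $\mbfU_n \models \phi_n$, so $\mathrm{ZFC} + \phi_n$ has a model and is consistent. (If one instead reads ``$n$ strongly inaccessible cardinals'' as ``\emph{exactly} $n$'', note that $\mbfU_n$ may contain more than $n$ of them; one then takes as the model $\mbfV_\lambda$, where $\lambda$ is the $(n+1)$-st strongly inaccessible cardinal — this exists by the chain just built, and $\mbfV_\lambda$ is a set because $\lambda \le \kappa_n$ — since by the same absoluteness the strongly inaccessible cardinals of $\mbfV_\lambda$ are precisely those below $\lambda$, of which there are exactly $n$.)
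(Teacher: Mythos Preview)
Your argument is correct and is exactly the intended one: the paper gives no proof beyond the \qed, treating the corollary as immediate from the two preceding propositions (every universe is a transitive model of ZFC, and universes correspond to strongly inaccessible cardinals), and your write-up is a careful unpacking of precisely that deduction. The only thing worth remarking is that you were right to emphasise that the iteration is of fixed meta-theoretic length $n$, since Mac Lane set theory lacks replacement and so cannot internally form the chain $\seq{\mbfU_i}{i \le n}$ for variable $n$ --- this is the reason the paper's later Remark points out that the full ordinal-indexed tower of universes requires replacement.
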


\begin{remark}
It is not clear what the consistency of ZFC plus the universe axiom is relative to Mac Lane set theory plus the universe axiom. For example, in ZFC, the universe axiom implies that there is a strictly ascending sequence of universes indexed by the class of all ordinals, but the construction of this sequence requires the axiom of replacement. 
\end{remark}

\makenumpar
We shall not require the full strength of the universe axiom in this paper, but we should at least assume that there are two universes $\mbfU$ and $\mbfUplus$, with $\mbfU \in \mbfUplus$. In what follows, the word `category' will always means a model of the first-order theory of categories \emph{inside} set theory, though not necessarily one that is a member of some universe.

\begin{dfn}
\label{dfn:U-set}
Let $\mbfU$ be a pre-universe. A \strong{$\mbfU$-set} is a member of $\mbfU$, a \strong{$\mbfU$-class} is a subset of $\mbfU$, and a \strong{proper $\mbfU$-class} is a $\mbfU$-class that is not a $\mbfU$-set.
\end{dfn}

\begin{lem}
A $\mbfU$-class $X$ is a $\mbfU$-set if and only if there exists a $\mbfU$-class $Y$ such that $X \in Y$. \qed
\end{lem}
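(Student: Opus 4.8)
The plan is to unwind the two definitions in play; no closure property of a pre-universe will be needed beyond the fact that $\mbfU$ is a set (so that all of its subsets are sets). I expect a one-line argument in each direction.

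For the `only if' direction, I would argue as follows. Suppose $X$ is a $\mbfU$-set, that is, $X \in \mbfU$. Then $\mbfU$ is a subset of itself, hence a $\mbfU$-class, and it contains $X$ as a member; so $Y = \mbfU$ already witnesses the conclusion. If one prefers a witness that is itself a $\mbfU$-set, take instead $Y = \{X\}$: its unique member lies in $\mbfU$, so $\{X\} \subseteq \mbfU$ is a $\mbfU$-class, and moreover $\{X\} = \{X, X\} \in \mbfU$ by the pairing axiom of \autoref{dfn:universe}.

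For the `if' direction, suppose $Y$ is a $\mbfU$-class with $X \in Y$. By \autoref{dfn:U-set} a $\mbfU$-class is precisely a subset of $\mbfU$, so $Y \subseteq \mbfU$; therefore $X \in \mbfU$, i.e.\ $X$ is a $\mbfU$-set. This completes the argument.

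There is no genuine obstacle here; the only point worth flagging is conceptual rather than technical. The definition of a $\mbfU$-class demands a \emph{subset} of $\mbfU$, not merely a member of $\mbfU$, and it is exactly this choice that gives the lemma its content: anything which is an element of some $\mbfU$-class automatically has all of \emph{its own} members inside $\mbfU$ and hence is ``small''. This is the shadow, internal to $\mbfU$, of the familiar fact that in an NBG-style set theory a proper class is never a member of anything.
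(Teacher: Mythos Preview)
Your argument is correct; indeed the paper omits the proof entirely (the \qedsymbol\ appears immediately after the statement), so your unwinding of the definitions is exactly the intended reasoning and there is nothing to compare.
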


\begin{prop}
If $\mbfU$ is a universe in Mac Lane set theory, then the collection of all $\mbfU$-classes is a transitive model of Morse--Kelley class--set theory (MK), and so is a transitive model of von Neumann--Bernays--Gödel class--set theory (NBG) in particular. 
\end{prop}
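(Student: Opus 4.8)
The plan is to display the intended model explicitly and then verify the axioms of MK in a few families, leaning on the structural facts about universes already in hand. I would take the \emph{classes} of the model to be the $\mbfU$-classes, i.e.\ the members of $\powerset{\mbfU}$, which form an honest set of Mac Lane set theory because $\mbfU$ is a set and power sets are available; membership is interpreted by the genuine relation $\in$. By the preceding lemma, the \emph{sets} of the model — the classes that are members of some class — are then exactly the members of $\mbfU$. Since $\mbfU$ is transitive, $\in$ is absolute, so Foundation holds (ranks are computed correctly) and Class Extensionality is trivial: two subsets of $\mbfU$ with the same members are equal.

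The \enquote{set} axioms of MK — Pairing, Union, Power Set, Infinity, Choice, and Replacement — I would derive from the proposition above that $\mbfU$ is a transitive model of ZFC, after checking that the model's interpretations agree with the clauses of Definition~\ref{dfn:universe}: Pairing is clause~(2); Union follows from clause~(4) applied to the identity map; Power Set is clause~(3) together with the observation that every real subset of an $x \in \mbfU$ lies in $\powerset{x} \in \mbfU$, hence in $\mbfU$, so the model computes power sets correctly; Infinity is clause~(5) and absoluteness of $\omega$; set Choice is inherited from the ambient axiom of choice. For the class form of Replacement, clause~(4) does the work directly: if the model sees a $\mbfU$-class $F$ as a function and $a \in \mbfU$, then transitivity forces $F(i) \in \mbfU$ for each $i \in a$, the assignment $i \mapsto \set{F(i)}$ is a map $a \to \mbfU$ (it exists by $\Delta_0$-separation, bounded inside $a \times \powerset{\mbfU}$, with $F$ as a parameter), and hence $\set{F(i)}{i \in a} = \bigcup_{i \in a} \set{F(i)} \in \mbfU$.

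The one step that needs genuine care, and where I expect the only real friction, is the impredicative Class Comprehension scheme — precisely because Mac Lane set theory gives us only $\Delta_0$-separation. Given a formula $\phi(x)$ in the language of MK with parameters that are $\mbfU$-classes or members of $\mbfU$, I would relativise it to the model: each class quantifier becomes one bounded by $\powerset{\mbfU}$ (a subset of $\mbfU$ being the same thing as a member of $\powerset{\mbfU}$) and each set quantifier becomes one bounded by $\mbfU$, while atomic subformulas are unchanged. Adjoining $\mbfU$ and $\powerset{\mbfU}$ as two further parameters, the relativised formula $\phi^{\powerset{\mbfU}}$ is therefore $\Delta_0$, so $\set{x \in \mbfU}{\phi^{\powerset{\mbfU}}(x)}$ exists by $\Delta_0$-separation, and being a subset of $\mbfU$ it is a $\mbfU$-class — exactly the instance of Comprehension required. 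The point to make is that relativisation to the \emph{set} $\powerset{\mbfU}$ replaces the unbounded quantifiers of MK by bounded ones, so the impredicativity costs nothing over the weak base theory.

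Finally, NBG is by definition the subtheory of MK in which Comprehension is restricted to formulas whose quantifiers range only over sets, so any model of MK is a model of NBG, giving the last clause. (If one wants a global choice axiom in MK as well, it holds too: by Proposition~\ref{prop:universe.nesting}(i) the cardinality of $\mbfU$ equals its rank $\kappa$, so a well-ordering of $\mbfU$ of order type $\kappa$ is a subset of $\mbfU \times \mbfU$, and $\mbfU \times \mbfU \subseteq \mbfU$ by clause~(2), so this well-ordering is a $\mbfU$-class.)
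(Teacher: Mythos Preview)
Your argument is correct, and in fact supplies what the paper does not: the paper's own proof is simply ``Omitted'' (marked as an exercise), so there is no approach to compare against. You have correctly identified the one place where the weak base theory could bite --- the impredicative Class Comprehension scheme --- and dispatched it with the right observation: since $\powerset{\mbfU}$ is an honest set in Mac Lane set theory, relativising class quantifiers to it turns every MK-formula into a $\Delta_0$-formula with $\mbfU$ and $\powerset{\mbfU}$ as parameters, whence $\Delta_0$-separation suffices. That is the heart of the matter.

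Two small remarks. In the Replacement step, your bounding set $a \times \powerset{\mbfU}$ works, but $a \times \mbfU$ is already enough since $\set{F(i)} \in \mbfU$ by clause~(2); also, one should first cut $a$ down to $a \cap \dom(F)$ (a $\Delta_0$-definable subset of $a$, hence in $\mbfU$) so that the assignment $i \mapsto \set{F(i)}$ is genuinely a total function on its domain before invoking clause~(4). In the global choice remark, the phrase ``of order type $\kappa$'' is not needed and is mildly delicate in Mac Lane set theory (which lacks full Replacement and hence does not obviously prove every well-ordering is isomorphic to a von Neumann ordinal); but any well-ordering of $\mbfU$, which exists by the ambient axiom of choice, is already a subset of $\mbfU \times \mbfU \subseteq \mbfU$ and hence a $\mbfU$-class, so the conclusion stands.
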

\begin{proof} \exerproof
Omitted.
\end{proof}

\begin{dfn}
\label{dfn:U-small.category}
A \strong{$\mbfU$-small category} is a category $\mathbb{C}$ such that $\ob \mathbb{C}$ and $\mor \mathbb{C}$ are $\mbfU$-sets. A \strong{locally $\mbfU$-small category} is a category $\mathcal{D}$ satisfying these conditions:
\begin{itemize}
\item $\ob \mathcal{D}$ and $\mor \mathcal{D}$ are $\mbfU$-classes, and

\item for all objects $x$ and $y$ in $\mathcal{D}$, the hom-set $\Hom[\mathcal{D}]{x}{y}$ is a $\mbfU$-set.
\end{itemize}
An \strong{essentially $\mbfU$-small category} is a category $\mathcal{D}$ for which there exist a $\mbfU$-small category $\mathbb{C}$ and a functor $\mathbb{C} \to \mathcal{D}$ that is fully faithful and essentially surjective on objects.
\end{dfn} 

\begin{dfn}
Let $\kappa$ be a regular cardinal. A \strong{$\kappa$-small category} is a category $\mathbb{C}$ such that $\mor \mathbb{C}$ has cardinality $< \kappa$. A \strong{finite category} is an $\aleph_0$-small category, \ie a category $\mathbb{C}$ such that $\mor \mathbb{C}$ is finite. A \strong{finite diagram} (\resp \strong{$\kappa$-small diagram}, \strong{$\mbfU$-small diagram}) in a category $\mathcal{C}$ is a functor $\mathbb{D} \to \mathcal{C}$ where $\mathbb{D}$ is a finite (\resp $\kappa$-small, $\mbfU$-small) category.
\end{dfn}

\begin{prop}
If $\mathbb{D}$ is a $\mbfU$-small category and $\mathcal{C}$ is a locally $\mbfU$-small category, then the functor category $\Func{\mathbb{D}}{\mathcal{C}}$ is locally $\mbfU$-small.
\end{prop}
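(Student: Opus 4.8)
The plan is to check, directly from the pre-universe axioms, the three conditions that define local $\mbfU$-smallness: that $\ob\Func{\mathbb{D}}{\mathcal{C}}$ and $\mor\Func{\mathbb{D}}{\mathcal{C}}$ are $\mbfU$-classes, and that each hom-set of $\Func{\mathbb{D}}{\mathcal{C}}$ is a $\mbfU$-set. The one genuine input is that $\mathbb{D}$ is $\mbfU$-small, so that axiom 4 applies with $\ob\mathbb{D}$ or $\mor\mathbb{D}$ as index set; concretely I would use the form of replacement it gives: if $A \in \mbfU$ and $B_i \in \mbfU$ for every $i \in A$, then both $\bigcup_{i \in A} B_i \in \mbfU$ and $\{ B_i : i \in A \} \in \mbfU$. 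Alongside this I would use freely that $\mbfU$ is closed under subsets, binary products, and function sets $Y^X$ with $X, Y \in \mbfU$, all of which are routine consequences of the pre-universe axioms.

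First the object and morphism collections. A functor $F : \mathbb{D} \to \mathcal{C}$ is determined by its object map $F_0 : \ob\mathbb{D} \to \ob\mathcal{C}$ and morphism map $F_1 : \mor\mathbb{D} \to \mor\mathcal{C}$. Since $\ob\mathbb{D}, \mor\mathbb{D} \in \mbfU$ and every value of $F_0$ (\resp $F_1$) lies in the $\mbfU$-class $\ob\mathcal{C}$ (\resp $\mor\mathcal{C}$), hence in $\mbfU$, the images $F_0[\ob\mathbb{D}]$ and $F_1[\mor\mathbb{D}]$ are $\mbfU$-sets; so $F_0$ and $F_1$, being subsets of the $\mbfU$-sets $\ob\mathbb{D} \times F_0[\ob\mathbb{D}]$ and $\mor\mathbb{D} \times F_1[\mor\mathbb{D}]$, are themselves members of $\mbfU$, and therefore so is $F$. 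Hence every object of $\Func{\mathbb{D}}{\mathcal{C}}$ lies in $\mbfU$, \ie $\ob\Func{\mathbb{D}}{\mathcal{C}}$ is a $\mbfU$-class. The same reasoning, applied to natural transformations — each being set-theoretically a triple $(F, G, \gamma)$ with $\gamma : \ob\mathbb{D} \to \mor\mathcal{C}$ its family of components — shows $\mor\Func{\mathbb{D}}{\mathcal{C}}$ is a $\mbfU$-class.

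Next the hom-sets, and this is the only place where local $\mbfU$-smallness of $\mathcal{C}$ is needed, rather than merely $\ob\mathcal{C}$ and $\mor\mathcal{C}$ being $\mbfU$-classes. Fix $F, G : \mathbb{D} \to \mathcal{C}$. Each $\Hom[\mathcal{C}]{F i}{G i}$ is a $\mbfU$-set, so $H = \bigcup_{i \in \ob\mathbb{D}} \Hom[\mathcal{C}]{F i}{G i}$ lies in $\mbfU$ by axiom 4. Every natural transformation $F \Rightarrow G$ is determined by the function $\ob\mathbb{D} \to H$ sending $i$ to its component at $i$, so $\Hom[{\Func{\mathbb{D}}{\mathcal{C}}}]{F}{G} \embedinto H^{\ob\mathbb{D}}$; since $H^{\ob\mathbb{D}} \in \mbfU$ and subsets of $\mbfU$-sets are $\mbfU$-sets, the hom-set is a $\mbfU$-set, as required.

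I do not anticipate a real difficulty; the step worth flagging is the temptation — to be resisted — to bound the functors by $(\mor\mathcal{C})^{\mor\mathbb{D}}$, or the natural transformations by $(\mor\mathcal{C})^{\ob\mathbb{D}}$, since these need not be $\mbfU$-sets when $\mathcal{C}$ is not $\mbfU$-small. The correct move, used throughout, is to first invoke axiom 4 to collapse the possibly proper $\mbfU$-class of potential values to the $\mbfU$-set of values actually taken, after which pairing, power sets, and transitivity finish the job. It may be worth recording that both hypotheses are genuinely used: $\mbfU$-smallness of $\mathbb{D}$ for the object and morphism collections, and local $\mbfU$-smallness of $\mathcal{C}$ for the hom-sets.
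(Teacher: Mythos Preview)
Your proof is correct and follows essentially the same approach as the paper: represent functors by the graphs of their object and morphism maps, represent natural transformations as triples $(F,G,A)$ with $A$ the graph of the component map, and use axiom~4 (the $\mbfU$-replacement axiom) to conclude these graphs are $\mbfU$-sets. You are in fact more thorough than the paper's own proof, which handles the object and morphism classes but leaves the verification that each hom-set is a $\mbfU$-set implicit; your argument via $H = \bigcup_{i} \Hom[\mathcal{C}]{F i}{G i}$ and the embedding into $H^{\ob\mathbb{D}}$ fills that gap cleanly, and your warning about the non-$\mbfU$-set $(\mor\mathcal{C})^{\mor\mathbb{D}}$ is exactly the point the paper's caveat about ``set-theoretic implementation'' is gesturing at.
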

\begin{proof}
Strictly speaking, this depends on the set-theoretic implementation of ordered pairs, categories, functors, \etc, but at the very least $\Func{\mathbb{D}}{\mathcal{C}}$ should be isomorphic to a locally $\mbfU$-small category.

In the context of $\Func{\mathbb{D}}{\mathcal{C}}$, we may regard functors $\mathbb{D} \to \mathcal{C}$ as being the pair consisting of the \emph{graph} of the object map $\ob \mathbb{D} \to \ob \mathcal{C}$ and the \emph{graph} of the morphism map $\mor \mathbb{D} \to \mor \mathcal{C}$, and these are $\mbfU$-sets by the $\mbfU$-replacement axiom. Similarly, if $F$ and $G$ are objects in $\Func{\mathbb{D}}{\mathcal{C}}$, then we may regard a natural transformation $\alpha : F \hoto G$ as being the triple $\tuple{F, G, A}$, where $A$ is the set of all pairs $\tuple{c, \alpha_c}$.
\end{proof}

One complication introduced by having multiple universes concerns the existence of (co)limits.

\begin{thm}[Freyd]
\label{thm:Freyd.complete.preorder}
Let $\mathcal{C}$ be a category and let $\kappa$ be a cardinal such that $\card{\mor \mathcal{C}} \le \kappa$. If $\mathcal{C}$ has products for families of size $\kappa$, then any two parallel morphisms in $\mathcal{C}$ must be equal.
\end{thm}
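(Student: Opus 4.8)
The plan is to establish the statement by contradiction. Assume $\card{\mor \mathcal{C}} \le \kappa$, that $\mathcal{C}$ has products for families of size $\kappa$, and—towards a contradiction—that $\mathcal{C}$ contains two \emph{distinct} parallel morphisms; then I will construct an injection from $\powerset{\kappa}$ into $\mor \mathcal{C}$, which is absurd by Cantor's theorem. (If no two distinct parallel morphisms exist in $\mathcal{C}$, the conclusion holds vacuously, so there is nothing to prove.)

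So suppose $f, g : A \to B$ in $\mathcal{C}$ with $f \ne g$. The first step is to form the product $P$ of the constant family $\seq{B}{i \in \kappa}$, \ie of $\kappa$ copies of $B$; this is a family of size $\kappa$, so $P$ exists by hypothesis, and it comes with projections $\pi_i : P \to B$ for $i \in \kappa$. The second step is, for each subset $T \subseteq \kappa$, to invoke the universal property of $P$ to obtain the unique morphism $h_T : A \to P$ satisfying $\pi_i \circ h_T = f$ for $i \in T$ and $\pi_i \circ h_T = g$ for $i \notin T$.

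The heart of the argument is that $T \mapsto h_T$ is injective: if $T \ne T'$ and $i$ lies in exactly one of them, then $\pi_i \circ h_T$ and $\pi_i \circ h_{T'}$ are $f$ and $g$ in one order or the other, hence unequal, so $h_T \ne h_{T'}$. This exhibits an injection $\powerset{\kappa} \embedinto \Hom[\mathcal{C}]{A}{P} \subseteq \mor \mathcal{C}$, whence $2^{\kappa} \le \card{\mor \mathcal{C}} \le \kappa$, contradicting Cantor's theorem. I do not foresee any genuine obstacle here: the only matters requiring (entirely routine) care are that the constant family indexed by $\kappa$ is a bona fide ``family of size $\kappa$'', so that $P$ is available, and that the $h_T$ are literally pairwise-distinct members of $\mor \mathcal{C}$—both of which follow at once from the product projections separating $f$ from $g$.
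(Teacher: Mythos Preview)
Your proof is correct and follows essentially the same argument as the paper: both assume distinct parallel $f, g : A \to B$, form the $\kappa$-fold product $P$ of $B$, and observe that the universal property yields at least $2^\kappa$ distinct morphisms $A \to P$, contradicting $\card{\mor \mathcal{C}} \le \kappa$. Your version simply spells out the injection $\powerset{\kappa} \embedinto \Hom[\mathcal{C}]{A}{P}$ explicitly via the maps $h_T$, whereas the paper leaves this step implicit.
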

\begin{proof}
Suppose, for a contradiction, that $f, g : X \to Y$ are distinct morphisms in $\mathcal{C}$. Let $Z$ be the product of $\kappa$-many copies of $Y$ in $\mathcal{C}$. The universal property of products implies there are at least $2^\kappa$-many distinct morphisms $X \to Z$; but $\Hom[\mathcal{C}]{X}{Z} \subseteq \mor \mathcal{C}$, so this is an absurdity.
\end{proof}

\begin{dfn}
Let $\mbfU$ be a pre-universe. A \strong{$\mbfU$-complete} (\resp \strong{$\mbfU$-\allowhyphens cocom\-plete}) \strong{category} is a category $\mathcal{C}$ with the following property:
\begin{itemize}
\item For all $\mbfU$-small diagrams $A : \mathbb{D} \to \mathcal{C}$, a limit (\resp colimit) of $A$ exists in $\mathcal{C}$.
\end{itemize}
We may instead say $\mathcal{C}$ has all \strong{finite limits} (\resp \strong{finite colimits}) in the special case $\mbfU = \mathbf{HF}$.
\end{dfn}

\begin{prop}
\label{prop:limits.and.colimits}
Let $\mathcal{C}$ be a category and let $\mbfU$ be a non-empty pre-universe. The following are equivalent:
\begin{enumerate}[(i)]
\item $\mathcal{C}$ is $\mbfU$-complete.

\item $\mathcal{C}$ has all finite limits and products for all families of objects indexed by a $\mbfU$-set.

\item For each $\mbfU$-small category $\mathbb{D}$, there exists an adjunction
\[
\Delta \dashv {\prolim}_{\mathbb{D}} : \Func{\mathbb{D}}{\mathcal{C}} \to \mathcal{C}
\]
where $\Delta X$ is the constant functor with value $X$.
\end{enumerate}
\needspace{2.5\baselineskip}
Dually, the following are equivalent:
\begin{enumerate}[(i\prime)]
\item $\mathcal{C}$ is $\mbfU$-cocomplete.

\item $\mathcal{C}$ has all finite colimits and coproducts for all families of objects indexed by a $\mbfU$-set.

\item For each $\mbfU$-small category $\mathbb{D}$, there exists an adjunction
\[
{\indlim}_{\mathbb{D}} \dashv \Delta : \mathcal{C} \to \Func{\mathbb{D}}{\mathcal{C}}
\]
where $\Delta X$ is the constant functor with value $X$.
\end{enumerate}
\end{prop}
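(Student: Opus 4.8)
The plan is to prove the equivalences (i)~$\Leftrightarrow$~(ii) and (i)~$\Leftrightarrow$~(iii); the primed statements then follow by applying these to $\op{\mathcal{C}}$, using that a colimit of a diagram $A : \mathbb{D} \to \mathcal{C}$ is the same thing as a limit of the associated diagram $\op{\mathbb{D}} \to \op{\mathcal{C}}$, and that $\op{\Func{\mathbb{D}}{\mathcal{C}}}$ is isomorphic to $\Func{\op{\mathbb{D}}}{\op{\mathcal{C}}}$. So I shall only discuss limits.

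For (i)~$\Rightarrow$~(ii): since $\mbfU$ is a non-empty pre-universe we have $\mathbf{HF} \subseteq \mbfU$ (by Proposition~\ref{prop:universe.nesting}), so every $\mathbf{HF}$-small diagram in $\mathcal{C}$ is in particular a $\mbfU$-small diagram; hence $\mbfU$-completeness entails $\mathbf{HF}$-completeness, that is, the existence of all finite limits. Moreover the discrete category on any $\mbfU$-set is $\mbfU$-small, so $\mbfU$-completeness supplies a product for every family of objects indexed by a $\mbfU$-set. For the converse, (ii)~$\Rightarrow$~(i), I would carry out the classical construction of a general limit from products and equalisers. Given a $\mbfU$-small diagram $A : \mathbb{D} \to \mathcal{C}$, the sets $\ob \mathbb{D}$ and $\mor \mathbb{D}$ are $\mbfU$-sets, so we may form the products $P = \prod_{d \in \ob \mathbb{D}} A d$ and $Q = \prod_{(u : d \to d') \in \mor \mathbb{D}} A d'$, together with the parallel pair $s, t : P \to Q$ whose components at a morphism $u : d \to d'$ are $\pi_{d'}$ and $(A u) \circ \pi_{d}$ respectively; an equaliser $e : L \to P$ of $s$ and $t$ exists because it is a finite limit, and a routine check using the universal properties of $P$, $Q$ and $e$ shows that $L$, equipped with the legs $\pi_{d} \circ e : L \to A d$, is a limit cone for $A$. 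Thus $\mathcal{C}$ is $\mbfU$-complete.

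For (iii)~$\Rightarrow$~(i): given an adjunction $\Delta \dashv \prolim_{\mathbb{D}}$, for each $A$ in $\Func{\mathbb{D}}{\mathcal{C}}$ the counit component $\varepsilon_A : \Delta(\prolim_{\mathbb{D}} A) \to A$ is a cone on $A$, and the adjunction bijection $\Hom[\mathcal{C}]{X}{\prolim_{\mathbb{D}} A} \cong \Hom[{\Func{\mathbb{D}}{\mathcal{C}}}]{\Delta X}{A}$, natural in $X$, says precisely that every cone on $A$ factors uniquely through $\varepsilon_A$; so $\prolim_{\mathbb{D}} A$ is a limit of $A$, and letting $\mathbb{D}$ range over all $\mbfU$-small categories gives (i). Conversely, assuming (i), choose a limit cone for each object of $\Func{\mathbb{D}}{\mathcal{C}}$; the universal property upgrades these choices to a functor $\prolim_{\mathbb{D}} : \Func{\mathbb{D}}{\mathcal{C}} \to \mathcal{C}$ together with a family of bijections as displayed, natural in both variables — that is, to a right adjoint of $\Delta$.

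I do not anticipate a genuine obstacle, the mathematics being entirely standard; the only points that demand care are the set-theoretic verifications that the index categories appearing above (the discrete category on a $\mbfU$-set, the finite index categories needed for equalisers, and the ``two products and an equaliser'' shape built from $\ob \mathbb{D}$ and $\mor \mathbb{D}$) really are $\mbfU$-small — which is where the hypothesis that $\mbfU$ be non-empty, and the content of Proposition~\ref{prop:universe.nesting}, are used — and, for (i)~$\Rightarrow$~(iii), the promotion of a mere choice of limit cones to an honest functor and adjunction.
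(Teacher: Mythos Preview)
Your proof is correct and supplies exactly the standard argument that the paper itself omits: the paper's ``proof'' consists of the single remark that this is a standard result, together with the observation that a sufficiently strong form of the axiom of choice is needed to pass from (ii) to (iii). Your sketch covers this --- the choice occurs where you write ``choose a limit cone for each object of $\Func{\mathbb{D}}{\mathcal{C}}$'' in the step (i)~$\Rightarrow$~(iii) --- and you rightly flag it at the end as one of the two points demanding care.
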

\begin{proof} \exerproof
This is a standard result; but we remark that we do require a sufficiently powerful form of the axiom of choice to pass from (ii) to (iii).
\end{proof}

\begin{thm}
\label{thm:foundations:stability.of.limits.and.colimits.of.sets}
Let $\mbfU$ be a pre-universe, let $\mbfUplus$ be a universe with $\mbfU \in \mbfUplus$, let $\cat{\Set}$ be the category of $\mbfU$-sets, and let $\cat{\Set^+}$ be the category of $\mbfUplus$-sets.
\begin{enumerate}[(i)]
\item If $X : \mathbb{D} \to \cat{\Set}$ is a $\mbfU$-small diagram, then there exist a limit and a colimit for $X$ in $\cat{\Set}$.

\item The inclusion $\cat{\Set} \embedinto \cat{\Set^+}$ is fully faithful and preserves limits and colimits for all $\mbfU$-small diagrams.
\end{enumerate}
\end{thm}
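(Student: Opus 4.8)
The plan is to handle part~(i) by the familiar explicit constructions of limits and colimits in a category of sets, and then to observe in part~(ii) that those constructions are \emph{absolute}, so that carrying them out inside $\mbfU$ or inside $\mbfUplus$ yields literally the same set. We may assume $\mbfU \neq \emptyset$, since otherwise there are no $\mbfU$-small categories and both assertions are vacuous. For part~(i) I would verify condition~(ii) of Proposition~\ref{prop:limits.and.colimits} and its dual. Finite products and equalizers in $\cat{\Set}$ are isomorphic to subsets of finite cartesian products and so are $\mbfU$-sets; and if $I$ is a $\mbfU$-set and $(A_i)_{i \in I}$ is a family of $\mbfU$-sets, then $\bigcup_{i \in I} A_i \in \mbfU$ by axiom~4 of Definition~\ref{dfn:universe} applied to the map $i \mapsto A_i$, so that $\prod_{i \in I} A_i$, being (up to isomorphism) a subset of $\powerset{I \times \bigcup_{i \in I} A_i}$, is a $\mbfU$-set as well. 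Hence $\cat{\Set}$ has all finite limits and all $\mbfU$-small products, so it is $\mbfU$-complete; dually, taking $\coprod_{i \in I} A_i$ to be $\bigcup_{i \in I}(\{i\} \times A_i)$ shows $\cat{\Set}$ has all $\mbfU$-small coproducts, and with finite colimits (quotients of finite coproducts, hence subsets of power sets of $\mbfU$-sets) this makes it $\mbfU$-cocomplete.

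For part~(ii), first note $\mbfU \subseteq \mbfUplus$: since $\mbfU \in \mbfUplus$ and $\mbfUplus$ is transitive (axiom~1 of Definition~\ref{dfn:universe}), every member of $\mbfU$ is a member of $\mbfUplus$. Consequently every $\mbfU$-set is a $\mbfUplus$-set, so the inclusion functor is well defined on objects; and for $\mbfU$-sets $A$ and $B$ the collection of all functions $A \to B$ is one and the same set whether formed inside $\mbfU$ or inside $\mbfUplus$, a function being nothing but a set of ordered pairs. Thus $\Hom[\cat{\Set}]{A}{B} = \Hom[\cat{\Set^+}]{A}{B}$ on the nose, and the inclusion $\cat{\Set} \embedinto \cat{\Set^+}$ is fully faithful; in particular $\cat{\Set}$ is a full subcategory of $\cat{\Set^+}$.

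Preservation of $\mbfU$-small limits and colimits then reduces to a general observation about full subcategories: if $\mathcal{A}$ is a full subcategory of $\mathcal{B}$ and a diagram valued in $\mathcal{A}$ has a limit (resp.\ colimit) in $\mathcal{B}$ whose vertex lies in $\mathcal{A}$, then that same object is a limit (resp.\ colimit) in $\mathcal{A}$ and the inclusion preserves it --- by fullness the entire limiting (resp.\ colimiting) cone lies in $\mathcal{A}$, and any competing cone out of (resp.\ into) an object of $\mathcal{A}$ factors through it uniquely already in $\mathcal{B}$, the factorisation being a morphism of $\mathcal{A}$ again by fullness. Now let $X : \mathbb{D} \to \cat{\Set}$ be a $\mbfU$-small diagram. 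Since $\ob \mathbb{D}$ and $\mor \mathbb{D}$ lie in $\mbfU \subseteq \mbfUplus$, $X$ is also a $\mbfUplus$-small diagram, so by part~(i) applied with the universe $\mbfUplus$ in place of $\mbfU$, limits and colimits of $X$ exist in $\cat{\Set^+}$; moreover they are given by the usual recipes --- the limit being the set of compatible families $(x_d)_{d \in \ob\mathbb{D}}$ with $x_d \in X(d)$ and $X(f)(x_d) = x_{d'}$ for every $f : d \to d'$, a subset of $\prod_{d} X(d)$, and the colimit being the quotient of $\coprod_{d} X(d)$ by the equivalence relation generated by $(d, x) \sim (d', X(f)(x))$. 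By part~(i) applied to $\mbfU$ itself, both of these sets --- and the maps constituting their (co)cones --- already lie in $\cat{\Set}$, so the general observation applies and we are done.

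The one step that genuinely requires care --- and is really the whole content of the statement --- is the absoluteness claim just used: that the limit and colimit of $X$ computed in $\cat{\Set^+}$ are given by exactly those formulas, and hence coincide with the ones computed in $\cat{\Set}$. For limits this is immediate, because ``the subset of the product cut out by the compatibility equations'' refers to neither $\mbfU$ nor $\mbfUplus$. For colimits one must additionally observe that the equivalence relation on $\coprod_d X(d)$ generated by a given relation $R$ is absolute: it equals $\bigcup_{n < \omega} R_n$, where $R_0$ is the reflexive and symmetric closure of $R$ and $R_{n+1} = R_n \circ R_n$, a construction carried out identically in any pre-universe containing $\coprod_d X(d)$. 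Hence the quotient set is independent of the ambient universe too, and everything else is bookkeeping with the axioms of Definition~\ref{dfn:universe}.
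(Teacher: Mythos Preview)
Your argument is correct and follows essentially the same approach as the paper: both rely on the fact that products, equalisers, coproducts, coequalisers, and hom-sets are given by explicit set-theoretic formulas that do not depend on the ambient universe, so the preservation properties are immediate. The paper's proof is a one-line remark to this effect, whereas you have spelled out the details (via Proposition~\ref{prop:limits.and.colimits} and an absoluteness discussion for the generated equivalence relation); the extra care is sound but not strictly needed.
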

\begin{proof} \obviousproof
One can construct products, equalisers, coproducts, coequalisers, and hom-sets in a completely explicit way, making the preservation properties obvious. 
\end{proof}

\begin{cor}
The inclusion $\cat{\Set} \embedinto \cat{\Set^+}$ reflects limits and colimits for all $\mbfU$-small diagrams. \qed
\end{cor}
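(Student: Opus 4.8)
The plan is to deduce this formally from Theorem~\ref{thm:foundations:stability.of.limits.and.colimits.of.sets}, using the general principle that a fully faithful functor $F : \mathcal{A} \to \mathcal{B}$ reflects every (co)limit that exists in $\mathcal{A}$ and is preserved by $F$. The one ingredient needed beyond this is the elementary observation that a fully faithful functor reflects isomorphisms: if $F f$ has a two-sided inverse, fullness lets us write that inverse as $F g$, and faithfulness then forces $g f$ and $f g$ to be identities, so $f$ is invertible already in $\mathcal{A}$.

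For the limit half, I would take a $\mbfU$-small diagram $A : \mathbb{D} \to \cat{\Set}$ and a cone $\lambda : \Delta L \to A$ in $\cat{\Set}$ whose image under the inclusion $\cat{\Set} \embedinto \cat{\Set^+}$ is a limit cone. By part~(i) of the theorem $A$ has a genuine limit cone $\mu : \Delta M \to A$ in $\cat{\Set}$, and by part~(ii) its image in $\cat{\Set^+}$ is again a limit cone. The universal property of $\mu$ in $\cat{\Set}$ yields a unique morphism of cones $h : L \to M$, and applying the inclusion exhibits $h$ as the canonical comparison between the two limit cones over the image of $A$ in $\cat{\Set^+}$; hence $h$ is an isomorphism in $\cat{\Set^+}$. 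Since the inclusion is fully faithful, $h$ is already an isomorphism in $\cat{\Set}$, compatible with $\lambda$ and $\mu$, so $\lambda$ is a limit cone in $\cat{\Set}$. The colimit half is proved by the evident dual argument, invoking the colimit clauses of parts~(i) and~(ii) instead.

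I do not expect any real obstacle here. The only points that call for a little care are the bookkeeping showing that the comparison map $h$ is compatible with both cones on the $\cat{\Set^+}$ side (so that it genuinely witnesses an isomorphism of limit cones, not merely an abstract isomorphism $L \cong M$), and the recognition that the hypothesis supplied by part~(i) --- that $\cat{\Set}$ actually possesses the limits and colimits in question --- is \emph{essential}: reflection of (co)limits can fail for a fully faithful, (co)limit-preserving functor whose domain lacks the relevant (co)limits.
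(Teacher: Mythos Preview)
Your argument is correct and is one natural way to unpack the immediate $\qed$ the paper gives. One point deserves correction, however: your closing remark that part~(i) of the theorem is \emph{essential} is mistaken. A fully faithful functor reflects limits and colimits unconditionally, with no hypothesis on their existence in the domain. Concretely, if $\lambda : \Delta L \to A$ is a cone in $\cat{\Set}$ whose image in $\cat{\Set^+}$ is limiting, then for any cone $\mu : \Delta X \to A$ in $\cat{\Set}$ the unique factorising morphism $g : X \to L$ in $\cat{\Set^+}$ already lies in $\cat{\Set}$ by fullness, and faithfulness forces $\lambda_d \circ g = \mu_d$ for every $d$; uniqueness follows the same way. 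So neither the existence clause~(i) nor the preservation clause~(ii) is strictly needed here---only the full faithfulness of the inclusion. Your detour through an auxiliary limit $M$ and a comparison isomorphism works, but it is longer than necessary and led you to overstate the role of part~(i).
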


\section{Accessibility and ind-completions}

In this section, we recall some of the basic theory of locally presentable and accessible categories, making explicit any (apparent) dependence on the choice of universe.

\begin{thm}
\label{thm:filtered.colimits.in.set}
Let $\mbfU$ be a pre-universe, let $\cat{\Set}$ be the category of $\mbfU$-sets, and let $\kappa$ be any regular cardinal. Given a $\mbfU$-small category $\mathbb{D}$, the following are equivalent:
\begin{enumerate}[(i)]
\item $\mathbb{D}$ is a $\kappa$-filtered category.

\item The functor $\indlim_{\mathbb{D}} : \Func{\mathbb{D}}{\cat{\Set}} \to \cat{\Set}$ preserves limits for all diagrams that are simultaneously $\kappa$-small and $\mbfU$-small.
\end{enumerate}
\end{thm}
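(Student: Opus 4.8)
The plan is to prove the two implications separately. The implication (i) \implies (ii) is the substantial one, and I would attack it through the explicit construction of $\kappa$-filtered colimits in $\cat{\Set}$; the converse (ii) \implies (i) is then a short argument with representable functors.

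For (i) \implies (ii): every $\kappa$-small and $\mbfU$-small limit in $\cat{\Set}$ is the equaliser of a parallel pair of maps between products indexed by the object-set and the morphism-set of the shape category, and these index sets are $\mbfU$-sets of cardinality $< \kappa$; so it suffices to show that $\indlim_{\mathbb D}$ preserves equalisers and preserves $\kappa$-small products indexed by $\mbfU$-sets. Here I would use the familiar formula: for $F : \mathbb D \to \cat{\Set}$, the colimit $\indlim_{\mathbb D} F$ is the quotient of $\coprod_{d \in \ob \mathbb D} F(d)$ — a $\mbfU$-set, because $\mathbb D$ is $\mbfU$-small — by the relation identifying $x \in F(d)$ with $y \in F(e)$ whenever $F(f)(x) = F(g)(y)$ for some pair $f : d \to w$, $g : e \to w$ in $\mathbb D$; the hypothesis that $\mathbb D$ is $\kappa$-filtered (in particular connected) is precisely what makes this an equivalence relation whose quotient computes the colimit. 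One then checks the two comparison maps are bijective by a direct diagram chase: surjectivity amounts to transporting fewer than $\kappa$ chosen representatives to a common object of $\mathbb D$, which is possible because $\mathbb D$ admits a cocone over any family of fewer than $\kappa$ of its objects; injectivity amounts to reconciling fewer than $\kappa$ separate identifications, each witnessed by a span in $\mathbb D$, which one does by first choosing a cocone over the fewer-than-$\kappa$ intermediate objects and then a single morphism of $\mathbb D$ coequalising the resulting fewer-than-$\kappa$ parallel arrows.

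For (ii) \implies (i): by the standard characterisation, $\mathbb D$ is $\kappa$-filtered as soon as it is non-empty, every family of fewer than $\kappa$ of its objects admits a cocone, and every family $\seq{u_\lambda : a \to b}{\lambda \in L}$ of fewer than $\kappa$ parallel morphisms is coequalised by a single morphism $h : b \to c$; and each of these is an instance of the observation that follows. Let $\mathbb J$ be a $\mbfU$-small, $\kappa$-small category and $X : \mathbb J \to \mathbb D$ a diagram; then the presheaf $d \mapsto \set{ \text{cocones } X \hoto \Delta d }$ is the limit $\prolim_{j \in \op{\mathbb J}} \Hom[\mathbb D]{X j}{\blank}$ computed in $\Func{\mathbb D}{\cat{\Set}}$, a limit whose shape $\op{\mathbb J}$ is again $\kappa$-small and $\mbfU$-small; so hypothesis (ii) yields
\begin{align*}
\indlim_{\mathbb D} \prolim_{j \in \op{\mathbb J}} \Hom[\mathbb D]{X j}{\blank}
  &\cong \prolim_{j \in \op{\mathbb J}} \indlim_{\mathbb D} \Hom[\mathbb D]{X j}{\blank} \\
  &\cong \prolim_{j \in \op{\mathbb J}} 1 \;\cong\; 1 ,
\end{align*}
where the middle isomorphism holds because the colimit over $\mathbb D$ of a representable $\Hom[\mathbb D]{c}{\blank}$ is a singleton: its category of elements is the coslice $\commacat{c}{\mathbb D}$, which has the initial object $\tuple{c, \id_c}$ and is therefore connected. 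Since the left-hand colimit is non-empty and a colimit over $\mathbb D$ of a presheaf is a quotient of the coproduct of its values, the presheaf of cocones is non-empty at some object $d$, i.e.\ $X$ has a cocone. Taking $\mathbb J$ empty, then $\mathbb J$ discrete on a subset of $\ob \mathbb D$, then $\mathbb J$ a two-object category joined by fewer than $\kappa$ parallel arrows from $\mor \mathbb D$ — in each case $\mathbb J$ can be realised as a genuinely $\mbfU$-small category, which is why the $\mbfU$-smallness clause in (ii) does not obstruct the argument — recovers the three conditions.

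I expect the main obstacle to be the injectivity half of the product case in (i) \implies (ii): there is no a priori bound on the number or complexity of the identifications that must be synchronised, and it is the regularity of $\kappa$ — a union of fewer than $\kappa$ sets each of cardinality $< \kappa$ is again of cardinality $< \kappa$, so the synchronising diagrams remain $\kappa$-small — that makes the argument go through. This is also, informally, the boundedness property that rules out the universe-dependence of the Waterhouse and Bowler examples. Everything else is routine: the reduction to $\kappa$-small products and equalisers, the equaliser comparison, and the whole of (ii) \implies (i) once the representable-presheaf trick is in hand; the role of $\mbfU$-smallness throughout is only to keep the colimit functor $\indlim_{\mathbb D}$ and the relevant limits inside $\cat{\Set}$ and to keep the product index sets $\mbfU$-sets.
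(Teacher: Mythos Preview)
Your proposal is correct and aligns with the paper's own treatment: the paper does not give a detailed proof, merely remarking that (i) $\Rightarrow$ (ii) is ``very well known'' and that (ii) $\Rightarrow$ (i) is ``an exercise in using the Yoneda lemma and manipulating limits and colimits for diagrams of representable functors'' (citing Gabriel--Ulmer, Satz~5.2), which is exactly the representable-presheaf argument you spell out. One cosmetic slip: in your injectivity discussion the identifications are witnessed by \emph{cospans} $d \to w \leftarrow e$, not spans.
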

\begin{proof} \openproof
The claim (i) \implies (ii) is very well known, and the converse is an exercise in using the Yoneda lemma and manipulating limits and colimits for diagrams of representable functors; see Satz 5.2 in \citep{Gabriel-Ulmer:1971}.
\end{proof}

\begin{dfn}
Let $\kappa$ be a regular cardinal in a universe $\mbfUplus$, let $\mbfU$ be a pre-universe with $\mbfU \subseteq \mbfUplus$, and let $\cat{\Setplus}$ be the category of $\mbfUplus$-sets. A \strong{$\tuple{\kappa, \mbfU}$-compact object} in a locally $\mbfUplus$-small category $\mathcal{C}$ is an object $A$ such that the representable functor $\Hom[\mathcal{C}]{A}{\blank} : \mathcal{C} \to \cat{\Setplus}$ preserves colimits for all $\mbfU$-small $\kappa$-filtered diagrams. 
\end{dfn}

\begin{remark*}
It is more usual to say `$\lambda$-presentable object' instead of `$\lambda$-compact object', especially in algebraic contexts. This is (at least partially) justified by \autoref{prop:compact.objects.in.locally.presentable.categories}.
\end{remark*}

Though the above definition is stated using a pre-universe $\mbfU$ contained in a universe $\mbfUplus$, the following lemma shows there is no dependence on $\mbfUplus$.

\begin{lem}
\label{lem:elementary.condition.for.compactness}
Let $A$ be an object in a locally $\mbfUplus$-small category $\mathcal{C}$. The following are equivalent:
\begin{enumerate}[(i)]
\item $A$ is a $\tuple{\kappa, \mbfU}$-compact object in $\mathcal{C}$.

\item For all $\mbfU$-small $\kappa$-filtered diagrams $B : \mathbb{D} \to \mathcal{C}$, if $\lambda : B \hoto \Delta C$ is a colimiting cocone, then for any morphism $f : A \to C$, there exist an object $i$ in $\mathbb{D}$ and a morphism $f' : A \to B i$ in $\mathcal{C}$ such that $f = \lambda_i \circ f'$; and moreover if $f = \lambda_j \circ f''$ for some morphism $f'' : A \to B j$ in $\mathcal{C}$, then there exists an object $k$ and a pair of arrows $g : i \to k$, $h : i \to k$ in $\mathbb{D}$ such that $B g \circ f' = B h \circ f''$.
\end{enumerate}
\end{lem}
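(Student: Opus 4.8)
The plan is to reduce both conditions to a single statement about a canonical comparison map in $\cat{\Setplus}$, and then to unfold that statement using the explicit construction of filtered colimits of sets.

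Fix a $\mbfU$-small $\kappa$-filtered diagram $B : \mathbb{D} \to \mathcal{C}$ together with a colimiting cocone $\lambda : B \hoto \Delta C$. Since $\mathcal{C}$ is locally $\mbfUplus$-small, applying $\mathrm{Hom}_{\mathcal{C}}(A, \blank)$ produces a $\mbfU$-small diagram $i \mapsto \mathrm{Hom}_{\mathcal{C}}(A, Bi)$ of $\mbfUplus$-sets and a cocone $(\mathrm{Hom}_{\mathcal{C}}(A, \lambda_i))_{i}$ over it with vertex $\mathrm{Hom}_{\mathcal{C}}(A, C)$. By \autoref{thm:foundations:stability.of.limits.and.colimits.of.sets} this diagram has a colimit $L$ in $\cat{\Setplus}$, and I write $\phi : L \to \mathrm{Hom}_{\mathcal{C}}(A, C)$ for the map induced by the cocone above. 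By definition $A$ is $\tuple{\kappa, \mbfU}$-compact precisely when, for every such $B$ and $\lambda$, the cocone $(\mathrm{Hom}_{\mathcal{C}}(A, \lambda_i))_i$ is again colimiting, which holds if and only if $\phi$ is a bijection. Since condition (ii) is likewise a statement quantified over all $\mbfU$-small $\kappa$-filtered $B$, it is enough to show that the displayed clauses of (ii) hold for a given $B, \lambda$ if and only if $\phi$ is a bijection.

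Here one uses the standard description of $L$: because $\mathbb{D}$ is in particular filtered, $L$ may be taken to be the quotient of $\coprod_{i \in \ob \mathbb{D}} \mathrm{Hom}_{\mathcal{C}}(A, Bi)$ by the relation identifying $(i, f')$ with $(j, f'')$ exactly when there are an object $k$ and morphisms $g : i \to k$, $h : j \to k$ in $\mathbb{D}$ with $Bg \circ f' = Bh \circ f''$; under this description $\phi$ sends the class of $(i, f')$ to $\lambda_i \circ f'$. Then $\phi$ is surjective iff every $f : A \to C$ has the form $\lambda_i \circ f'$ for some $i$ and some $f' : A \to Bi$, which is the first clause of (ii); and, granting surjectivity, $\phi$ is injective iff whenever $\lambda_i \circ f' = \lambda_j \circ f''$ there exist $k, g, h$ as above with $Bg \circ f' = Bh \circ f''$, which — noting that $\lambda_i \circ f'$ and $\lambda_j \circ f''$ then name a common $f : A \to C$ — is the second clause of (ii) (the apparent asymmetry between the chosen witness $(i, f')$ and an arbitrary $(j, f'')$ is harmless, since any two representatives of the same $f$ get identified with the chosen witness, hence with each other). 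This yields the claimed equivalence.

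The only delicate points are bookkeeping. First, one must be careful that the colimit is formed in $\cat{\Setplus}$ rather than $\cat{\Set}$, as the hom-sets of $\mathcal{C}$ are only $\mbfUplus$-sets, and that $\coprod_{i} \mathrm{Hom}_{\mathcal{C}}(A, Bi)$ is a $\mbfUplus$-set — this is where $\mbfUplus$-replacement is invoked, since $\ob \mathbb{D}$ is a $\mbfU$-set. Second, the simplified binary-cospan description of the equivalence relation defining $L$ must be justified; this is exactly where filteredness of $\mathbb{D}$ enters, and only plain filteredness is used, so no circularity with the choice of $\kappa$ arises. Beyond these checks I expect the argument to be routine, requiring no idea past the explicit construction of filtered colimits in $\cat{\Set}$.
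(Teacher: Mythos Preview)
Your proposal is correct and follows exactly the approach the paper indicates: the paper's own proof is a one-line pointer to ``the explicit description of $\indlim_\mathbb{D} \Hom[\mathcal{C}]{A}{B}$ as a filtered colimit of sets'', and you have simply unpacked that description in full, reducing bijectivity of the comparison map $\phi$ to the two clauses of (ii). Your care about forming the colimit in $\cat{\Setplus}$, about where filteredness is used to simplify the equivalence relation, and about the asymmetry between the chosen witness and an arbitrary one, are all appropriate and correctly handled.
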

\begin{proof} \openproof
Use the explicit description of $\indlim_\mathbb{D} \Hom[\mathcal{C}]{A}{B}$ as a filtered colimit of sets; see Definition 1.1 in \citep{LPAC} or Proposition 5.1.3 in \citep{Borceux:1994b}.
\end{proof}

\begin{cor}
\label{cor:compact.objects.are.retracts}
Let $B : \mathbb{D} \to \mathcal{C}$ be a $\mbfU$-small $\kappa$-filtered diagram, and let $\lambda : B \hoto \Delta C$ be a colimiting cocone in $\mathcal{C}$. If $C$ is a $\tuple{\kappa, \mbfU}$-compact object in $\mathcal{C}$, then for some object $i$ in $\mathbb{D}$, $\lambda_i : B i \to C$ is a \emph{split} epimorphism. \qed
\end{cor}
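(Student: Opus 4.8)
The plan is to apply \autoref{lem:elementary.condition.for.compactness} directly, taking the compact object in question to be $C$ itself and the test morphism to be the identity $\id_C : C \to C$.

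First I would note that $\mathcal{C}$ is locally $\mbfUplus$-small (this is implicit in the hypothesis that $C$ is a $\tuple{\kappa, \mbfU}$-compact object), so the lemma applies with $A = C$. Since $B : \mathbb{D} \to \mathcal{C}$ is a $\mbfU$-small $\kappa$-filtered diagram and $\lambda : B \hoto \Delta C$ is a colimiting cocone, condition (ii) of the lemma — instantiated at the morphism $f = \id_C$ — yields an object $i$ in $\mathbb{D}$ together with a morphism $f' : C \to B i$ in $\mathcal{C}$ such that $\id_C = \lambda_i \circ f'$. This exhibits $f'$ as a section of $\lambda_i$, so $\lambda_i : B i \to C$ is a split epimorphism, which is exactly the assertion.

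There is no real obstacle here: the statement is an immediate specialisation of the lemma, obtained by feeding it $A = C$ and $f = \id_C$. The only point worth recording is that the ``moreover'' clause of \autoref{lem:elementary.condition.for.compactness}(ii) — the essential uniqueness of the factorisation — is not needed; only the \emph{existence} of a factorisation of $\id_C$ through some $\lambda_i$ is used.
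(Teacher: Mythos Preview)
Your proof is correct and is exactly the intended argument: the paper marks this corollary with a bare \qedsymbol, indicating it follows immediately from \autoref{lem:elementary.condition.for.compactness}, and your instantiation $A = C$, $f = \id_C$ is precisely how that immediate deduction goes. Your observation that the ``moreover'' clause is unused is also accurate.
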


\begin{lem}
\label{lem:compactness.rank}
Let $A$ be an object in a category $\mathcal{C}$.
\begin{enumerate}[(i)]
\item If $\mbfU$ is a pre-universe contained in a universe $\mbfUplus$ and $\kappa$ is a regular cardinal such that $A$ is $\tuple{\kappa, \mbfUplus}$-compact, then $A$ is $\tuple{\kappa, \mbfU}$-compact as well.

\item If $\kappa$ is a regular cardinal such that $A$ is $\tuple{\kappa, \mbfU}$-compact and $\lambda$ is any regular cardinal such that $\kappa \le \lambda$, then $A$ is also $\tuple{\lambda, \mbfU}$-compact.
\end{enumerate}
\end{lem}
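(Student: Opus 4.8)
The plan is to reduce both statements to two elementary containments between classes of diagrams, together with the triviality that a functor preserving colimits for every diagram in a class also preserves them for every diagram in any subclass. First I would record these containments. For (i): since $\mbfU \subseteq \mbfUplus$, every $\mbfU$-set is a $\mbfUplus$-set, so every $\mbfU$-small category is $\mbfUplus$-small; in particular, every $\mbfU$-small $\kappa$-filtered diagram in $\mathcal{C}$ is a $\mbfUplus$-small $\kappa$-filtered diagram. For (ii): since $\kappa \le \lambda$, every $\kappa$-small category is $\lambda$-small, so every $\lambda$-filtered category is $\kappa$-filtered; hence every $\mbfU$-small $\lambda$-filtered diagram is a $\mbfU$-small $\kappa$-filtered diagram.

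Next I would unwind the definition: $A$ is $\tuple{\kappa, \mbfU}$-compact exactly when $\Hom[\mathcal{C}]{A}{\blank}$ sends each colimiting cocone over a $\mbfU$-small $\kappa$-filtered diagram to a colimiting cocone of sets. In case (i), the hypothesis that $A$ is $\tuple{\kappa, \mbfUplus}$-compact supplies this for all $\mbfUplus$-small $\kappa$-filtered diagrams, and by the first containment the $\mbfU$-small ones form a subclass, so $A$ is $\tuple{\kappa, \mbfU}$-compact. In case (ii), the hypothesis that $A$ is $\tuple{\kappa, \mbfU}$-compact supplies it for all $\mbfU$-small $\kappa$-filtered diagrams, and by the second containment the $\mbfU$-small $\lambda$-filtered ones form a subclass, so $A$ is $\tuple{\lambda, \mbfU}$-compact.

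The only subtlety — and the closest thing to an obstacle — is that $\tuple{\kappa, \mbfU}$-compactness and $\tuple{\kappa, \mbfUplus}$-compactness are nominally defined relative to possibly different ambient universes (the definition asks $\mathcal{C}$ to be locally small with respect to some universe containing the size parameter), so the two representable functors involved need not be literally the same, and the naive subclass argument deserves a word of justification. I would handle this by working with \autoref{lem:elementary.condition.for.compactness} instead, whose condition (ii) characterises $\tuple{\kappa, \mbfU}$-compactness using only $\mathcal{C}$, $A$, $\kappa$ and the class of $\mbfU$-small $\kappa$-filtered diagrams, with no reference to any ambient universe; feeding the two containments above into that elementary characterisation then yields (i) and (ii) at once. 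I expect no genuine difficulty: this is a bookkeeping lemma pinning down the monotonicity of compactness in each of its two parameters.
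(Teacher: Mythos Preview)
Your argument is correct and is exactly the unpacking of what the paper records merely as ``Obvious'': the two containments of diagram classes you identify, together with \autoref{lem:elementary.condition.for.compactness} to make the notion universe-independent, are precisely why the statement is immediate. There is nothing to add.
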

\begin{proof} \obviousproof
Obvious.
\end{proof}

\begin{lem}
\label{lem:small.objects}
Let $\lambda$ be a regular cardinal in a universe $\mbfUplus$, and let $\mbfU$ be a pre-universe with $\mbfU \subseteq \mbfUplus$. If $B : \mathbb{D} \to \mathcal{C}$ is a $\lambda$-small diagram of $\tuple{\lambda, \mbfU}$-compact objects in a locally $\mbfUplus$-small category, then the colimit $\indlim_\mathbb{D} B$, if it exists, is a $\tuple{\lambda, \mbfU}$-compact object in $\mathcal{C}$.
\end{lem}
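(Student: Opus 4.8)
The plan is to reduce the statement to the fact --- contained in \autoref{thm:filtered.colimits.in.set} --- that $\mbfU$-small $\lambda$-filtered colimits commute with $\lambda$-small limits in a category of sets. Suppose the colimit $L = \indlim_{\mathbb{D}} B$ exists, and for each object $d$ of $\mathbb{D}$ abbreviate $H_d = \Hom[\mathcal{C}]{B d}{\blank} : \mathcal{C} \to \cat{\Setplus}$; since $B d$ is $\tuple{\lambda, \mbfU}$-compact, each $H_d$ preserves the colimit of any $\mbfU$-small $\lambda$-filtered diagram. The universal property of $L$ supplies a natural isomorphism $\Hom[\mathcal{C}]{L}{\blank} \cong \prolim_{d} H_d$, where the right-hand side is the limit, computed pointwise in $\Func{\mathcal{C}}{\cat{\Setplus}}$, of the $\op{\mathbb{D}}$-shaped diagram $d \mapsto H_d$. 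Since $\card{\mor \op{\mathbb{D}}} < \lambda$, the shape $\op{\mathbb{D}}$ is $\lambda$-small; and as $\lambda \in \mbfUplus$, we may also take $\op{\mathbb{D}}$ to be $\mbfUplus$-small, replacing $\mathbb{D}$ if necessary by an isomorphic category whose underlying sets are $\mbfUplus$-sets (this affects neither the hypotheses nor the conclusion).

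Now fix an arbitrary $\mbfU$-small $\lambda$-filtered diagram $E : \mathbb{E} \to \mathcal{C}$ whose colimit $C = \indlim_{\mathbb{E}} E$ exists in $\mathcal{C}$; since $\mbfU \subseteq \mbfUplus$, the category $\mathbb{E}$ is itself $\mbfUplus$-small. Applying \autoref{thm:filtered.colimits.in.set} to the category $\cat{\Setplus}$ of $\mbfUplus$-sets, with the regular cardinal $\lambda$ and the $\lambda$-filtered category $\mathbb{E}$, the functor $\indlim_{\mathbb{E}} : \Func{\mathbb{E}}{\cat{\Setplus}} \to \cat{\Setplus}$ preserves limits of every diagram that is simultaneously $\lambda$-small and $\mbfUplus$-small, hence in particular of every $\op{\mathbb{D}}$-shaped diagram. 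Chaining this with the colimit-preservation of the $H_d$ gives natural isomorphisms
\[
\Hom[\mathcal{C}]{L}{C}
  \cong \prolim_{d} H_d C
  \cong \prolim_{d} \indlim_{\mathbb{E}} \parens{H_d \circ E}
  \cong \indlim_{\mathbb{E}} \prolim_{d} \parens{H_d \circ E}
  \cong \indlim_{\mathbb{E}} \parens{\Hom[\mathcal{C}]{L}{\blank} \circ E} ,
\]
and a routine check identifies the composite with the canonical comparison morphism. Hence $\Hom[\mathcal{C}]{L}{\blank}$ preserves the colimit $C$; as $E$ was arbitrary, $L$ is $\tuple{\lambda, \mbfU}$-compact.

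I expect the only real friction to be the size bookkeeping: confirming that $\op{\mathbb{D}}$ may be taken $\mbfUplus$-small and that a $\mbfU$-small $\lambda$-filtered category is in particular $\mbfUplus$-small, so that \autoref{thm:filtered.colimits.in.set} applies over the universe $\mbfUplus$ with the regular cardinal $\lambda$; past this, no idea is needed beyond that theorem and the definition of $\tuple{\lambda, \mbfU}$-compactness. A more hands-on alternative is to work directly from the elementary criterion of \autoref{lem:elementary.condition.for.compactness}: given a colimiting cocone $\mu : B \hoto \Delta L$ and the colimiting cocone of a test diagram $E : \mathbb{E} \to \mathcal{C}$, factor each composite $f \circ \mu_d$ through the test cocone, lift these factorisations to a single object of $\mathbb{E}$ and make them compatible with the morphisms of $\mathbb{D}$ --- possible because $\mathbb{D}$ has fewer than $\lambda$ objects and morphisms and $\mathbb{E}$ is $\lambda$-filtered --- so as to obtain a cocone on $B$, and then conclude via the universal property of $L$; the uniqueness clause of \autoref{lem:elementary.condition.for.compactness} is handled in the same way.
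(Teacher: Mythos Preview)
Your argument is correct and is precisely the approach the paper takes: the paper's one-line proof (``Use \autoref{thm:filtered.colimits.in.set} and the fact that $\Hom[\mathcal{C}]{\blank}{C}$ maps colimits to limits'') is exactly what you have unpacked, including the size bookkeeping needed to apply \autoref{thm:filtered.colimits.in.set} over $\mbfUplus$. The alternative hands-on route via \autoref{lem:elementary.condition.for.compactness} that you sketch at the end is also valid but is not the argument the paper has in mind.
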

\begin{proof}
Use \autoref{thm:filtered.colimits.in.set} and the fact that $\Hom[\mathcal{C}]{\blank}{C} : \op{\mathcal{C}} \to \cat{\Setplus}$ maps colimits in $\mathcal{C}$ to limits in $\cat{\Setplus}$.
\end{proof}

\begin{cor}
\label{cor:retracts.of.compact.objects}
A retract of a $\tuple{\lambda, \mbfU}$-compact object is also a $\tuple{\lambda, \mbfU}$-compact object.
\end{cor}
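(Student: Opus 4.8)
The plan is to realise the retract as the colimit of a finite diagram built from the object it retracts off, and then invoke \autoref{lem:small.objects}. So suppose $B$ is a retract of a $\tuple{\lambda, \mbfU}$-compact object $A$ in a locally $\mbfUplus$-small category $\mathcal{C}$, witnessed by morphisms $r : A \to B$ and $s : B \to A$ with $r \circ s = \id_B$, and put $e = s \circ r : A \to A$, an idempotent.

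The one step that needs an argument is the identification of $B$ with a colimit. I would check directly that $r : A \to B$ is a coequaliser of the parallel pair $\id_A, e : A \to A$ in $\mathcal{C}$: it is a cocone since $r \circ e = r \circ s \circ r = r$, and it is universal since any $f : A \to X$ with $f \circ e = f$ factors through $r$ as $f \circ s$, the factorisation being unique because $g \circ r = f$ forces $g = g \circ r \circ s = f \circ s$. Hence $B$ is a colimit of a diagram indexed by the finite category $\bullet \rightrightarrows \bullet$, both of whose vertices are $A$; this colimit exists (it is $B$), and its vertices are $\tuple{\lambda, \mbfU}$-compact.

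Since $\lambda$ is a regular cardinal it is infinite, so the indexing category $\bullet \rightrightarrows \bullet$ is $\lambda$-small, and \autoref{lem:small.objects} at once yields that $B$ is $\tuple{\lambda, \mbfU}$-compact. I do not expect any genuine obstacle here: all the substance is carried by \autoref{lem:small.objects}, and the remaining ingredient — that a retract is the coequaliser of an identity with an idempotent — is the elementary diagram chase above. (One could equally present $B$ as the colimit of the one-object free-idempotent category, which is also finite and hence $\lambda$-small, so the precise choice of diagram is immaterial.)
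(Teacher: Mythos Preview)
Your proposal is correct and follows essentially the same route as the paper: both exhibit $B$ as the coequaliser of $\id_A$ and the idempotent $e = s \circ r$, and then invoke \autoref{lem:small.objects} on this finite (hence $\lambda$-small) diagram. The paper phrases it slightly more tersely by observing that the diagram is a \emph{split} coequaliser (so the verification of the universal property is automatic), whereas you check universality by hand; but the content is identical.
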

\begin{proof}
Suppose $r : A \to B$ and $s : B \to A$ are morphisms in $\mathcal{C}$ such that $r \circ s = \id_B$. Then $e = s \circ r$ is an idempotent morphism and the diagram below
\[
\begin{tikzcd}
A
\rar[transform canvas={yshift=0.75ex}]{\id_A} 
\rar[transform canvas={yshift=-0.75ex}, swap]{e} &
A
\rar{r} &
B
\end{tikzcd}
\]
is a (split) coequaliser diagram in $\mathcal{C}$, so $B$ is $\tuple{\lambda, \mbfU}$-compact if $A$ is.
\end{proof}
%
%
%
%

\begin{dfn}
Let $\kappa$ be a regular cardinal in a universe $\mbfU$. A \strong{$\kappa$-accessible $\mbfU$-category} is a locally $\mbfU$-small category $\mathcal{C}$ satisfying the following conditions:
\begin{itemize}
\item $\mathcal{C}$ has colimits for all $\mbfU$-small $\kappa$-filtered diagrams.

\item There exists a $\mbfU$-set $\mathcal{G}$ such that every object in $\mathcal{G}$ is $\tuple{\kappa, \mbfU}$-compact and, for every object $B$ in $\mathcal{C}$, there exists a $\mbfU$-small $\kappa$-filtered diagram of objects in $\mathcal{G}$ with $B$ as its colimit in $\mathcal{C}$.
\end{itemize}
We write $\Kompakt[\kappa][\mbfU]{\mathcal{C}}$ for the full subcategory of $\mathcal{C}$ spanned by the $\tuple{\kappa, \mbfU}$-compact objects.
\end{dfn}

\begin{remark}
\Autoref{lem:small.objects} implies that, for each object $A$ in an accessible $\mbfU$-category, there exists a regular cardinal $\lambda$ in $\mbfU$ such that $A$ is $\tuple{\lambda, \mbfU}$-compact.
\end{remark}

\begin{thm}
\label{thm:ind-completion}
Let $\mathcal{C}$ be a locally $\mbfU$-small category, and let $\kappa$ be a regular cardinal in $\mbfU$. There exist a locally $\mbfU$-small category $\Ind[\kappa][\mbfU]{\mathcal{C}}$ and a functor $\gamma : \mathcal{C} \to \Ind[\kappa][\mbfU]{\mathcal{C}}$ with the following properties:
\begin{enumerate}[(i)]
\item The objects of $\Ind[\kappa][\mbfU]{\mathcal{C}}$ are $\mbfU$-small $\kappa$-filtered diagrams $B : \mathbb{D} \to \mathcal{C}$, and $\gamma$ sends an object $C$ in $\mathcal{C}$ to the diagram $\mathbf{1} \to \mathcal{C}$ with value $C$.

\item The functor $\gamma : \mathcal{C} \to \Ind[\kappa][\mbfU]{\mathcal{C}}$ is fully faithful, injective on objects, preserves all limits that exist in $\mathcal{C}$, and preserves all $\kappa$-small colimits that exist in $\mathcal{C}$.

\item $\Ind[\kappa][\mbfU]{\mathcal{C}}$ has colimits for all $\mbfU$-small $\kappa$-filtered diagrams.

\item For every object $C$ in $\mathcal{C}$, the object $\gamma C$ is $\tuple{\kappa, \mbfU}$-compact in $\Ind[\kappa][\mbfU]{\mathcal{C}}$, and for each $\mbfU$-small $\kappa$-filtered diagram $B : \mathbb{D} \to \mathcal{C}$, there is a canonical colimiting cocone $\gamma B \hoto \Delta B$ in $\Ind[\kappa][\mbfU]{\mathcal{C}}$.

\item If $\mathcal{D}$ is a category with colimits for all $\mbfU$-small $\kappa$-filtered diagrams, then for each functor $F : \mathcal{C} \to \mathcal{D}$, there exists a functor $\bar{F} : \Ind[\kappa][\mbfU]{\mathcal{C}} \to \mathcal{D}$ that preserves colimits for all $\mbfU$-small $\kappa$-filtered diagrams in $\Ind[\kappa][\mbfU]{\mathcal{C}}$ such that $\gamma \bar{F} = F$, and given any functor $\bar{G} : \Ind[\kappa][\mbfU]{\mathcal{C}} \to \mathcal{D}$ whatsoever, the induced map $\Nat{\bar{F}}{\bar{G}} \to \Nat{F}{\gamma \bar{G}}$ is a bijection.
\end{enumerate}
The category $\Ind[\kappa][\mbfU]{\mathcal{C}}$ is called the \strong{free $\tuple{\kappa, \mbfU}$-ind-completion} of $\mathcal{C}$, or the \strong{category of $\tuple{\kappa, \mbfU}$-ind-objects} in $\mathcal{C}$.
\end{thm}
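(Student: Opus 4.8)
The plan is to realise $\Ind[\kappa][\mbfU]{\mathcal C}$ by means of the presheaf category $\mathcal P := \Func{\op{\mathcal C}}{\cat{\Set^+}}$, where $\cat{\Set^+}$ is the category of $\mbfUplus$-sets. Since $\ob \mathcal C$ and $\mor \mathcal C$ are $\mbfU$-classes and $\mbfU \in \mbfUplus$, the category $\mathcal P$ is locally $\mbfUplus$-small, has all $\mbfUplus$-small limits and colimits computed pointwise, and carries a Yoneda embedding $y : \mathcal C \to \mathcal P$ that is fully faithful, preserves all limits, and has the property that $\Hom[\mathcal P]{y C}{\blank}$ is evaluation at $C$ and so preserves \emph{all} colimits. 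For a $\mbfU$-small $\kappa$-filtered diagram $B : \mathbb D \to \mathcal C$ write $L B := \indlim_{i \in \mathbb D} y\parens{B i} \in \mathcal P$ (the category $\mathbb D$ being nonempty since $\kappa$-filtered). I would then \emph{define} $\Ind[\kappa][\mbfU]{\mathcal C}$ to have as objects the $\mbfU$-small $\kappa$-filtered diagrams $B : \mathbb D \to \mathcal C$ and as morphisms $B \to B'$ the elements of $\Hom[\mathcal P]{L B}{L B'}$, with composition and identities inherited from $\mathcal P$; and take $\gamma$ to send $C$ to the one-object diagram $\mathbf 1 \to \mathcal C$ with value $C$ (the terminal category $\mathbf 1$ being $\kappa$-filtered), so that $L\parens{\gamma C} = y C$. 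With these definitions, (i) holds by construction; $\gamma$ is fully faithful by the Yoneda lemma, since $\Hom[\mathcal P]{L \gamma C}{L \gamma C'} = \Hom[\mathcal P]{y C}{y C'} = \Hom[\mathcal C]{C}{C'}$, and injective on objects since distinct objects give distinct one-object diagrams; these are the first two clauses of (ii). The remaining point about the construction itself is that $\Ind[\kappa][\mbfU]{\mathcal C}$ is \emph{locally $\mbfU$-small}: using the colimit-preservation property of $\Hom[\mathcal P]{y\parens{B i}}{\blank}$ and the $\kappa$-filteredness of $\mathbb D'$,
\[
\Hom[\mathcal P]{L B}{L B'} \cong \prolim_{i \in \mathbb D} \Hom[\mathcal P]{y\parens{B i}}{L B'} \cong \prolim_{i \in \mathbb D} \indlim_{j \in \mathbb D'} \Hom[\mathcal C]{B i}{B' j} ,
\]
which is a $\mbfU$-small limit of $\mbfU$-small colimits of $\mbfU$-sets, hence a $\mbfU$-set.

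For the preservation clauses of (ii): the inclusion $\Ind[\kappa][\mbfU]{\mathcal C} \embedinto \mathcal P$ is full and faithful, and $y$ preserves all limits, so a limit in $\mathcal C$ of a diagram $D$ gives a limit $y\parens{\prolim D} \cong \prolim(y \circ D)$ in $\mathcal P$ that is again representable, lies in the subcategory, and is therefore the limit in $\Ind[\kappa][\mbfU]{\mathcal C}$ — whence $\gamma$ preserves every limit existing in $\mathcal C$. For $\kappa$-small colimits: each $L B'$ is a $\mbfU$-small $\kappa$-filtered colimit of representables, so by \autoref{thm:filtered.colimits.in.set} it carries any $\kappa$-small (hence, as $\kappa \in \mbfU$, also $\mbfU$-small) colimit that exists in $\mathcal C$ to a limit in $\cat{\Set^+}$; pairing this with the displayed formula as $B'$ ranges over all objects shows $\gamma$ preserves such colimits. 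For (iii) and (iv), the key is the standard fact that the full subcategory of $\mathcal P$ on presheaves expressible as $\mbfU$-small $\kappa$-filtered colimits of representables is closed under $\mbfU$-small $\kappa$-filtered colimits — equivalently, an iterated $\mbfU$-small $\kappa$-filtered colimit may be rewritten as a single one, which one proves by a cofinality argument (assembling, using choice, the varying inner diagrams into one diagram indexed by a $\mbfU$-small $\kappa$-filtered category, \eg a suitable category of elements). Closure gives (iii); and since $y C$ is $\tuple{\kappa, \mbfU}$-compact in $\mathcal P$ (indeed $\Hom[\mathcal P]{y C}{\blank}$ preserves all colimits) and $L B = \indlim_i y\parens{B i}$ \emph{by definition}, via the cocone whose legs are the colimit injections, the preservation and reflection of $\mbfU$-small $\kappa$-filtered colimits by the inclusion yield (iv): $\gamma C$ is $\tuple{\kappa, \mbfU}$-compact in $\Ind[\kappa][\mbfU]{\mathcal C}$, and $\gamma B \hoto \Delta B$ is colimiting.

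It remains to establish the universal property (v). Set $\bar F B := \indlim_{i \in \mathbb D} F\parens{B i}$ on objects, which exists because $\mathcal D$ has $\mbfU$-small $\kappa$-filtered colimits, and define $\bar F$ on morphisms by the composite
\[
\begin{aligned}
\prolim_{i} \indlim_{j} \Hom[\mathcal C]{B i}{B' j} &\xrightarrow{\,F_{*}\,} \prolim_{i} \indlim_{j} \Hom[\mathcal D]{F\parens{B i}}{F\parens{B' j}} \\
&\longrightarrow \prolim_{i} \Hom[\mathcal D]{F\parens{B i}}{\bar F B'} \cong \Hom[\mathcal D]{\bar F B}{\bar F B'} ,
\end{aligned}
\]
where the second arrow is induced by the colimiting cocone defining $\bar F B'$ and the last isomorphism is the universal property of $\bar F B = \indlim_i F\parens{B i}$. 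Identities are evidently preserved, while compatibility with composition is a diagram chase that uses $\kappa$-filteredness of the index categories to pass to common representatives. Then $\gamma \bar F = F$ holds since $\bar F$ applied to the one-object diagram at $C$ is $\indlim_{\mathbf 1} F C = F C$, and $\bar F$ preserves $\mbfU$-small $\kappa$-filtered colimits by the same cofinality argument as in (iii). Finally, for the bijection $\Nat{\bar F}{\bar G} \to \Nat{F}{\gamma \bar G}$ given by restriction along $\gamma$: injectivity holds because every object $B$ is the colimit of $i \mapsto \gamma\parens{B i}$ and $\bar F$ preserves this colimit, so a natural transformation out of $\bar F$ is determined by its components at objects in the image of $\gamma$; surjectivity holds by extending a given $\beta : F \hoto \gamma\bar G$ to the transformation $\alpha$ whose component $\alpha_B : \bar F B = \indlim_i F\parens{B i} \to \bar G B$ is the map out of the colimit with $i$-th leg $F\parens{B i} = \bar F\parens{\gamma\parens{B i}} \xrightarrow{\beta_{B i}} \bar G\parens{\gamma\parens{B i}} \to \bar G B$ — the last arrow being $\bar G$ applied to the canonical cocone leg — and checking, routinely and using the compactness of the $\gamma\parens{B i}$, that this defines a cocone and that $\alpha$ is natural and restricts to $\beta$.

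The main obstacle is twofold, both parts being matters of size-bookkeeping and coherence rather than anything deep. First, one must resist working in $\Func{\op{\mathcal C}}{\cat{\Set}}$ — presheaves valued in $\mbfU$-sets — which is \emph{not} locally $\mbfU$-small when $\mathcal C$ has a proper $\mbfU$-class of objects; the point is precisely to work in $\mathcal P = \Func{\op{\mathcal C}}{\cat{\Set^+}}$ and then verify \emph{a posteriori}, via the $\prolim\indlim$-formula and \autoref{thm:filtered.colimits.in.set}, that the hom-sets of $\Ind[\kappa][\mbfU]{\mathcal C}$ are $\mbfU$-small. Second, the verifications that $\bar F$ respects composition and that the extension $\alpha$ in (v) is natural require unwinding the comparison maps between $\indlim \Hom[\mathcal C]{\blank}{\blank}$ and $\Hom$-into-a-colimit and then invoking $\kappa$-filteredness to refine the (at most $\kappa$-few) chosen representatives to a common one; this is the one genuinely fiddly computation, and everything else is bookkeeping around the presheaf model.
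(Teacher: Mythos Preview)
The paper does not give a self-contained proof of this theorem: it simply cites Corollary~6.4.14 in Borceux and Theorem~2.26 in \emph{LPAC}, noting only that the preservation of $\kappa$-small colimits by $\gamma$ ``essentially follows from'' \autoref{thm:filtered.colimits.in.set}. Your proposal is a correct and detailed unfolding of precisely the standard presheaf-model construction contained in those references, and your derivation of the $\kappa$-small-colimit preservation via the commutation of $\kappa$-filtered colimits with $\kappa$-small limits in $\cat{\Set^+}$ is exactly the mechanism the paper is alluding to.

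One small remark: your careful use of $\mathcal P = \Func{\op{\mathcal C}}{\cat{\Set^+}}$ rather than $\Func{\op{\mathcal C}}{\cat{\Set}}$, together with the \emph{a posteriori} verification via the $\prolim\indlim$-formula that the hom-sets of $\Ind[\kappa][\mbfU]{\mathcal C}$ land back in $\mbfU$, is precisely the kind of size-bookkeeping that the paper's universe-conscious framework is designed to make explicit, even though the cited references tend to gloss over it. So while there is no methodological difference to speak of, your write-up is more in the spirit of the paper than the paper's own (outsourced) proof.
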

\begin{proof} \openproof
See Corollary 6.4.14 in \citep{Borceux:1994a} and Theorem 2.26 in \citep{LPAC}; note that the fact that $\gamma$ preserves colimits for $\kappa$-small diagrams essentially follows from \autoref{thm:filtered.colimits.in.set}.
\end{proof}

\begin{prop}
\label{prop:compact.objects.in.accessible.categories}
\needspace{3\baselineskip}
Let $\mathbb{B}$ be a $\mbfU$-small category and let $\kappa$ be a regular cardinal in $\mbfU$.
\begin{enumerate}[(i)]
\item $\Ind[\kappa][\mbfU]{\mathbb{B}}$ is a $\kappa$-accessible $\mbfU$-category.

\item Every $\tuple{\kappa, \mbfU}$-compact object in $\Ind[\kappa][\mbfU]{\mathbb{B}}$ is a retract of an object of the form $\gamma B$, where $\gamma : \mathbb{B} \to \Ind[\kappa][\mbfU]{\mathbb{B}}$ is the canonical embedding.

\item $\Kompakt[\kappa][\mbfU]{\Ind[\kappa][\mbfU]{\mathbb{B}}}$ is an essentially $\mbfU$-small category.
\end{enumerate}
\end{prop}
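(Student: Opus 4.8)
The plan is to read all three parts off \autoref{thm:ind-completion}, combined with the corollaries on $\tuple{\kappa, \mbfU}$-compact objects; there is essentially no analytic content, only some care with set-theoretic bookkeeping. For (i) I would first note that $\Ind[\kappa][\mbfU]{\mathbb{B}}$ is locally $\mbfU$-small and has colimits for all $\mbfU$-small $\kappa$-filtered diagrams directly by \autoref{thm:ind-completion}, so that only a $\mbfU$-set of $\tuple{\kappa,\mbfU}$-compact generators remains to be produced. I would take $\mathcal{G}$ to be the object part of the image of $\gamma : \mathbb{B} \to \Ind[\kappa][\mbfU]{\mathbb{B}}$: since $\ob \mathbb{B}$ is a $\mbfU$-set and $\gamma$ is injective on objects, $\mathcal{G}$ is a $\mbfU$-set by $\mbfU$-replacement (modulo the harmless encoding question of why each $\gamma B$ is itself a $\mbfU$-set, exactly as in the proof that $\Func{\mathbb{D}}{\mathcal{C}}$ is locally $\mbfU$-small); each $\gamma B$ is $\tuple{\kappa,\mbfU}$-compact by \autoref{thm:ind-completion}~(iv); and, again by (iv), any object of $\Ind[\kappa][\mbfU]{\mathbb{B}}$ — being a $\mbfU$-small $\kappa$-filtered diagram $B : \mathbb{D} \to \mathbb{B}$ — is the colimit of the $\mbfU$-small $\kappa$-filtered diagram $\gamma \circ B : \mathbb{D} \to \Ind[\kappa][\mbfU]{\mathbb{B}}$, whose values all lie in $\mathcal{G}$. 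That is exactly what it means for $\Ind[\kappa][\mbfU]{\mathbb{B}}$ to be a $\kappa$-accessible $\mbfU$-category.

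For (ii) I would take a $\tuple{\kappa,\mbfU}$-compact object $X$, regard it as a $\mbfU$-small $\kappa$-filtered diagram $B : \mathbb{D} \to \mathbb{B}$, and apply \autoref{cor:compact.objects.are.retracts} to the canonical colimiting cocone $\gamma \circ B \hoto \Delta X$ supplied by \autoref{thm:ind-completion}~(iv): it produces an object $i$ of $\mathbb{D}$ for which $\gamma(B i) \to X$ is a split epimorphism, and a split epimorphism exhibits $X$ as a retract of $\gamma(B i)$ with $B i \in \ob \mathbb{B}$, as required. (By \autoref{cor:retracts.of.compact.objects} the converse also holds, so the $\tuple{\kappa,\mbfU}$-compact objects are precisely the retracts of objects $\gamma B$, but that direction is not needed below.)

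For (iii) I would argue that $\Kompakt[\kappa][\mbfU]{\Ind[\kappa][\mbfU]{\mathbb{B}}}$ has at most a $\mbfU$-set of isomorphism classes of objects and then pass to a skeleton. Concretely, if $X$ is $\tuple{\kappa,\mbfU}$-compact then by (ii) there are $s : X \to \gamma B$ and $r : \gamma B \to X$ with $r \circ s = \id_X$; the idempotent $s \circ r$ on $\gamma B$ equals $\gamma e$ for a unique idempotent $e$ in $\mathbb{B}$ (full faithfulness of $\gamma$), and $X$ is then a splitting of $\gamma e$. Since splittings are unique up to isomorphism, choosing such data for one representative of each isomorphism class injects the isomorphism classes of objects of $\Kompakt[\kappa][\mbfU]{\Ind[\kappa][\mbfU]{\mathbb{B}}}$ into the $\mbfU$-set of idempotent endomorphisms in $\mathbb{B}$ (a subset of $\mor \mathbb{B}$); picking one representative object in each isomorphism class (using choice) yields a $\mbfU$-set of objects, the full subcategory of $\Ind[\kappa][\mbfU]{\mathbb{B}}$ it spans is locally $\mbfU$-small with a $\mbfU$-set of objects, hence $\mbfU$-small, and its inclusion into $\Kompakt[\kappa][\mbfU]{\Ind[\kappa][\mbfU]{\mathbb{B}}}$ is fully faithful and essentially surjective. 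There is no genuine obstacle anywhere in this argument; the only points that need attention are set-theoretic, and they are mild — the two substantive invocations are of $\mbfU$-replacement (for $\mathcal{G}$ in (i) and the set of idempotents in (iii), together with the standard identification of the data of a one-object diagram with a member of $\mbfU$) and of the axiom of choice (to pick representatives in (iii)). I would stress that (iii) does \emph{not} require knowing that $\Ind[\kappa][\mbfU]{\mathbb{B}}$ is idempotent-complete — which for uncountable $\kappa$ would itself take an argument — since the bound on isomorphism classes of compact objects coming from (ii) already suffices.
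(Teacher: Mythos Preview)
Your proposal is correct and follows essentially the same approach as the paper: part (i) is read off the properties listed in \autoref{thm:ind-completion}, part (ii) is \autoref{cor:compact.objects.are.retracts} applied to the canonical colimiting cocone of \autoref{thm:ind-completion}~(iv), and part (iii) is deduced from (ii) together with the $\mbfU$-smallness of $\mathbb{B}$ and local $\mbfU$-smallness of $\Ind[\kappa][\mbfU]{\mathbb{B}}$. The only difference is that for (iii) you spell out an explicit injection of isomorphism classes into the idempotents of $\mathbb{B}$, whereas the paper simply observes that (ii) bounds the compact objects by retracts of the $\mbfU$-set of $\gamma B$'s in a locally $\mbfU$-small category; your version is a harmless elaboration of the same idea.
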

\begin{proof}
(i). This claim more-or-less follows from the properties of $\Ind[\kappa][\mbfU]{\mathbb{B}}$ explained in the previous theorem.

\bigskip\noindent
(ii). Use \autoref{cor:retracts.of.compact.objects}.

\bigskip\noindent
(iii). Since $\mathbb{B}$ is $\mbfU$-small and $\Ind[\kappa][\mbfU]{\mathbb{B}}$ is locally $\mbfU$-small, claim (ii) implies that $\Kompakt[\kappa][\mbfU]{\Ind[\kappa][\mbfU]{\mathbb{B}}}$ must be essentially $\mbfU$-small.
\end{proof}

\begin{dfn}
\label{dfn:accessible.functor}
Let $\kappa$ be a regular cardinal in a universe $\mbfU$. A \strong{$\tuple{\kappa, \mbfU}$-\allowhyphens accessible functor} is a functor $F : \mathcal{C} \to \mathcal{D}$ such that
\begin{itemize}
\item $\mathcal{C}$ is a $\kappa$-accessible $\mbfU$-category, and

\item $F$ preserves all colimits for $\mbfU$-small $\kappa$-filtered diagrams.
\end{itemize}
An \strong{accessible functor} is a functor that is $\tuple{\kappa, \mbfU}$-accessible functor for some regular cardinal $\kappa$ in some universe $\mbfU$.
\end{dfn}

\begin{thm}[Classification of accessible categories]
\label{thm:classification.of.accessible.categories}
Let $\kappa$ be a regular cardinal in a universe $\mbfU$, and let $\mathcal{C}$ be a locally $\mbfU$-small category. The following are equivalent:
\begin{enumerate}[(i)]
\item $\mathcal{C}$ is a $\kappa$-accessible $\mbfU$-category.

\item The inclusion $\Kompakt[\kappa][\mbfU]{\mathcal{C}} \embedinto \mathcal{C}$ extends along the embedding $\gamma : \mathcal{C} \to \Ind[\kappa][\mbfU]{\mathcal{C}}$ to a $\tuple{\kappa, \mbfU}$-accessible functor $\Ind[\kappa][\mbfU]{\Kompakt[\kappa][\mbfU]{\mathcal{C}}} \to \mathcal{C}$ that is fully faithful and essentially surjective on objects.

\item There exist a $\mbfU$-small category $\mathbb{B}$ and a functor $\Ind[\kappa][\mbfU]{\mathbb{B}} \to \mathcal{C}$ that is fully faithful and essentially surjective on objects.
\end{enumerate}
\end{thm}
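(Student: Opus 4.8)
The plan is to prove the cycle (i) \implies (ii) \implies (iii) \implies (i), using two standing observations. First, a functor that is fully faithful and essentially surjective on objects is an equivalence (by choice), so it preserves and reflects $\tuple{\kappa, \mbfU}$-compactness, the existence of colimits of $\mbfU$-small $\kappa$-filtered diagrams, and — among locally $\mbfU$-small categories — the property of being a $\kappa$-accessible $\mbfU$-category (replacement lets one transport a $\mbfU$-set of $\tuple{\kappa, \mbfU}$-compact generators along the equivalence). Second, the uniqueness clause of \autoref{thm:ind-completion}(v) shows that $\Ind[\kappa][\mbfU]{\blank}$ sends an equivalence $\mathbb{B} \to \mathbb{E}$ of locally $\mbfU$-small categories to an equivalence $\Ind[\kappa][\mbfU]{\mathbb{B}} \to \Ind[\kappa][\mbfU]{\mathbb{E}}$, because a quasi-inverse extends to a quasi-inverse. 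With these in hand, (iii) \implies (i) is immediate: by \autoref{prop:compact.objects.in.accessible.categories}(i) the category $\Ind[\kappa][\mbfU]{\mathbb{B}}$ is a $\kappa$-accessible $\mbfU$-category, and it is equivalent to the locally $\mbfU$-small category $\mathcal{C}$.

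For (i) \implies (ii), let $\mathcal{G}$ be a $\mbfU$-set of $\tuple{\kappa, \mbfU}$-compact generators of $\mathcal{C}$. By \autoref{cor:compact.objects.are.retracts} every $\tuple{\kappa, \mbfU}$-compact object of $\mathcal{C}$ is a retract of some member of $\mathcal{G}$, and since $\mathcal{C}$ is locally $\mbfU$-small these retracts are classified up to isomorphism by idempotent endomorphisms of members of $\mathcal{G}$, which form a $\mbfU$-set; hence $\Kompakt[\kappa][\mbfU]{\mathcal{C}}$ is essentially $\mbfU$-small. Fix a $\mbfU$-small, essentially surjective full subcategory $\mathbb{B} \embedinto \Kompakt[\kappa][\mbfU]{\mathcal{C}}$; then $\Ind[\kappa][\mbfU]{\mathbb{B}} \to \Ind[\kappa][\mbfU]{\Kompakt[\kappa][\mbfU]{\mathcal{C}}}$ is an equivalence, so by \autoref{prop:compact.objects.in.accessible.categories}(i) and the first observation $\Ind[\kappa][\mbfU]{\Kompakt[\kappa][\mbfU]{\mathcal{C}}}$ is a $\kappa$-accessible $\mbfU$-category. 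Since $\mathcal{C}$ has colimits of $\mbfU$-small $\kappa$-filtered diagrams, \autoref{thm:ind-completion}(v) applied to the inclusion $\iota : \Kompakt[\kappa][\mbfU]{\mathcal{C}} \embedinto \mathcal{C}$ yields a functor $\bar\iota : \Ind[\kappa][\mbfU]{\Kompakt[\kappa][\mbfU]{\mathcal{C}}} \to \mathcal{C}$ preserving such colimits with $\bar\iota \circ \gamma = \iota$, and $\bar\iota$ is $\tuple{\kappa, \mbfU}$-accessible by the previous sentence. It is essentially surjective: any object of $\mathcal{C}$ is a colimit ${\indlim}_i K_i$ of a $\mbfU$-small $\kappa$-filtered diagram of objects $K_i$ of $\mathcal{G}$, hence isomorphic to $\bar\iota\parens{{\indlim}_i \gamma K_i}$. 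The substantive point is full faithfulness: writing two arbitrary objects of $\Ind[\kappa][\mbfU]{\Kompakt[\kappa][\mbfU]{\mathcal{C}}}$ as ${\indlim}_i \gamma K_i$ and ${\indlim}_j \gamma L_j$ via \autoref{thm:ind-completion}(iv), one has
\[
\Hom{{\indlim}_i \gamma K_i}{{\indlim}_j \gamma L_j} \;\cong\; {\prolim}_i {\indlim}_j \Hom[\mathcal{C}]{K_i}{L_j} \;\cong\; \Hom[\mathcal{C}]{{\indlim}_i K_i}{{\indlim}_j L_j},
\]
the first isomorphism coming from $\tuple{\kappa, \mbfU}$-compactness of $\gamma K_i$ in $\Ind[\kappa][\mbfU]{\Kompakt[\kappa][\mbfU]{\mathcal{C}}}$ (\autoref{thm:ind-completion}(iv)), full faithfulness of $\gamma$, and fullness of $\Kompakt[\kappa][\mbfU]{\mathcal{C}} \embedinto \mathcal{C}$, and the second from $\tuple{\kappa, \mbfU}$-compactness of the $K_i$ in $\mathcal{C}$; one then checks the composite is the map induced by $\bar\iota$.

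For (ii) \implies (iii), the hypothesis already carries the size bound, since a $\tuple{\kappa, \mbfU}$-accessible functor has a $\kappa$-accessible $\mbfU$-category for its domain: thus $\Ind[\kappa][\mbfU]{\Kompakt[\kappa][\mbfU]{\mathcal{C}}}$ has a $\mbfU$-set $\mathcal{G}$ of $\tuple{\kappa, \mbfU}$-compact generators. Each $\gamma K$, for $K$ in $\Kompakt[\kappa][\mbfU]{\mathcal{C}}$, is $\tuple{\kappa, \mbfU}$-compact (\autoref{thm:ind-completion}(iv)) and a $\mbfU$-small $\kappa$-filtered colimit of members of $\mathcal{G}$, so by \autoref{cor:compact.objects.are.retracts} it is a retract of some member of $\mathcal{G}$; exactly as before this forces $\Kompakt[\kappa][\mbfU]{\mathcal{C}}$ to be essentially $\mbfU$-small, and composing an equivalence $\Ind[\kappa][\mbfU]{\mathbb{B}} \to \Ind[\kappa][\mbfU]{\Kompakt[\kappa][\mbfU]{\mathcal{C}}}$ (for $\mathbb{B} \embedinto \Kompakt[\kappa][\mbfU]{\mathcal{C}}$ a $\mbfU$-small essentially surjective full subcategory) with the functor of (ii) gives the one required by (iii). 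I expect the full-faithfulness computation in (i) \implies (ii) — interchanging the limit with the $\kappa$-filtered colimit and identifying the resulting isomorphism with the comparison induced by $\bar\iota$ — to be the main obstacle, the accompanying subtlety being that the clause ``$\tuple{\kappa, \mbfU}$-accessible functor'' in (ii) tacitly asserts that $\Ind[\kappa][\mbfU]{\Kompakt[\kappa][\mbfU]{\mathcal{C}}}$ is accessible, which is precisely what bounds the size of $\Kompakt[\kappa][\mbfU]{\mathcal{C}}$.
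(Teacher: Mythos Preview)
Your proof is correct and follows the standard argument found in the references the paper cites (Theorem 2.26 in \citep{LPAC} and Theorem 5.3.5 in \citep{Borceux:1994b}); the paper itself does not give an independent proof but simply defers to those sources. The one point worth flagging is your careful handling of the size issue in (ii) \implies (iii): you correctly observe that the phrase ``$\tuple{\kappa, \mbfU}$-accessible functor'' already forces $\Ind[\kappa][\mbfU]{\Kompakt[\kappa][\mbfU]{\mathcal{C}}}$ to be a $\kappa$-accessible $\mbfU$-category, which is exactly what is needed to bound $\Kompakt[\kappa][\mbfU]{\mathcal{C}}$ --- this subtlety is easy to overlook, and your explicit remark about it is well placed.
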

\begin{proof} \openproof
See Theorem 2.26 in \citep{LPAC}, or Theorem 5.3.5 in \citep{Borceux:1994b}.
\end{proof}

\begin{dfn}
\label{dfn:locally.presentable.category}
Let $\kappa$ be a regular cardinal in a universe $\mbfU$. A \strong{locally $\kappa$-presentable $\mbfU$-category} is a $\kappa$-accessible $\mbfU$-category that is also $\mbfU$-\allowhyphens cocom\-plete. A \strong{locally presentable $\mbfU$-category} is one that is a locally $\kappa$-presentable $\mbfU$-\allowhyphens category for some regular cardinal $\kappa$ in $\mbfU$, and we often say `locally finitely presentable' instead of `locally $\aleph_0$-presentable'.
\end{dfn}

\begin{lem}
\label{lem:accessibility.index.of.locally.presentable.categories}
Let $\mathcal{C}$ be a locally $\kappa$-presentable $\mbfU$-category.
\begin{enumerate}[(i)]
\item For any regular cardinal $\lambda$ in $\mbfU$, if $\kappa \le \lambda$, then $\mathcal{C}$ is a locally $\lambda$-present\-able $\mbfU$-category.

\item With $\lambda$ as above, if $F : \mathcal{C} \to \mathcal{D}$ is a $\tuple{\kappa, \mbfU}$-accessible functor, then it is also a $\tuple{\lambda, \mbfU}$-accessible functor.

\item If $\mbfUplus$ is any universe with $\mbfU \in \mbfUplus$, and $\mathcal{C}$ is a locally $\kappa$-presentable $\mbfUplus$-category, then $\mathcal{C}$ must be a preorder.
\end{enumerate}
\end{lem}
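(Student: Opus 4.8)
The three parts are essentially independent, so I would handle them separately.

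\emph{Part (i).} The plan is to build $\mathcal{C}$ up as a $\lambda$-accessible $\mbfU$-category and then appeal to its $\mbfU$-cocompleteness (which it has by hypothesis, and which in particular already supplies colimits for all $\mbfU$-small $\lambda$-filtered diagrams). Since $\mathcal{C}$ is $\kappa$-accessible there is a $\mbfU$-set $\mathcal{G}$ of $\tuple{\kappa, \mbfU}$-compact objects generating $\mathcal{C}$ under $\mbfU$-small $\kappa$-filtered colimits; let $\mathbb{A}$ be the full subcategory of $\mathcal{C}$ on $\mathcal{G}$, which is $\mbfU$-small because $\mathcal{C}$ is locally $\mbfU$-small, and observe that $\mathbb{A}$ is dense in $\mathcal{C}$. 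Using that $\lambda \in \mbfU$ and that $\mathcal{C}$ is locally $\mbfU$-small, close $\mathbb{A}$ under $\lambda$-small colimits inside $\mathcal{C}$ by the usual transfinite construction; the result $\mathbb{A}_\lambda$ is again $\mbfU$-small and dense, and every object of $\mathbb{A}_\lambda$ is $\tuple{\lambda, \mbfU}$-compact by \autoref{lem:small.objects}. By the universal property in \autoref{thm:ind-completion} the inclusion $\mathbb{A}_\lambda \embedinto \mathcal{C}$ extends to a functor $\Ind[\lambda][\mbfU]{\mathbb{A}_\lambda} \to \mathcal{C}$; this functor is fully faithful --- because $\mathbb{A}_\lambda$ is dense, consists of $\tuple{\lambda, \mbfU}$-compact objects, and $\mathcal{C}$ has $\mbfU$-small $\lambda$-filtered colimits --- and, crucially, essentially surjective on objects. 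By \autoref{thm:classification.of.accessible.categories} this exhibits $\mathcal{C}$ as a $\lambda$-accessible $\mbfU$-category, and combined with $\mbfU$-cocompleteness it is then a locally $\lambda$-presentable $\mbfU$-category. The one step that is genuine work is the essential surjectivity, that is, rewriting the canonical $\kappa$-filtered colimit presentation of an object as a $\mbfU$-small $\lambda$-filtered colimit of $\lambda$-small colimits of objects of $\mathbb{A}$; this is the classical cofinality argument for raising the index of presentability, which I would quote from Theorem 1.20 in \citep{LPAC} or Theorem 5.3.5 in \citep{Borceux:1994b}, checking only that every category it uses is $\mbfU$-small.

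\emph{Part (ii).} Given (i) this is immediate: $\mathcal{C}$ is a $\lambda$-accessible $\mbfU$-category by (i), and since $\kappa \le \lambda$ every $\lambda$-filtered category is automatically $\kappa$-filtered (any $\kappa$-small diagram in it is $\lambda$-small, hence admits a cocone). Hence every $\mbfU$-small $\lambda$-filtered diagram is a $\mbfU$-small $\kappa$-filtered diagram, $F$ preserves its colimit by hypothesis, and so $F$ is $\tuple{\lambda, \mbfU}$-accessible.

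\emph{Part (iii).} The plan is to apply \autoref{thm:Freyd.complete.preorder} to $\op{\mathcal{C}}$. Being a $\kappa$-accessible $\mbfU$-category, $\mathcal{C}$ is locally $\mbfU$-small, so $\ob \mathcal{C}$ is a subset of $\mbfU$; since $\mbfU \in \mbfUplus$ and $\mbfUplus$ is a universe, $\ob \mathcal{C}$ is a $\mbfUplus$-set, and consequently $\mor \mathcal{C}$ --- being the union, over the $\mbfUplus$-set $\ob \mathcal{C} \times \ob \mathcal{C}$, of the hom-sets, each of which is a $\mbfU$-set --- is a $\mbfUplus$-set as well. On the other hand $\mathcal{C}$, being a locally $\kappa$-presentable $\mbfUplus$-category, is $\mbfUplus$-cocomplete, so it has coproducts for every family indexed by a $\mbfUplus$-set; equivalently, $\op{\mathcal{C}}$ has products for every family indexed by a $\mbfUplus$-set, in particular for families of size $\card{\mor \mathcal{C}}$. \Autoref{thm:Freyd.complete.preorder} then forces any two parallel morphisms of $\op{\mathcal{C}}$, hence of $\mathcal{C}$, to coincide, so $\mathcal{C}$ is a preorder. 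There is no real obstacle here beyond the set-theoretic bookkeeping needed to place $\mor \mathcal{C}$ inside $\mbfUplus$; note that (iii) uses only that $\mathcal{C}$ is locally $\mbfU$-small and $\mbfUplus$-cocomplete.
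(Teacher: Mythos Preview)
Your proposal is correct and follows the same approach as the paper. For (i) the paper simply cites the remark after Theorem~1.20 in \citep{LPAC} (and Propositions 5.3.2 and 5.2.3 in \citep{Borceux:1994b}), whereas you spell out the construction of $\mathbb{A}_\lambda$ and the cofinality argument; for (ii) both you and the paper use that $\lambda$-filtered implies $\kappa$-filtered; for (iii) the paper says only ``corollary of \autoref{thm:Freyd.complete.preorder}'', while you unpack the application to $\op{\mathcal{C}}$ via $\mbfUplus$-cocompleteness --- which is the right choice, since the $\mbfUplus$-completeness of locally presentable categories has not yet been established at this point in the paper.
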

\begin{proof}
(i). See the remark after Theorem 1.20 in \citep{LPAC}, or Propositions 5.3.2 and 5.2.3 in \citep{Borceux:1994b}.

\bigskip\noindent
(ii). A $\lambda$-filtered diagram is certainly $\kappa$-filtered, so if $F$ preserves colimits for all $\mbfU$-small $\kappa$-filtered diagrams in $\mathcal{C}$, it must also preserve colimits for all $\mbfU$-small $\lambda$-filtered diagrams.

\bigskip\noindent
(iii). This is a corollary of \autoref{thm:Freyd.complete.preorder}.
\end{proof}

\begin{cor}
A category $\mathcal{C}$ is a locally presentable $\mbfU$-category for \emph{at most} one universe $\mbfU$, provided $\mathcal{C}$ is not a preorder.
\end{cor}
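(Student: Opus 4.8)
The plan is to reduce, via the nesting properties of universes, to the situation already treated in \autoref{lem:accessibility.index.of.locally.presentable.categories}(iii), and then to finish with Freyd's theorem. Suppose $\mathcal{C}$ is simultaneously a locally presentable $\mbfU$-category and a locally presentable $\mbfU'$-category, and that $\mathcal{C}$ is not a preorder. Since universes are in particular pre-universes, \autoref{prop:universe.nesting}(iii) gives $\mbfU \subseteq \mbfU'$ or $\mbfU' \subseteq \mbfU$; without loss of generality $\mbfU \subseteq \mbfU'$. If $\mbfU = \mbfU'$ there is nothing to prove, so I would assume $\mbfU \subsetneqq \mbfU'$; then $\mbfU \in \mbfU'$, again by \autoref{prop:universe.nesting}(iii), and the goal becomes to derive a contradiction by showing that $\mathcal{C}$ must after all be a preorder.

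Next I would pin down the relevant cardinal. As a locally presentable $\mbfU$-category, $\mathcal{C}$ is locally $\mbfU$-small, so $\mor \mathcal{C}$ is a subset of $\mbfU$ and hence $\card{\mor \mathcal{C}} \le \card{\mbfU}$. By \autoref{prop:universe.nesting}(i), $\card{\mbfU}$ equals the rank $\rank{\mbfU}$ and is a strongly inaccessible — in particular regular — cardinal, call it $\theta$; and since $\mbfU \in \mbfU'$, the same proposition applied to $\mbfU'$ says the members of $\mbfU'$ are exactly the sets of rank less than $\card{\mbfU'}$, so from $\rank{\mbfU} = \theta < \card{\mbfU'}$ we get $\theta \in \mbfU'$. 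On the other hand, being a locally presentable $\mbfU'$-category, $\mathcal{C}$ is $\mbfU'$-cocomplete, so it admits coproducts for every family indexed by a $\mbfU'$-set, and in particular for families of size $\theta$.

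Finally I would apply \autoref{thm:Freyd.complete.preorder} to $\op{\mathcal{C}}$: we have $\card{\mor \op{\mathcal{C}}} = \card{\mor \mathcal{C}} \le \theta$, and $\op{\mathcal{C}}$ has products for families of size $\theta$, these being precisely the $\theta$-indexed coproducts of $\mathcal{C}$. The theorem then forces any two parallel morphisms of $\op{\mathcal{C}}$, equivalently of $\mathcal{C}$, to coincide, so $\mathcal{C}$ is a preorder, contradicting our standing assumption; hence $\mbfU = \mbfU'$. This is in substance the argument behind \autoref{lem:accessibility.index.of.locally.presentable.categories}(iii), and the only point that needs genuine care — the one I expect to be the main obstacle — is the set-theoretic bookkeeping of which cardinals and which index sets belong to which universe, which is exactly what \autoref{prop:universe.nesting} supplies. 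One could instead try to invoke \autoref{lem:accessibility.index.of.locally.presentable.categories}(iii) verbatim, but that would require first matching the presentability indices of the two locally presentable structures, which is awkward because the $\mbfU'$-index need not lie in $\mbfU$; running the short Freyd argument directly sidesteps this difficulty.
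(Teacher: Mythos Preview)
Your proof is correct and follows the same route as the paper, which simply cites \autoref{prop:universe.nesting} together with \autoref{lem:accessibility.index.of.locally.presentable.categories}. Your worry about matching the presentability indices when invoking part~(iii) verbatim is reasonable given its literal statement, but since the proof of (iii) is itself nothing more than an appeal to Freyd's theorem, your direct unpacking is the same argument with the bookkeeping made explicit.
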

\begin{proof}
Use \autoref{prop:universe.nesting} together with the above lemma.
\end{proof}

\begin{thm}[Classification of locally presentable categories]
\label{thm:classification.of.locally.presentable.categories}
Let $\kappa$ be a regular cardinal in a universe $\mbfU$, let $\cat{\Set}$ be the category of $\mbfU$-sets, and let $\mathcal{C}$ be a locally $\mbfU$-small category. The following are equivalent:
\begin{enumerate}[(i)]
\item $\mathcal{C}$ is a locally $\kappa$-presentable $\mbfU$-category.

\item There exist a $\mbfU$-small category $\mathbb{B}$ that has colimits for $\kappa$-small diagrams and a functor $\Ind[\kappa][\mbfU]{\mathbb{B}} \to \mathcal{C}$ that is fully faithful and essentially surjective on objects.

\item The restricted Yoneda embedding $\mathcal{C} \to \Func{\op{\Kompakt[\kappa][\mbfU]{\mathcal{C}}}}{\cat{\Set}}$ is fully faithful, $\tuple{\kappa, \mbfU}$-accessible, and has a left adjoint.

\item There exist a $\mbfU$-small category $\mathbb{A}$ and a fully faithful $\tuple{\kappa, \mbfU}$-accessible functor $R : \mathcal{C} \to \Func{\mathbb{A}}{\cat{\Set}}$ such that $\mathbb{A}$ has limits for all $\kappa$-small diagrams, $R$ has a left adjoint, and $R$ is essentially surjective onto the full subcategory of functors $\mathbb{A} \to \cat{\Set}$ that preserve limits for all $\kappa$-small diagrams.

\item There exist a $\mbfU$-small category $\mathbb{A}$ and a fully faithful $\tuple{\kappa, \mbfU}$-accessible functor $R : \mathcal{C} \to \Func{\mathbb{A}}{\cat{\Set}}$ such that $R$ has a left adjoint.

\item $\mathcal{C}$ is a $\kappa$-accessible $\mbfU$-category and is $\mbfU$-complete.
\end{enumerate}
\end{thm}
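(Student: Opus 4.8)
The plan is to run the cycle (i) $\Rightarrow$ (ii) $\Rightarrow$ (iii) $\Rightarrow$ (i), to obtain (ii) $\Rightarrow$ (iv) $\Rightarrow$ (v) by the very same argument together with trivial weakenings, and to dispatch (vi) by establishing (i) $\Leftrightarrow$ (vi) on the side. The recurring device is that for any $\mbfU$-small category $\mathbb{A}$ the functor category $\Func{\mathbb{A}}{\cat{\Set}}$ is a locally finitely presentable $\mbfU$-category — hence a locally $\kappa$-presentable $\mbfU$-category by \autoref{lem:accessibility.index.of.locally.presentable.categories} — and that a fully faithful functor exhibiting $\mathcal{C}$ as a reflective subcategory of such a $\Func{\mathbb{A}}{\cat{\Set}}$ that is closed under $\mbfU$-small $\kappa$-filtered colimits is already enough to force $\mathcal{C}$ to be locally $\kappa$-presentable; all of this is to take place inside the one fixed universe $\mbfU$, which is the real point.

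For (i) $\Rightarrow$ (ii): a locally $\kappa$-presentable $\mbfU$-category is in particular a $\kappa$-accessible $\mbfU$-category, so by \autoref{thm:classification.of.accessible.categories} and \autoref{prop:compact.objects.in.accessible.categories} the subcategory $\Kompakt[\kappa][\mbfU]{\mathcal{C}}$ is essentially $\mbfU$-small and $\gamma$ exhibits $\Ind[\kappa][\mbfU]{\Kompakt[\kappa][\mbfU]{\mathcal{C}}} \to \mathcal{C}$ as fully faithful and essentially surjective on objects; since $\mathcal{C}$ is $\mbfU$-cocomplete it has $\kappa$-small colimits, which by \autoref{lem:small.objects} restrict to $\Kompakt[\kappa][\mbfU]{\mathcal{C}}$, so a $\mbfU$-small skeleton $\mathbb{B}$ of $\Kompakt[\kappa][\mbfU]{\mathcal{C}}$ is as required.

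For (ii) $\Rightarrow$ (iii) and (ii) $\Rightarrow$ (iv): given such a $\mathbb{B}$, the functor of (ii) is an equivalence $\mathcal{C} \simeq \Ind[\kappa][\mbfU]{\mathbb{B}}$, so $\mathcal{C}$ is a $\kappa$-accessible $\mbfU$-category (\autoref{prop:compact.objects.in.accessible.categories}), and since $\mathbb{B}$ has $\kappa$-small colimits and every object of $\Ind[\kappa][\mbfU]{\mathbb{B}}$ is a $\mbfU$-small $\kappa$-filtered colimit of representables, $\mathcal{C}$ has $\kappa$-small colimits (\autoref{thm:ind-completion}); hence by \autoref{lem:small.objects} the subcategory $\Kompakt[\kappa][\mbfU]{\mathcal{C}}$ has $\kappa$-small colimits too. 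Under the equivalence $\mathcal{C} \simeq \Ind[\kappa][\mbfU]{\Kompakt[\kappa][\mbfU]{\mathcal{C}}}$ supplied by \autoref{thm:classification.of.accessible.categories}, the restricted Yoneda embedding becomes the canonical embedding of this ind-completion into the presheaf category; since $\op{\Kompakt[\kappa][\mbfU]{\mathcal{C}}}$ now has $\kappa$-small limits, the Gabriel--Ulmer theorem (Satz~5.2 in \citep{Gabriel-Ulmer:1971}; cf.\ \autoref{thm:filtered.colimits.in.set}) identifies its essential image with the full subcategory of those functors $\op{\Kompakt[\kappa][\mbfU]{\mathcal{C}}} \to \cat{\Set}$ that preserve limits for all $\kappa$-small diagrams. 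That subcategory is reflective — being the orthogonal complement of the $\mbfU$-small set of colimit cocones witnessing the $\kappa$-small colimits of $\Kompakt[\kappa][\mbfU]{\mathcal{C}}$ — and closed under $\mbfU$-small $\kappa$-filtered colimits, since those are computed pointwise in the presheaf category and commute with $\kappa$-small limits in $\cat{\Set}$ by \autoref{thm:filtered.colimits.in.set}; reflectivity supplies the left adjoint and the closure supplies $\tuple{\kappa, \mbfU}$-accessibility, which is (iii). Running the same argument with $\mathbb{A} := \op{\mathbb{B}}$ in place of $\op{\Kompakt[\kappa][\mbfU]{\mathcal{C}}}$ records precisely the extra data claimed in (iv), and (iv) $\Rightarrow$ (v) merely forgets hypotheses.

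To close the cycle, each of (iii), (iv), (v) provides a $\mbfU$-small $\mathbb{A}$ and a fully faithful $\tuple{\kappa, \mbfU}$-accessible functor $R : \mathcal{C} \to \Func{\mathbb{A}}{\cat{\Set}}$ with a left adjoint $L$; here $\Func{\mathbb{A}}{\cat{\Set}}$ is $\mbfU$-complete and $\mbfU$-cocomplete because (co)limits in it are formed pointwise in $\cat{\Set}$, which has them by \autoref{thm:foundations:stability.of.limits.and.colimits.of.sets}. Since $R$ is fully faithful we have $L R \cong \id_{\mathcal{C}}$, so $\mathcal{C}$ is reflective in $\Func{\mathbb{A}}{\cat{\Set}}$ and thereby inherits all $\mbfU$-small limits and colimits; in particular it is $\mbfU$-cocomplete. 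Moreover $R$ preserves $\mbfU$-small $\kappa$-filtered colimits, so $L$ carries $\tuple{\kappa, \mbfU}$-compact objects to $\tuple{\kappa, \mbfU}$-compact objects; writing an arbitrary object of $\Func{\mathbb{A}}{\cat{\Set}}$ as a $\mbfU$-small $\kappa$-filtered colimit of objects drawn from the essentially $\mbfU$-small subcategory $\Kompakt[\kappa][\mbfU]{\Func{\mathbb{A}}{\cat{\Set}}}$ and applying $L$ exhibits every object of $\mathcal{C}$ in the same way, so $\mathcal{C}$ is a $\kappa$-accessible $\mbfU$-category; with $\mbfU$-cocompleteness this gives (i). Finally, (i) $\Rightarrow$ (vi) because (iii) presents $\mathcal{C}$ as a reflective subcategory of the $\mbfU$-complete category $\Func{\op{\Kompakt[\kappa][\mbfU]{\mathcal{C}}}}{\cat{\Set}}$ (\autoref{thm:foundations:stability.of.limits.and.colimits.of.sets}) and reflective subcategories are closed under limits, while (vi) $\Rightarrow$ (i) is the classical theorem that a $\mbfU$-complete $\kappa$-accessible $\mbfU$-category is $\mbfU$-cocomplete, hence locally presentable (see \citep{LPAC} or \citep{Makkai-Pare:1989}). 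I expect the genuine difficulty not to sit in any single implication but to be diffuse: it is the verification that the few imported facts — that $\Func{\mathbb{A}}{\cat{\Set}}$ is locally finitely presentable, the Gabriel--Ulmer presentation of $\Ind[\kappa][\mbfU]{\mathbb{B}}$ as a reflective subcategory of presheaves, and the implication (vi) $\Rightarrow$ (i) — all go through with $\mbfU$ held fixed, which they do because the relevant solution-set and localisation data are manufactured from the $\mbfU$-small categories $\mathbb{B}$, $\mathbb{A}$ and a skeleton of $\Kompakt[\kappa][\mbfU]{\mathcal{C}}$ and never escape $\mbfU$.
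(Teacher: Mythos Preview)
The paper does not actually supply a proof here: it simply cites Propositions~1.27, Corollary~1.28, Theorem~1.46, and Corollary~2.47 in \citep{LPAC}, or Theorems~5.2.7 and~5.5.8 in \citep{Borceux:1994b}, and marks the statement with an open box. Your sketch is therefore already more than the paper offers, and it is essentially the standard argument one finds in those references: the cycle through ind-completions and the reflective embedding into a presheaf category, with Gabriel--Ulmer identifying the essential image, and the passage from (v) back to (i) via the observation that the reflector sends $\tuple{\kappa,\mbfU}$-compact objects to $\tuple{\kappa,\mbfU}$-compact objects. Your handling of the universe parameter is appropriate and matches the spirit of the paper; nothing in the argument requires enlarging $\mbfU$.

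One small point worth tightening: in your (ii) $\Rightarrow$ (iii) step you invoke reflectivity of the subcategory of $\kappa$-small-limit-preserving functors as ``the orthogonal complement of the $\mbfU$-small set of colimit cocones''. That is correct, but the orthogonality/localisation machinery you are appealing to is itself a nontrivial instance of the adjoint functor theorem for locally presentable categories, so in a fully self-contained treatment you would either cite it directly (as the paper does) or unwind it via a small-object argument. This is not a gap in correctness, only in self-containment, and the paper itself resolves it by citation.
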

\begin{proof} \openproof
See Proposition 1.27, Corollary 1.28, Theorem 1.46, and Corollary 2.47 in \citep{LPAC}, or Theorems 5.2.7 and 5.5.8 in \citep{Borceux:1994b}.
\end{proof}

\begin{remark}
If $\mathcal{C}$ is equivalent to $\Ind[\kappa][\mbfU]{\mathbb{B}}$ for some $\mbfU$-small category $\mathbb{B}$ that has colimits for all $\kappa$-small diagrams, then $\mathbb{B}$ must be equivalent to $\Kompakt[\kappa][\mbfU]{\mathcal{C}}$ by \autoref {prop:compact.objects.in.accessible.categories}. In other words, every locally $\kappa$-presentable $\mbfU$-category is, up to equivalence, the $\tuple{\kappa, \mbfU}$-ind-completion of an essentially unique $\mbfU$-small $\kappa$-cocomplete category.
\end{remark}

\begin{example}
Obviously, for any $\mbfU$-small category $\mathbb{A}$, the functor category $\Func{\mathbb{A}}{\cat{\Set}}$ is locally finitely presentable. More generally, one may show that for any $\kappa$-ary algebraic theory $\mathsf{T}$, possibly many-sorted, the category of $\mathsf{T}$-algebras in $\mbfU$ is a locally $\kappa$-presentable $\mbfU$-category. The above theorem can also be used to show that $\cat{\Cat}$, the category of $\mbfU$-small categories, is a locally finitely presentable $\mbfU$-small category.
\end{example}

\begin{cor}
\label{cor:filtered.colimits.preserve.tiny.limits}
Let $\mathcal{C}$ be a locally $\kappa$-presentable $\mbfU$-category. For any $\mbfU$-small $\kappa$-filtered diagram $\mathbb{D}$, ${\indlim}_{\mathbb{D}} : \Func{\mathbb{D}}{\mathcal{C}} \to \mathcal{C}$ preserves $\kappa$-small limits.
\end{cor}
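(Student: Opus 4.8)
The plan is to transport the claim across the restricted Yoneda embedding into a presheaf category and then invoke \autoref{thm:filtered.colimits.in.set}, where $\kappa$-filtered colimits of sets already commute with $\kappa$-small limits. To make the statement concrete: the functor $\indlim_{\mathbb{D}} : \Func{\mathbb{D}}{\mathcal{C}} \to \mathcal{C}$ exists because $\mathcal{C}$ has colimits for $\mbfU$-small $\kappa$-filtered diagrams, and a $\kappa$-small diagram in $\Func{\mathbb{D}}{\mathcal{C}}$ is, by currying, a functor $X : \mathbb{I} \times \mathbb{D} \to \mathcal{C}$ with $\mathbb{I}$ a $\kappa$-small category, which we may take to be $\mbfU$-small since $\kappa \in \mbfU$. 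As $\mathcal{C}$ is also $\mbfU$-complete (\autoref{thm:classification.of.locally.presentable.categories}), both $\indlim_{\mathbb{D}} \prolim_{\mathbb{I}} X$ and $\prolim_{\mathbb{I}} \indlim_{\mathbb{D}} X$ exist, and since limits in $\Func{\mathbb{D}}{\mathcal{C}}$ are computed pointwise, the corollary is exactly the assertion that the canonical comparison morphism
\[
\theta : \indlim_{\mathbb{D}} \prolim_{\mathbb{I}} X \longrightarrow \prolim_{\mathbb{I}} \indlim_{\mathbb{D}} X
\]
is an isomorphism for every such $X$.

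Next I would pass to a presheaf category. By \autoref{thm:classification.of.locally.presentable.categories}, the restricted Yoneda embedding $y : \mathcal{C} \to \Func{\op{\Kompakt[\kappa][\mbfU]{\mathcal{C}}}}{\cat{\Set}}$ is fully faithful, is $\tuple{\kappa, \mbfU}$-accessible (so preserves colimits for $\mbfU$-small $\kappa$-filtered diagrams), and has a left adjoint (so preserves all limits); and by \autoref{prop:compact.objects.in.accessible.categories} the category $\Kompakt[\kappa][\mbfU]{\mathcal{C}}$ is essentially $\mbfU$-small, so after replacing it by an equivalent $\mbfU$-small category $\mathbb{A}$ we may take $y$ to land in the honest presheaf category $\Func{\op{\mathbb{A}}}{\cat{\Set}}$. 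As $y$ is fully faithful it reflects isomorphisms, so it is enough to show $y(\theta)$ is an isomorphism; and since $y$ sends limiting cones to limiting cones and colimiting cones for $\mbfU$-small $\kappa$-filtered diagrams to colimiting cones, it carries $\theta$ to the analogous comparison morphism $\indlim_{\mathbb{D}} \prolim_{\mathbb{I}}(y \circ X) \to \prolim_{\mathbb{I}} \indlim_{\mathbb{D}}(y \circ X)$ in $\Func{\op{\mathbb{A}}}{\cat{\Set}}$.

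Finally, limits and colimits in $\Func{\op{\mathbb{A}}}{\cat{\Set}}$ are computed pointwise, so this last morphism is an isomorphism provided that in $\cat{\Set}$ the colimit of a $\mbfU$-small $\kappa$-filtered diagram commutes with limits of diagrams that are simultaneously $\kappa$-small and $\mbfU$-small --- which is precisely the implication (i) \implies (ii) of \autoref{thm:filtered.colimits.in.set} applied to the $\kappa$-filtered category $\mathbb{D}$. I expect the only delicate points to be size bookkeeping --- ensuring $\mathbb{I}$ and $\Kompakt[\kappa][\mbfU]{\mathcal{C}}$ may be taken $\mbfU$-small, so that \autoref{thm:filtered.colimits.in.set} genuinely applies --- and checking that $y$ really transports $\theta$ to the comparison morphism for $y \circ X$, which is the usual argument that a functor preserving the relevant universal properties preserves the induced comparison map.
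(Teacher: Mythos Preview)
Your proof is correct and follows essentially the same approach as the paper: both arguments reduce to $\cat{\Set}$ via a fully faithful $\tuple{\kappa, \mbfU}$-accessible right adjoint $\mathcal{C} \to \Func{\mathbb{A}}{\cat{\Set}}$ supplied by the classification theorem (the paper says ``creates'' where you spell out ``preserves and reflects''), and then invoke \autoref{thm:filtered.colimits.in.set} pointwise. Your version is simply more explicit about which embedding to use (the restricted Yoneda embedding) and about the size bookkeeping.
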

\begin{proof}
The claim is certainly true when $\mathcal{C} = \Func{\mathbb{A}}{\cat{\Set}}$, by \autoref{thm:filtered.colimits.in.set}. In general, choose a $\tuple{\kappa, \mbfU}$-accessible fully faithful functor $R : \mathcal{C} \to \Func{\mathbb{A}}{\cat{\Set}}$ with a left adjoint, and simply note that $R$ creates limits for all $\mbfU$-small diagrams as well as colimits for all $\mbfU$-small $\kappa$-filtered diagrams.
\end{proof}

\begin{prop}
\label{prop:locally.presentable.functor.categories}
If $\mathcal{C}$ is a locally $\kappa$-presentable $\mbfU$-category and $\mathbb{D}$ is any $\mbfU$-small category, then the functor category $\Func{\mathbb{D}}{\mathcal{C}}$ is also a locally $\kappa$-presentable category.
\end{prop}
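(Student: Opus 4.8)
The plan is to present $\Func{\mathbb{D}}{\mathcal{C}}$ as a full reflective subcategory of a presheaf category whose inclusion preserves $\mbfU$-small $\kappa$-filtered colimits, and then to check that every such subcategory is a locally $\kappa$-presentable $\mbfU$-category. First I would record the routine structure. By the proposition that a functor category from a $\mbfU$-small category into a locally $\mbfU$-small category is locally $\mbfU$-small, $\Func{\mathbb{D}}{\mathcal{C}}$ is locally $\mbfU$-small; and since $\mbfU$-small limits and colimits in a functor category are computed pointwise and $\mathcal{C}$ is both $\mbfU$-complete and $\mbfU$-cocomplete (being locally $\kappa$-presentable; see \autoref{thm:classification.of.locally.presentable.categories}), so is $\Func{\mathbb{D}}{\mathcal{C}}$. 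In particular $\Func{\mathbb{D}}{\mathcal{C}}$ has all $\mbfU$-small $\kappa$-filtered colimits, computed pointwise.

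Next, by \autoref{thm:classification.of.locally.presentable.categories}(v) there exist a $\mbfU$-small category $\mathbb{A}$ and a fully faithful functor $R \colon \mathcal{C}\to\Func{\mathbb{A}}{\cat{\Set}}$ that has a left adjoint $L$ and, being $\tuple{\kappa,\mbfU}$-accessible, preserves $\mbfU$-small $\kappa$-filtered colimits. Post-composition with $R$ yields a functor $R_\ast \colon \Func{\mathbb{D}}{\mathcal{C}}\to\Func{\mathbb{D}}{\Func{\mathbb{A}}{\cat{\Set}}}$, and there is a canonical isomorphism $\Func{\mathbb{D}}{\Func{\mathbb{A}}{\cat{\Set}}}\cong\Func{\mathbb{D}\times\mathbb{A}}{\cat{\Set}}$. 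By $2$-functoriality of $\Func{\mathbb{D}}{\blank}$, $R_\ast$ is fully faithful and has post-composition with $L$ as a left adjoint; and $R_\ast$ preserves $\mbfU$-small $\kappa$-filtered colimits because on both sides these are computed pointwise and $R$ preserves them. Finally, $\mathbb{D}\times\mathbb{A}$ is $\mbfU$-small, so $\Func{\mathbb{D}\times\mathbb{A}}{\cat{\Set}}$ is locally finitely presentable (as in the example above), hence a locally $\kappa$-presentable $\mbfU$-category by \autoref{lem:accessibility.index.of.locally.presentable.categories}(i).

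It then remains to prove the following general fact: if $i \colon \mathcal{E}\to\mathcal{F}$ is fully faithful, has a left adjoint $L'$, and preserves $\mbfU$-small $\kappa$-filtered colimits, $\mathcal{E}$ is locally $\mbfU$-small, and $\mathcal{F}$ is a locally $\kappa$-presentable $\mbfU$-category, then $\mathcal{E}$ is a locally $\kappa$-presentable $\mbfU$-category; applying this with $\mathcal{E}=\Func{\mathbb{D}}{\mathcal{C}}$, $\mathcal{F}=\Func{\mathbb{D}\times\mathbb{A}}{\cat{\Set}}$ and $i=R_\ast$ finishes the proof. For the fact: $\mathcal{E}$ is $\mbfU$-cocomplete, its colimits obtained by reflecting those of $\mathcal{F}$, and $i$ preserves $\mbfU$-small $\kappa$-filtered colimits. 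Fix a $\mbfU$-set $\mathcal{G}$ of $\tuple{\kappa,\mbfU}$-compact objects of $\mathcal{F}$ such that every object of $\mathcal{F}$ is a $\mbfU$-small $\kappa$-filtered colimit of objects of $\mathcal{G}$. For $P$ in $\mathcal{G}$, the natural isomorphism $\Hom[\mathcal{E}]{L' P}{\blank}\cong\Hom[\mathcal{F}]{P}{i\,\blank}$, together with the facts that $i$ preserves $\mbfU$-small $\kappa$-filtered colimits and that $P$ is $\tuple{\kappa,\mbfU}$-compact, shows $L' P$ is $\tuple{\kappa,\mbfU}$-compact in $\mathcal{E}$; and $\set{L' P}{P\in\mathcal{G}}$ is again a $\mbfU$-set by $\mbfU$-replacement. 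Moreover, for any object $E$ of $\mathcal{E}$, writing $i E$ as a $\mbfU$-small $\kappa$-filtered colimit $\indlim_{\mathbb{J}} D$ of objects of $\mathcal{G}$ in $\mathcal{F}$ and applying the cocontinuous functor $L'$, we obtain $E\cong L' i E\cong\indlim_{\mathbb{J}}(L' D)$ — using that $i$ is fully faithful, so that $L' i\cong\id_{\mathcal{E}}$ — which exhibits $E$ as a $\mbfU$-small $\kappa$-filtered colimit of objects from $\set{L' P}{P\in\mathcal{G}}$. Hence $\mathcal{E}$ is a $\kappa$-accessible $\mbfU$-category, and being $\mbfU$-cocomplete it is a locally $\kappa$-presentable $\mbfU$-category by \autoref{dfn:locally.presentable.category}.

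I do not expect a serious obstacle. The one step that genuinely uses the hypotheses is the verification that $L'$ carries a set of $\tuple{\kappa,\mbfU}$-compact generators of $\mathcal{F}$ to one of $\mathcal{E}$: preservation of $\mbfU$-small $\kappa$-filtered colimits by $i$ is what keeps the images compact, and cocontinuity of $L'$ is what makes them generate. The only other point requiring mild care is the set-theoretic bookkeeping — that $\Func{\mathbb{D}}{\mathcal{C}}$ is locally $\mbfU$-small and that $\set{L' P}{P\in\mathcal{G}}$ is genuinely a $\mbfU$-set — which is exactly where the ambient universe axioms ($\mbfU$-replacement) are invoked.
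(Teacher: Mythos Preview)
Your proof is correct and follows essentially the same route as the paper's one-line sketch: the paper says to use the classification theorem together with the fact that the $2$-functor $\Func{\mathbb{D}}{\blank}$ preserves reflective subcategories, and your argument is precisely a fleshed-out version of this --- you extract $R : \mathcal{C} \to \Func{\mathbb{A}}{\cat{\Set}}$ from \autoref{thm:classification.of.locally.presentable.categories}(v), push it forward to $R_\ast : \Func{\mathbb{D}}{\mathcal{C}} \to \Func{\mathbb{D} \times \mathbb{A}}{\cat{\Set}}$, and verify the requisite properties. Your final ``general fact'' (an accessibly-embedded reflective subcategory of a locally $\kappa$-presentable category is locally $\kappa$-presentable) is a direct re-proof of the implication (v) $\Rightarrow$ (i) of the classification theorem, which you could in principle have cited instead --- though given that the paper's \autoref{dfn:accessible.functor} bakes $\kappa$-accessibility of the domain into the definition of $\tuple{\kappa,\mbfU}$-accessible functor, spelling it out as you did avoids an apparent circularity.
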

\begin{proof} \openproof
This can be proven using the classification theorem by noting that the 2-functor $\Func{\mathbb{D}}{\blank}$ preserves reflective subcategories, but see also Corollary 1.54 in \citep{LPAC}.
\end{proof}

\begin{prop}
\label{prop:compact.objects.in.locally.presentable.categories}
Let $\mathcal{C}$ be a locally $\kappa$-presentable $\mbfU$-category, and let $\lambda$ be a regular cardinal in $\mbfU$ with $\lambda \ge \kappa$. If $\mathcal{H}$ is a small full subcategory of $\mathcal{C}$ such that
\begin{itemize}
\item every $\tuple{\kappa, \mbfU}$-compact object in $\mathcal{C}$ is isomorphic to an object in $\mathcal{H}$, and

\item $\mathcal{H}$ is closed in $\mathcal{C}$ under colimits for $\lambda$-small diagrams,
\end{itemize}
then every $\tuple{\lambda, \mbfU}$-compact object in $\mathcal{C}$ is isomorphic to an object in $\mathcal{H}$. In particular, $\Kompakt[\lambda][\mbfU]{\mathcal{C}}$ is the smallest replete full subcategory of $\mathcal{C}$ containing $\Kompakt[\kappa][\mbfU]{\mathcal{C}}$ and closed in $\mathcal{C}$ under colimits for $\lambda$-small diagrams.
\end{prop}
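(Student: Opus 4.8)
The plan is to reduce the first assertion to the claim that every $\tuple{\lambda, \mbfU}$-compact object $A$ of $\mathcal{C}$ is a \emph{retract} of a colimit of some $\lambda$-small diagram of $\tuple{\kappa, \mbfU}$-compact objects. Granting this claim, the two hypotheses on $\mathcal{H}$ finish the argument at once: the vertices of the $\lambda$-small diagram are isomorphic to objects of $\mathcal{H}$ by the first hypothesis, so its colimit is too by the second; and $\mathcal{H}$ is automatically closed under retracts, because a retract is a split coequaliser of a finite --- hence $\lambda$-small --- diagram, exactly as in the proof of \autoref{cor:retracts.of.compact.objects}. Hence $A$ is isomorphic to an object of $\mathcal{H}$.

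To prove the claim I would begin from the fact that, since $\mathcal{C}$ is a $\kappa$-accessible $\mbfU$-category, $A$ is the colimit $\indlim_{\mathbb{D}} B$ of some $\mbfU$-small $\kappa$-filtered diagram $B : \mathbb{D} \to \mathcal{C}$ taking $\tuple{\kappa, \mbfU}$-compact values. Using the standard fact that a $\kappa$-filtered colimit can be computed over a $\kappa$-directed poset, I may assume $\mathbb{D}$ itself is a $\mbfU$-small $\kappa$-directed poset. Now let $\mathcal{P}$ be the poset of $\lambda$-small full subposets of $\mathbb{D}$, ordered by inclusion. Because $\lambda$ is regular the union of fewer than $\lambda$ members of $\mathcal{P}$ is again a member, so $\mathcal{P}$ is $\lambda$-filtered; it is $\mbfU$-small, as its members are indexed by the $\lambda$-small subsets of the $\mbfU$-set $\ob \mathbb{D}$; and $\mathbb{D}$ is the union of $\mathcal{P}$. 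A routine interchange of colimits --- legitimate since $\mathcal{C}$ is $\mbfU$-cocomplete --- then identifies $A = \indlim_{\mathbb{D}} B$ with the $\mbfU$-small $\lambda$-filtered colimit $\indlim_{\mathbb{E} \in \mathcal{P}} A_{\mathbb{E}}$, writing $A_{\mathbb{E}}$ for $\indlim_{\mathbb{E}} B|_{\mathbb{E}}$. As $A$ is $\tuple{\lambda, \mbfU}$-compact, $\Hom[\mathcal{C}]{A}{\blank}$ preserves this colimit, so $\id_A$ factors through a coprojection $A_{\mathbb{E}} \to A$ for some $\mathbb{E} \in \mathcal{P}$; that is, $A$ is a retract of $A_{\mathbb{E}}$. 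Since $\mathbb{E}$ is $\lambda$-small and $B|_{\mathbb{E}}$ takes $\tuple{\kappa, \mbfU}$-compact values, this is precisely the presentation the claim demands.

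For the final assertion I would note that $\Kompakt[\lambda][\mbfU]{\mathcal{C}}$ is replete, contains $\Kompakt[\kappa][\mbfU]{\mathcal{C}}$ by \autoref{lem:compactness.rank}, and is closed in $\mathcal{C}$ under colimits of $\lambda$-small diagrams by \autoref{lem:small.objects} (applicable because $\mathcal{C}$ is locally $\mbfU$-small and $\lambda$ is a regular cardinal in $\mbfU$); so it is one of the candidate subcategories. It is the smallest one, because the argument above never used the smallness of $\mathcal{H}$: applied to an arbitrary replete full subcategory $\mathcal{H}_1$ that contains $\Kompakt[\kappa][\mbfU]{\mathcal{C}}$ and is closed under $\lambda$-small colimits, it shows that every $\tuple{\lambda, \mbfU}$-compact object of $\mathcal{C}$ lies in $\mathcal{H}_1$, so that $\Kompakt[\lambda][\mbfU]{\mathcal{C}} \subseteq \mathcal{H}_1$.

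The hard part will be the middle paragraph, namely reorganising the given $\kappa$-filtered presentation of $A$ into a $\lambda$-filtered colimit of colimits of $\lambda$-small diagrams. Passing first to a $\kappa$-directed poset is the device that keeps this clean, since it guarantees that a full subposet on fewer than $\lambda$ objects really is $\lambda$-small; one still has to verify the interchange isomorphism $\indlim_{\mathbb{D}} B \cong \indlim_{\mathbb{E} \in \mathcal{P}} \indlim_{\mathbb{E}} B|_{\mathbb{E}}$, but this is routine once $\mathbb{D}$ is displayed as the $\lambda$-filtered union $\mathcal{P}$.
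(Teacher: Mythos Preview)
Your argument is correct, and it is genuinely different from the paper's. The paper never manipulates a chosen $\kappa$-filtered presentation of the object; instead it works with the comma category $\commacat{\mathcal{H}}{C}$ directly. The closure of $\mathcal{H}$ under $\lambda$-small colimits makes $\commacat{\mathcal{H}}{C}$ a $\lambda$-filtered category, and the smallness hypothesis on $\mathcal{H}$ makes it $\mbfU$-small. The paper then shows, via a cofinality argument with $\mathcal{G} = \mathcal{H} \cap \Kompakt[\kappa][\mbfU]{\mathcal{C}}$ and the classification theorem, that the tautological cocone on $\commacat{\mathcal{H}}{C} \to \mathcal{C}$ is colimiting with vertex $C$; \autoref{cor:compact.objects.are.retracts} then exhibits $C$ as a retract of something in $\mathcal{H}$.

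Your route --- passing to a $\kappa$-directed poset, then taking the $\lambda$-filtered poset of its $\lambda$-small subposets and interchanging colimits --- is more combinatorial but has a real payoff: as you observe, it never uses that $\mathcal{H}$ is small, so the first assertion of the proposition holds without that hypothesis, and the ``in particular'' clause follows immediately rather than needing a separate remark. The paper's approach, by contrast, avoids the directed-poset replacement and the colimit-interchange lemma, trading them for the cofinality statement and an appeal to the classification theorem; it is shorter but genuinely needs $\mathcal{H}$ small to keep $\commacat{\mathcal{H}}{C}$ a $\mbfU$-small index category.
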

\begin{proof}
Let $C$ be any $\tuple{\lambda, \mbfU}$-compact object in $\mathcal{C}$. Clearly, the comma category $\commacat{\mathcal{H}}{C}$ is a $\mbfU$-small $\lambda$-filtered category. Let $\mathcal{G} = \mathcal{H} \cap \Kompakt[\kappa][\mbfU]{\mathcal{C}}$. One can show that $\commacat{\mathcal{G}}{C}$ is a cofinal subcategory in $\commacat{\mathcal{H}}{C}$, and the classification theorem (\ref{thm:classification.of.locally.presentable.categories}) implies that tautological cocone on the canonical diagram $\commacat{\mathcal{G}}{C} \to \mathcal{C}$ is colimiting, so the tautological cocone on the diagram $\commacat{\mathcal{H}}{C} \to \mathcal{C}$ is also colimiting. Now, by \autoref{cor:compact.objects.are.retracts}, $C$ is a retract of an object in $\mathcal{H}$, and hence $C$ must be isomorphic to an object in $\mathcal{H}$, because $\mathcal{H}$ is closed under coequalisers.

For the final claim, note that $\Kompakt[\lambda][\mbfU]{\mathcal{C}}$ is certainly a replete full subcategory of $\mathcal{C}$ and contained in any replete full subcategory containing $\Kompakt[\kappa][\mbfU]{\mathcal{C}}$ and closed in $\mathcal{C}$ under colimits for $\lambda$-small diagrams, so we just have to show that $\Kompakt[\lambda][\mbfU]{\mathcal{C}}$ is also closed in $\mathcal{C}$ under colimits for $\lambda$-small diagrams; for this, we simply appeal to \autoref{lem:small.objects}.
\end{proof}

\begin{prop}
\label{prop:compact.objects.in.diagram.categories}
Let $\mathcal{C}$ be a locally $\kappa$-presentable $\mbfU$-category and let $\mathbb{D}$ be a $\mu$-small category in $\mbfU$. The $\tuple{\lambda, \mbfU}$-compact objects in $\Func{\mathbb{D}}{\mathcal{C}}$ are precisely the diagrams $\mathbb{D} \to \mathcal{C}$ that are componentwise $\tuple{\lambda, \mbfU}$-compact, so long as $\lambda \ge \max \set{ \kappa, \mu }$.
\end{prop}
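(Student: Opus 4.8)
The plan is to prove the two inclusions separately, reducing each to the commutation of $\mbfU$-small $\lambda$-filtered colimits with $\lambda$-small limits: this holds in $\cat{\Set}$ by \autoref{thm:filtered.colimits.in.set}, and in $\mathcal{C}$ by \autoref{cor:filtered.colimits.preserve.tiny.limits}. This is the one place where $\lambda \ge \kappa$ is used, to ensure that $\mathcal{C}$, and hence also $\Func{\mathbb{D}}{\mathcal{C}}$ by \autoref{prop:locally.presentable.functor.categories}, is a locally $\lambda$-presentable $\mbfU$-category (\autoref{lem:accessibility.index.of.locally.presentable.categories}). I shall use throughout that colimits in $\Func{\mathbb{D}}{\mathcal{C}}$ are computed componentwise and that $\ob \mathbb{D}$ and $\mor \mathbb{D}$ have cardinality $< \mu \le \lambda$.

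First, suppose $F : \mathbb{D} \to \mathcal{C}$ is componentwise $\tuple{\lambda, \mbfU}$-compact. For each object $d$ of $\mathbb{D}$ the evaluation functor $\mathrm{ev}_d : \Func{\mathbb{D}}{\mathcal{C}} \to \mathcal{C}$ preserves all colimits, so $\Hom[\mathcal{C}]{F(d)}{\mathrm{ev}_d(\blank)}$ preserves $\mbfU$-small $\lambda$-filtered colimits. Since $\Hom[{\Func{\mathbb{D}}{\mathcal{C}}}]{F}{G} \cong \int_{d} \Hom[\mathcal{C}]{F(d)}{G(d)}$, and an end is a $\lambda$-small limit here — concretely the equalizer of the canonical pair of maps between $\prod_{d \in \ob \mathbb{D}} \Hom[\mathcal{C}]{F(d)}{G(d)}$ and $\prod_{(u : c \to d) \in \mor \mathbb{D}} \Hom[\mathcal{C}]{F(c)}{G(d)}$ — the functor $\Hom[{\Func{\mathbb{D}}{\mathcal{C}}}]{F}{\blank}$ is built by a $\lambda$-small limit from functors of the form $\Hom[\mathcal{C}]{F(c)}{\mathrm{ev}_d(\blank)}$, each of which preserves $\mbfU$-small $\lambda$-filtered colimits. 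As such colimits commute with $\lambda$-small limits in $\cat{\Set}$, it follows that $\Hom[{\Func{\mathbb{D}}{\mathcal{C}}}]{F}{\blank}$ preserves $\mbfU$-small $\lambda$-filtered colimits, which is to say that $F$ is $\tuple{\lambda, \mbfU}$-compact.

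Conversely, suppose $F$ is $\tuple{\lambda, \mbfU}$-compact in $\Func{\mathbb{D}}{\mathcal{C}}$ and fix an object $d$ of $\mathbb{D}$. The functor $\mathrm{ev}_d$ has a right adjoint $R_d$, the right Kan extension along the functor $\mathbf{1} \to \mathbb{D}$ with value $d$, given componentwise by the $\lambda$-small product $R_d(X)(c) = X^{\mathbb{D}(c, d)}$. Because $\mbfU$-small $\lambda$-filtered colimits commute with $\lambda$-small products in $\mathcal{C}$, the functor $R_d$ preserves $\mbfU$-small $\lambda$-filtered colimits; therefore its left adjoint $\mathrm{ev}_d$ carries $\tuple{\lambda, \mbfU}$-compact objects to $\tuple{\lambda, \mbfU}$-compact objects, since for any $\mbfU$-small $\lambda$-filtered diagram $C : \mathbb{E} \to \mathcal{C}$ there are natural isomorphisms $\Hom[\mathcal{C}]{\mathrm{ev}_d F}{\indlim_{\mathbb{E}} C} \cong \Hom[{\Func{\mathbb{D}}{\mathcal{C}}}]{F}{R_d \indlim_{\mathbb{E}} C} \cong \Hom[{\Func{\mathbb{D}}{\mathcal{C}}}]{F}{\indlim_{\mathbb{E}} R_d C} \cong \indlim_{\mathbb{E}} \Hom[{\Func{\mathbb{D}}{\mathcal{C}}}]{F}{R_d C} \cong \indlim_{\mathbb{E}} \Hom[\mathcal{C}]{\mathrm{ev}_d F}{C}$, using in turn $\mathrm{ev}_d \dashv R_d$, that $R_d$ preserves the colimit, the compactness of $F$, and $\mathrm{ev}_d \dashv R_d$ once more. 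Hence $F(d) = \mathrm{ev}_d F$ is $\tuple{\lambda, \mbfU}$-compact, and the two directions together establish the claim.

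The only real content is the cardinal bookkeeping. The hypothesis $\lambda \ge \kappa$ makes $\mathcal{C}$ and $\Func{\mathbb{D}}{\mathcal{C}}$ locally $\lambda$-presentable, thereby supplying the two commutation statements; the hypothesis $\lambda \ge \mu$ is exactly what guarantees that the end computing $\Hom[{\Func{\mathbb{D}}{\mathcal{C}}}]{F}{\blank}$ and the products $X^{\mathbb{D}(c, d)}$ defining $R_d$ are $\lambda$-small. The pointwise descriptions of $\mathrm{ev}_d$ and $R_d$, and the equalizer presentation of an end, are routine, so I anticipate no serious obstacle beyond arranging these pieces in the right order.
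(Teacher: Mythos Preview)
Your proof is correct and follows essentially the same strategy as the paper: both directions rest on the end formula for $\Hom[\Func{\mathbb{D}}{\mathcal{C}}]{F}{\blank}$ as a $\lambda$-small limit, and on the right adjoint $R_d = d_*$ of evaluation being $\tuple{\lambda, \mbfU}$-accessible (via \autoref{cor:filtered.colimits.preserve.tiny.limits}, since $\card{\mathbb{D}(c,d)} < \mu \le \lambda$). The only difference is in packaging the second direction: the paper writes $d^* A$ explicitly as a $\lambda$-filtered colimit of $\tuple{\lambda, \mbfU}$-compact objects, pushes it through $d_*$, factors the unit $\eta_A$ through a stage, and extracts a retract via the triangle identity, whereas you invoke directly the general principle (essentially the argument of (i)\,$\Rightarrow$\,(ii) in \autoref{thm:accessible.adjoints}) that a left adjoint whose right adjoint preserves $\lambda$-filtered colimits sends $\lambda$-compact objects to $\lambda$-compact objects. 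Your route is slightly cleaner; the paper's is more explicit about where the retract comes from.
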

\begin{proof}
First, note that Mac Lane's subdivision category\footnote{See \citep[\Chap IX, \Sect 5]{CWM}.} $\subdiv{\mathbb{D}}$ is also $\mu$-small, so $\Hom[\Func{\mathbb{D}}{\mathcal{C}}]{A}{B}$ is computed as the limit of a $\mu$-small diagram of hom-sets. More precisely, using end notation,
\[
\Hom[\Func{\mathbb{D}}{\mathcal{C}}]{A}{B} \cong \int_{d : \mathbb{D}} \Hom[\mathcal{C}]{A d}{B d}
\]
and so if $A$ is componentwise $\tuple{\lambda, \mbfU}$-compact, then $\Hom[\Func{\mathbb{D}}{\mathcal{C}}]{A}{\blank}$ preserves colimits for $\mbfU$-small $\lambda$-filtered diagrams, hence $A$ is itself $\tuple{\lambda, \mbfU}$-compact.

Now, suppose $A$ is a $\tuple{\lambda, \mbfU}$-compact object in $\Func{\mathbb{D}}{\mathcal{C}}$. Let $d$ be an object in $\mathbb{D}$, let $d^* : \Func{\mathbb{D}}{\mathcal{C}} \to \mathcal{C}$ be evaluation at $d$, and let $d_* : \mathcal{C} \to \Func{\mathbb{D}}{\mathcal{C}}$ be the right adjoint, which is explicitly given by
\[
\parens{d_* C} \argp{d'} = \Hom[\mathbb{D}]{d'}{d} \cotens C
\]
where $\cotens$ is defined by following adjunction:
\[
\Hom[\Set]{X}{\Hom[\mathcal{C}]{C'}{C}} \cong \Hom[\mathcal{C}]{C}{X \cotens C}
\]
The unit $\eta_A : A \to d_* d^* A$ is constructed using the universal property of $\cotens$ in the obvious way, and the counit $\epsilon_C : d^* d_* C \to C$ is the projection $\Hom[\mathbb{D}]{d}{d} \cotens C \to C$ corresponding to $\id_d \in \Hom[\mathbb{D}]{d}{d}$. Since $\mathcal{C}$ is a locally $\lambda$-presentable $\mbfU$-category, there exist a $\mbfU$-small $\lambda$-filtered diagram $B : \mathbb{J} \to \mathcal{C}$ consisting of $\tuple{\lambda, \mbfU}$-compact objects in $\mathcal{C}$ and a colimiting cocone $\alpha : B \hoto \Delta d^* A$, and since each $\Hom[\mathbb{D}]{d'}{d}$ has cardinality less than $\mu$, the cocone $d_* \alpha : d_* B \hoto \Delta d_* d^* A$ is also colimiting, by \autoref{cor:filtered.colimits.preserve.tiny.limits}. \Autoref{lem:elementary.condition.for.compactness} then implies $\eta_A : A \to d_* d^* A$ factors through $d_* \alpha_j : d_* \argp{B j} \to d_* d^* A$ for some $j$ in $\mathbb{J}$, say
\[
\eta_A = d_* \alpha_j \circ \sigma
\]
for some $\sigma : A \to d_* B j$. But then, by the triangle identity,
\[
\id_{A d}
= \epsilon_{A d} \circ d^* \eta_A 
= \epsilon_{A d} \circ d^* d_* \alpha_j \circ d^* \sigma 
= \alpha_j \circ \epsilon_{B j} \circ d^* \sigma
\]
and so $\alpha_j : B j \to A d$ is a split epimorphism, hence $A d$ is a $\tuple{\lambda, \mbfU}$-compact object, by \autoref{cor:retracts.of.compact.objects}.
\end{proof}

\begin{remark}
The claim in the above proposition can fail if $\mu > \lambda \ge \kappa$. For example, we could take $\mathcal{C} = \cat{\Set}$, with $\mathbb{D}$ being the set $\omega$ considered as a discrete category; then the terminal object in $\Func{\mathbb{D}}{\cat{\Set}}$ is componentwise finite, but is not itself an $\aleph_0$-compact object in $\Func{\mathbb{D}}{\cat{\Set}}$.
\end{remark}

\begin{lem}
Let $\kappa$ and $\lambda$ be regular cardinals in a universe $\mbfU$, with $\kappa \le \lambda$.
\begin{enumerate}[(i)]
\item If $\mathcal{D}$ is a locally $\lambda$-presentable $\mbfU$-category, $\mathcal{C}$ is a locally $\mbfU$-small category, and $G : \mathcal{D} \to \mathcal{C}$ is a $\tuple{\lambda, \mbfU}$-accessible functor that preserves limits for all $\mbfU$-small diagrams in $\mathcal{C}$, then, for any $\tuple{\kappa, \mbfU}$-compact object $C$ in $\mathcal{C}$, the comma category $\commacat{C}{G}$ has an initial object.

\item If $\mathcal{C}$ is a locally $\kappa$-presentable $\mbfU$-category, $\mathcal{D}$ is a locally $\mbfU$-small category, and $F : \mathcal{C} \to \mathcal{D}$ is a functor that preserves colimits for all $\mbfU$-small diagrams in $\mathcal{C}$, then, for any object $D$ in $\mathcal{D}$, the comma category $\commacat{F}{D}$ has a terminal object.
\end{enumerate}
\end{lem}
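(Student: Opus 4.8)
The plan is, in each case, to present the comma category as a $\mbfU$-complete (resp.\ $\mbfU$-cocomplete) locally $\mbfU$-small category carrying a weakly initial (resp.\ weakly terminal) $\mbfU$-small family of objects, and then to apply Freyd's construction of an initial (resp.\ terminal) object from such data. I treat part (i) in detail and then indicate the dual for part (ii).

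\emph{Part (i).} A morphism in $\commacat{C}{G}$ from $(D, f)$ to $(D', f')$ is a morphism $D \to D'$ in $\mathcal{D}$ subject to a compatibility condition, so $\commacat{C}{G}$ is locally $\mbfU$-small; and since $\mathcal{D}$ is $\mbfU$-complete (a locally presentable $\mbfU$-category is $\mbfU$-complete by \autoref{thm:classification.of.locally.presentable.categories}) and $G$ preserves $\mbfU$-small limits, $\commacat{C}{G}$ has all $\mbfU$-small limits, formed by taking the limit in $\mathcal{D}$ together with the canonically induced morphism out of $C$. For the weakly initial family, note first that $C$ is $\tuple{\lambda, \mbfU}$-compact by \autoref{lem:compactness.rank}(ii). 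Put $\mathcal{E} = \Kompakt[\lambda][\mbfU]{\mathcal{D}}$, which is essentially $\mbfU$-small. Given an object $(D, f)$ of $\commacat{C}{G}$, write $D$ as the colimit of the canonical $\mbfU$-small $\lambda$-filtered diagram $\commacat{\mathcal{E}}{D} \to \mathcal{D}$ (this is where $\lambda$-accessibility of $\mathcal{D}$ is used); as $G$ preserves this colimit and $C$ is $\tuple{\lambda, \mbfU}$-compact, the morphism $f : C \to G D$ factors as $G(\ell) \circ f'$ through the image under $G$ of some leg $\ell : E \to D$ of the colimiting cocone, so that $\ell$ is a morphism $(E, f') \to (D, f)$ of $\commacat{C}{G}$ with $E$ in $\mathcal{E}$. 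Hence a $\mbfU$-set of representatives for the objects of the essentially $\mbfU$-small comma category $\commacat{C}{G|_{\mathcal{E}}}$ is weakly initial in $\commacat{C}{G}$, and taking the product $W$ of this family followed by the equaliser of the $\mbfU$-small family of all endomorphisms of $W$ (which exists since $\commacat{C}{G}$ is locally $\mbfU$-small) produces an initial object of $\commacat{C}{G}$.

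\emph{Part (ii).} Dually, $\commacat{F}{D}$ is locally $\mbfU$-small, and it is $\mbfU$-cocomplete because $\mathcal{C}$ is $\mbfU$-cocomplete and $F$ preserves $\mbfU$-small colimits. Put $\mathbb{B} = \Kompakt[\kappa][\mbfU]{\mathcal{C}}$, which is essentially $\mbfU$-small, and let $P$ be the comma category whose objects are the pairs $(B, g : F B \to D)$ with $B$ in $\mathbb{B}$; it too is essentially $\mbfU$-small. Let $C_0 = \indlim_{(B, g) \in P} B$, a colimit in $\mathcal{C}$, and let $f_0 : F C_0 \to D$ be the morphism induced, via the identification $F C_0 = \indlim_{(B, g) \in P} F B$, by the cocone $\set{g : F B \to D}_{(B, g) \in P}$; then $(C_0, f_0)$ is an object of $\commacat{F}{D}$. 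For any $(C, f)$, write $C$ as the colimit of the canonical $\mbfU$-small $\kappa$-filtered diagram $\commacat{\mathbb{B}}{C} \to \mathcal{C}$; each object $(B, c)$ of $\commacat{\mathbb{B}}{C}$ determines an object $(B, f \circ F c)$ of $P$, and a morphism of $\commacat{\mathbb{B}}{C}$ is also one of $P$ in the evident way, so the associated colimit legs $B \to C_0$ assemble into a morphism $\phi : C \to C_0$, and a check on colimit components shows $f_0 \circ F \phi = f$, \ie $\phi$ is a morphism $(C, f) \to (C_0, f_0)$ of $\commacat{F}{D}$. Thus $(C_0, f_0)$ is weakly terminal, and the joint coequaliser of all its endomorphisms is a terminal object of $\commacat{F}{D}$. (The coequaliser step can be dispensed with: since $\mathcal{C}$ is locally $\kappa$-presentable, $\mathbb{B}$ has $\kappa$-small colimits, and $F$ preserves them; hence $P$ has $\kappa$-small colimits and is therefore $\kappa$-filtered, and a short factorisation argument through the colimit legs, using that the objects of $\mathbb{B}$ are $\tuple{\kappa, \mbfU}$-compact, shows that any two morphisms into $(C_0, f_0)$ coincide, so $(C_0, f_0)$ is already terminal.)

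The point requiring care is the verification of the weakly initial/terminal condition — namely the factorisation of a map out of a compact object through a stage of an accessibility-witnessing filtered colimit — together with the bookkeeping that keeps $\commacat{C}{G|_{\mathcal{E}}}$, $P$, and the endomorphism sets appearing all $\mbfU$-small, so that the Freyd construction is legitimate; both are routine given \autoref{lem:compactness.rank}, the classification theorems, and the pointwise description of (co)limits in comma categories.
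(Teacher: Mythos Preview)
Your proof is correct and follows essentially the same route as the paper's: in both parts you identify the same weakly initial/terminal family (objects of the comma category lying over $\tuple{\lambda,\mbfU}$- resp.\ $\tuple{\kappa,\mbfU}$-compact objects) and then invoke Freyd's (co)complete initial/terminal object construction. Your parenthetical in part~(ii), showing that $(C_0, f_0)$ is already terminal because $P$ is $\kappa$-filtered, is a pleasant sharpening that the paper does not record.
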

\begin{proof}
(i). Let $\mathcal{F}$ be the full subcategory of $\commacat{C}{G}$ spanned by those $\tuple{D, g}$ where $D$ is a $\tuple{\lambda, \mbfU}$-compact object in $\mathcal{D}$. $G$ preserves colimits for all $\mbfU$-small $\lambda$-filtered diagrams, so, by \autoref{lem:elementary.condition.for.compactness}, $\mathcal{F}$ must be a weakly initial family in $\commacat{C}{G}$. \Autoref{prop:compact.objects.in.accessible.categories} implies $\mathcal{F}$ is an essentially $\mbfU$-small category, and since $\mathcal{D}$ has limits for all $\mbfU$-small diagrams and $G$ preserves them, $\commacat{C}{G}$ is also $\mbfU$-complete. Thus,  the inclusion $\mathcal{F} \embedinto \commacat{C}{G}$ has a limit, and it can be shown that this is an initial object in $\commacat{C}{G}$.\footnote{See Theorem 1 in \citep[\Chap X, \Sect 2]{CWM}.}

\bigskip\noindent
(ii). Let $\mathcal{G}$ be the full subcategory of $\commacat{F}{D}$ spanned by those $\tuple{C, f}$ where $C$ is a $\tuple{\kappa, \mbfU}$-compact object in $\mathcal{C}$; note that \autoref{prop:compact.objects.in.accessible.categories} implies $\mathcal{G}$ is an essentially $\mbfU$-small category. Since $\mathcal{C}$ has colimits for all $\mbfU$-small diagrams and $F$ preserves them, $\commacat{F}{D}$ is also $\mbfU$-cocomplete.\footnote{See the Lemma in \citep[\Chap V, \Sect 6]{CWM}.} Let $\tuple{C, f}$ be a colimit for the inclusion $\mathcal{G} \embedinto \commacat{F}{D}$. It is not hard to check that $\tuple{C, f}$ is a weakly terminal object in $\commacat{F}{D}$, so the formal dual of Freyd's initial object lemma\footnote{See Theorem 1 in \citep[\Chap V, \Sect 6]{CWM}.} gives us a terminal object in $\commacat{F}{D}$; explicitly, it may be constructed as the joint coequaliser of all the endomorphisms of $\tuple{C, f}$.
\end{proof}

\begin{thm}[Accessible adjoint functor theorem]
\label{thm:accessible.adjoints}
Let $\kappa$ and $\lambda$ be regular cardinals in a universe $\mbfU$, with $\kappa \le \lambda$, let $\mathcal{C}$ be a locally $\kappa$-presentable $\mbfU$-category, and let $\mathcal{D}$ be a locally $\lambda$-presentable $\mbfU$-category.

Given a functor $F : \mathcal{C} \to \mathcal{D}$, the following are equivalent:
\begin{enumerate}[(i)]
\item $F$ has a right adjoint $G : \mathcal{D} \to \mathcal{C}$, and $G$ is a $\tuple{\lambda, \mbfU}$-accessible functor.

\item $F$ preserves colimits for all $\mbfU$-small diagrams and sends $\tuple{\kappa, \mbfU}$-compact objects in $\mathcal{C}$ to $\tuple{\lambda, \mbfU}$-compact objects in $\mathcal{D}$.

\item $F$ has a right adjoint and sends $\tuple{\kappa, \mbfU}$-compact objects in $\mathcal{C}$ to $\tuple{\lambda, \mbfU}$-\allowhyphens compact objects in $\mathcal{D}$. 
\end{enumerate}
On the other hand, given a functor $G : \mathcal{D} \to \mathcal{C}$, the following are equivalent:
\begin{enumerate}[resume*]
\item $G$ has a left adjoint $F : \mathcal{C} \to \mathcal{D}$, and $F$ sends $\tuple{\kappa, \mbfU}$-compact objects in $\mathcal{C}$ to $\tuple{\lambda, \mbfU}$-compact objects in $\mathcal{D}$.

\item $G$ is a $\tuple{\lambda, \mbfU}$-accessible functor and preserves limits for all $\mbfU$-small diagrams.

\item $G$ is a $\tuple{\lambda, \mbfU}$-accessible functor and there is a functor $F_0 : \Kompakt[\kappa][\mbfU]{\mathcal{C}} \to \mathcal{D}$ with hom-set bijections
\[
\Hom[\mathcal{C}]{C}{G D} \cong \Hom[\mathcal{D}]{F_0 C}{D}
\]
natural in $D$ for each $\tuple{\kappa, \mbfU}$-compact object $C$ in $\mathcal{C}$, where $D$ varies over the objects in $\mathcal{D}$.
\end{enumerate}
\end{thm}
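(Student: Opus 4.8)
The plan is to prove the two cyclic chains of implications, isolating first a single observation that does most of the work. \emph{Key fact:} whenever $F : \mathcal{C} \to \mathcal{D}$ has a right adjoint $G : \mathcal{D} \to \mathcal{C}$ (with $\mathcal{C}$, $\mathcal{D}$, $\kappa$, $\lambda$ as in the theorem), the two conditions ``$F$ sends $\tuple{\kappa, \mbfU}$-compact objects to $\tuple{\lambda, \mbfU}$-compact objects'' and ``$G$ is a $\tuple{\lambda, \mbfU}$-accessible functor'' are equivalent. To see this, fix a $\tuple{\kappa, \mbfU}$-compact object $C$ in $\mathcal{C}$ — which is $\tuple{\lambda, \mbfU}$-compact too, by \autoref{lem:compactness.rank} — and a $\mbfU$-small $\lambda$-filtered diagram $B : \mathbb{J} \to \mathcal{D}$; applying $\Hom[\mathcal{C}]{C}{\blank}$ to the canonical comparison $\indlim_{\mathbb{J}} G B \to G\parens{\indlim_{\mathbb{J}} B}$ and transposing twice across the adjunction identifies it, naturally, with the canonical comparison $\indlim_{\mathbb{J}} \Hom[\mathcal{D}]{F C}{B} \to \Hom[\mathcal{D}]{F C}{\indlim_{\mathbb{J}} B}$. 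Since the $\tuple{\kappa, \mbfU}$-compact objects jointly reflect isomorphisms in $\mathcal{C}$ — the restricted Yoneda embedding of \autoref{thm:classification.of.locally.presentable.categories} being fully faithful — the former comparison is invertible for every $C$ and $B$ exactly when the latter is, that is, exactly when each $F C$ is $\tuple{\lambda, \mbfU}$-compact and, on the other hand, exactly when $G$ preserves all $\mbfU$-small $\lambda$-filtered colimits.

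For the equivalence concerning $F$ I would run (i) $\Rightarrow$ (iii) $\Rightarrow$ (ii) $\Rightarrow$ (i). The step (i) $\Rightarrow$ (iii) is immediate from the key fact (the right-adjoint clause of (iii) being part of (i)), and (iii) $\Rightarrow$ (ii) holds because any left adjoint preserves all colimits. The real content is (ii) $\Rightarrow$ (i): $\mathcal{D}$ is in particular locally $\mbfU$-small and $F$ preserves $\mbfU$-small colimits, so part (ii) of the preceding lemma produces a terminal object in every comma category $\commacat{F}{D}$, which is exactly what it takes for $F$ to have a right adjoint $G$; the key fact then upgrades the standing hypothesis ``$F$ preserves $\tuple{\kappa, \mbfU}$-compactness'' to ``$G$ is $\tuple{\lambda, \mbfU}$-accessible''.

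For the equivalence concerning $G$ I would run (iv) $\Rightarrow$ (v) $\Rightarrow$ (vi) $\Rightarrow$ (iv). In (iv) $\Rightarrow$ (v): a right adjoint preserves all limits, and $G$ is $\tuple{\lambda, \mbfU}$-accessible by the key fact. In (v) $\Rightarrow$ (vi): part (i) of the preceding lemma applies to $G$, yielding for each $\tuple{\kappa, \mbfU}$-compact $C$ an initial object $\tuple{F_0 C, \eta_C}$ of $\commacat{C}{G}$, whose universal property is the wanted bijection $\Hom[\mathcal{C}]{C}{G D} \cong \Hom[\mathcal{D}]{F_0 C}{D}$ natural in $D$ — and uniqueness of initial objects makes $F_0$ functorial and the bijection natural in $C$ as well. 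In (vi) $\Rightarrow$ (iv): first, $F_0 C$ represents $\Hom[\mathcal{C}]{C}{G\blank}$, and since $G$ is $\tuple{\lambda, \mbfU}$-accessible and $C$ is $\tuple{\lambda, \mbfU}$-compact this functor preserves $\mbfU$-small $\lambda$-filtered colimits, so $F_0 C$ is $\tuple{\lambda, \mbfU}$-compact; next, identify $\mathcal{C}$ with $\Ind[\kappa][\mbfU]{\Kompakt[\kappa][\mbfU]{\mathcal{C}}}$ via \autoref{thm:classification.of.accessible.categories} and invoke the universal property of ind-completions (\autoref{thm:ind-completion}; legitimate since $\mathcal{D}$ is $\mbfU$-cocomplete, hence has all $\mbfU$-small $\kappa$-filtered colimits) to extend $F_0$ to a functor $F : \mathcal{C} \to \mathcal{D}$ preserving $\mbfU$-small $\kappa$-filtered colimits; finally, writing an arbitrary object of $\mathcal{C}$ as a $\mbfU$-small $\kappa$-filtered colimit $\indlim_i C_i$ of $\tuple{\kappa, \mbfU}$-compacts, the chain
\begin{align*}
\Hom[\mathcal{C}]{\indlim_i C_i}{G D}
&\cong \prolim_i \Hom[\mathcal{C}]{C_i}{G D}
\cong \prolim_i \Hom[\mathcal{D}]{F_0 C_i}{D} \\
&\cong \Hom[\mathcal{D}]{\indlim_i F_0 C_i}{D}
\cong \Hom[\mathcal{D}]{F\parens{\indlim_i C_i}}{D}
\end{align*}
exhibits $F \dashv G$, and $F$ sends $\tuple{\kappa, \mbfU}$-compacts to $\tuple{\lambda, \mbfU}$-compacts because it restricts to $F_0$ on $\Kompakt[\kappa][\mbfU]{\mathcal{C}}$.

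I expect the main obstacle to be making the key fact watertight: one must check that the adjunction transposes are natural enough — in $C$, in $D$, and in the filtered diagram $B$ — that the two canonical comparison maps are genuinely identified, not merely their source and target hom-sets, so that the reflection-of-isomorphisms step really does the job. The same naturality housekeeping reappears, more gently, when upgrading the pointwise adjoint $F_0$ to an honest adjunction in (vi) $\Rightarrow$ (iv). Past these points, everything is a matter of quoting the preceding lemma, the classification theorems, and the universal property of ind-completions.
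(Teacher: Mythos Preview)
Your proposal is correct and follows essentially the same approach as the paper: both arguments hinge on the hom-set manipulations across the adjunction together with the preceding lemma supplying initial/terminal objects in the relevant comma categories, and both extend $F_0$ via \autoref{thm:ind-completion} in the step (vi) $\Rightarrow$ (iv). The only organisational difference is that you isolate upfront the equivalence ``$F$ preserves compactness $\Leftrightarrow$ $G$ is $\tuple{\lambda, \mbfU}$-accessible'' as a single \emph{key fact}, whereas the paper proves its two directions separately inside the implications (i) $\Rightarrow$ (ii) and (iii) $\Rightarrow$ (i), and accordingly runs the first cycle in the order (i) $\Rightarrow$ (ii) $\Rightarrow$ (iii) $\Rightarrow$ (i) rather than your (i) $\Rightarrow$ (iii) $\Rightarrow$ (ii) $\Rightarrow$ (i); the content is the same.
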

\begin{proof}
We will need to refer back to the details of the proof of this theorem later, so here is a sketch of the constructions involved.

\bigskip\noindent
(i) \implies (ii). If $F$ is a left adjoint, then $F$ certainly preserves colimits for all $\mbfU$-small diagrams. Given a $\tuple{\kappa, \mbfU}$-compact object $C$ in $\mathcal{C}$ and a $\mbfU$-small $\lambda$-filtered diagram $B : \mathbb{J} \to \mathcal{D}$, observe that
\begin{multline*}
\Hom[\mathcal{D}]{F C}{\textstyle \indlim_\mathbb{J} B}
  \cong \Hom[\mathcal{C}]{C}{\textstyle G \indlim_\mathbb{J} B} 
  \cong \Hom[\mathcal{C}]{C}{\textstyle \indlim_\mathbb{J} G B} \\
  \cong \textstyle \indlim_\mathbb{J} \Hom[\mathcal{C}]{C}{G B} 
  \cong \textstyle \indlim_\mathbb{J} \Hom[\mathcal{C}]{F C}{B}
\end{multline*}
and thus $F C$ is indeed a $\tuple{\lambda, \mbfU}$-compact object in $\mathcal{D}$ .

\bigskip\noindent
(ii) \implies (iii). It is enough to show that, for each object $D$ in $\mathcal{D}$, the comma category $\commacat{F}{D}$ has a terminal object $\tuple{G D, \epsilon_D}$;\footnote{See Theorem 2 in \citep[\Chap IV, \Sect 1]{CWM}.} but this was done in the previous lemma.

\bigskip\noindent
(iii) \implies (i). Given a $\tuple{\kappa, \mbfU}$-compact object $C$ in $\mathcal{C}$ and a $\mbfU$-small $\lambda$-filtered diagram $B : \mathbb{J} \to \mathcal{D}$, observe that
\begin{multline*}
\Hom[\mathcal{C}]{C}{\textstyle \textstyle G \indlim_\mathbb{J} B} 
  \cong \Hom[\mathcal{D}]{F C}{\textstyle \indlim_\mathbb{J} B}
  \cong \textstyle \indlim_\mathbb{J} \Hom[\mathcal{C}]{F C}{B} \\ 
  \cong \textstyle \indlim_\mathbb{J} \Hom[\mathcal{C}]{C}{G B}
  \cong \Hom[\mathcal{C}]{C}{\textstyle \indlim_\mathbb{J} G B}
\end{multline*}
because $F C$ is a $\tuple{\lambda, \mbfU}$-compact object in $\mathcal{D}$; but \autoref{thm:classification.of.locally.presentable.categories} says the restricted Yoneda embedding $\mathcal{C} \to \Func{\op{\Kompakt[\kappa][\mbfU]{\mathcal{C}}}}{\cat{\Set}}$ is fully faithful, so this is enough to conclude that $G$ preserves colimits for $\mbfU$-small $\lambda$-filtered diagrams.

\bigskip\noindent
(iv) \implies (v). If $G$ is a right adjoint, then $G$ certainly preserves limits for all $\mbfU$-small diagrams; the rest of this implication is just (iii) \implies (i). 

\bigskip\noindent
(v) \implies (vi). It is enough to show that, for each $\tuple{\kappa, \mbfU}$-compact object $C$ in $\mathcal{C}$, the comma category $\commacat{C}{G}$ has an initial object $\tuple{F_0 C, \eta_C}$; but this was done in the previous lemma. It is clear how to make $F_0$ into a functor $\Kompakt[\kappa][\mbfU]{\mathcal{C}} \to \mathcal{D}$.

\bigskip\noindent
(vi) \implies (iv). We use theorems \ref{thm:ind-completion} and \ref{thm:classification.of.locally.presentable.categories} to extend $F_0 : \Kompakt[\kappa][\mbfU]{\mathcal{C}} \to \mathcal{D}$ along the inclusion $\Kompakt[\kappa][\mbfU]{\mathcal{C}} \embedinto \mathcal{C}$ to get $\tuple{\kappa, \mbfU}$-accessible functor $F : \mathcal{C} \to \mathcal{D}$. We then observe that, for any $\mbfU$-small $\kappa$-filtered diagram $A : \mathbb{I} \to \mathcal{C}$ of $\tuple{\kappa, \mbfU}$-compact objects in $\mathcal{C}$,
\begin{multline*}
\Hom[\mathcal{C}]{\textstyle \indlim_{\mathbb{I}} A}{G D}
  \cong \textstyle \prolim_{\mathbb{I}} \Hom[\mathcal{C}]{A}{G D}
  \cong \textstyle \prolim_{\mathbb{I}} \Hom[\mathcal{C}]{F_0 A}{D} \\
  \cong \Hom[\mathcal{C}]{\textstyle \indlim_{\mathbb{I}} F A}{D}
  \cong \Hom[\mathcal{C}]{\textstyle F \indlim_{\mathbb{I}} A}{D}
\end{multline*}
is a series of bijections natural in $D$, where $D$ varies in $\mathcal{D}$; but $\mathcal{C}$ is a locally $\kappa$-presentable $\mbfU$-category, so this is enough to show that $F$ is a left adjoint of $G$. The remainder of the claim is a corollary of (i) \implies (ii).
\end{proof}
\section{Change of universe}

Many of the universal properties of interest concern adjunctions, so that is where we begin.

\begin{dfn}
Let $F \dashv G : \mathcal{D} \to \mathcal{C}$ and $F' \dashv G' : \mathcal{D}' \to \mathcal{C}'$ be adjunctions, and let $H : \mathcal{C} \to \mathcal{C}'$ and $K : \mathcal{D} \to \mathcal{D}'$ be functors. The \strong{mate} of a natural transformation $\alpha : H G \hoto G' K$ is the natural transformation
\[
\epsilon' K F \bcirc F' \alpha F \bcirc F' H \eta : F' H \hoto K F
\]
where $\eta : \id_{\mathcal{C}} \hoto G F$ is the unit of $F \dashv G$ and $\epsilon' : F' G' \hoto \id_{\mathcal{D}}$ is the counit of $F' \dashv G'$; dually, the \strong{mate} of a natural transformation $\beta : F' H \hoto K F$ is the natural transformation
\[
G' K \epsilon \bcirc G' \beta G \bcirc \eta' H G : H G \hoto G' K
\]
where $\eta' : \id_{\mathcal{C}'} \hoto G' F'$ is the unit of $F' \dashv G'$ and $\epsilon : F G \hoto \id_{\mathcal{D}}$ is the counit of $F \dashv G$.
\end{dfn}

\begin{dfn}
Given a diagram of the form
\[
\begin{tikzpicture}[commutative diagrams/every diagram]
\matrix[matrix of math nodes, name=m]{
	\mathcal{D} &
	\mathcal{D}' \\
	\mathcal{C} &
	\mathcal{C}' \\
};
\path[commutative diagrams/.cd, every arrow, every label]
	(m-1-1) edge node[swap] {$G$} (m-2-1)
	(m-1-1) edge node {$K$} (m-1-2)
	(m-1-2) edge node {$G'$} (m-2-2)
	(m-2-1) edge node[swap] {$H$} (m-2-2);
\path (m-2-1) -- (m-1-2)
	node[near start](a-1){} node[near end](b-1){};
\path[commutative diagrams/.cd, every arrow, every label]
	(a-1) edge[commutative diagrams/Rightarrow] node{$\alpha$} (b-1);
\end{tikzpicture}
\]
where $\alpha : H G \hoto G' K$ is a natural isomorphism, $F \dashv G$ and $F' \dashv G'$, we say the diagram satisfies the \strong{left Beck--Chevalley condition} if the mate of $\alpha$ is also a natural isomorphism. Dually, given a diagram of the form
\[
\begin{tikzpicture}[commutative diagrams/every diagram]
\matrix[matrix of math nodes, name=m]{
	\mathcal{C} &
	\mathcal{C}' \\
	\mathcal{D} &
	\mathcal{D}' \\
};
\path[commutative diagrams/.cd, every arrow, every label]
	(m-1-1) edge node[swap] {$F$} (m-2-1)
	(m-1-1) edge node {$H$} (m-1-2)
	(m-1-2) edge node {$F'$} (m-2-2)
	(m-2-1) edge node[swap] {$K$} (m-2-2);
\path (m-1-2) -- (m-2-1)
	node[near start](a-1){} node[near end](b-1){};
\path[commutative diagrams/.cd, every arrow, every label]
	(a-1) edge[commutative diagrams/Rightarrow] node{$\beta$} (b-1);
\end{tikzpicture}
\]
where $\beta : F' H \hoto K F$ is a natural isomorphism, $F \dashv G$ and $F' \dashv G'$, we say the diagram satisfies the \strong{right Beck--Chevalley condition} if the mate of $\beta$ is also a natural isomorphism.
\end{dfn}

\begin{lem}
\label{lem:local.Beck-Chevalley.condition}
Given a diagram of the form
\[
\begin{tikzpicture}[commutative diagrams/every diagram]
\matrix[matrix of math nodes, name=m]{
	\mathcal{D} &
	\mathcal{D}' \\
	\mathcal{C} &
	\mathcal{C}' \\
};
\path[commutative diagrams/.cd, every arrow, every label]
	(m-1-1) edge node[swap] {$G$} (m-2-1)
	(m-1-1) edge node {$K$} (m-1-2)
	(m-1-2) edge node {$G'$} (m-2-2)
	(m-2-1) edge node[swap] {$H$} (m-2-2);
\path (m-2-1) -- (m-1-2)
	node[near start](a-1){} node[near end](b-1){};
\path[commutative diagrams/.cd, every arrow, every label]
	(a-1) edge[commutative diagrams/Rightarrow] node{$\alpha$} (b-1);
\end{tikzpicture}
\]
where $\alpha : H G \hoto G' K$ is a natural isomorphism, $F \dashv G$ and $F' \dashv G'$, the diagram satisfies the left Beck--Chevalley condition if and only if, for every object $C$ in $\mathcal{C}$, the functor $\commacat{C}{G} \to \commacat{H C}{G'}$ sending an object $\tuple{D, f}$ in the comma category $\commacat{C}{G}$ to the object $\tuple{K D, \alpha_D \circ H f}$ in $\commacat{H C}{G'}$ preserves initial objects.
\end{lem}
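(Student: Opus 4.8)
The plan is to reduce the left Beck--Chevalley condition to a pointwise statement by way of the standard fact that the unit of an adjunction supplies the initial object of a comma category. Recall that, for the adjunction $F \dashv G$ and any object $C$ of $\mathcal{C}$, the pair $\tuple{F C, \eta_C}$ is an initial object of $\commacat{C}{G}$: the adjunction bijection $\Hom[\mathcal{D}]{F C}{D} \cong \Hom[\mathcal{C}]{C}{G D}$, $g \mapsto G g \circ \eta_C$, says exactly that for each object $\tuple{D, f}$ there is a unique morphism $\tuple{F C, \eta_C} \to \tuple{D, f}$ in $\commacat{C}{G}$. The same remark applied to $F' \dashv G'$ shows that $\tuple{F' H C, \eta'_{H C}}$ is an initial object of $\commacat{H C}{G'}$.

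First I would note that, since functors preserve isomorphisms and the initial objects of a category form a single isomorphism class, the functor $\commacat{C}{G} \to \commacat{H C}{G'}$ of the statement preserves initial objects if and only if it carries the distinguished initial object $\tuple{F C, \eta_C}$ to an initial object of $\commacat{H C}{G'}$. By the very definition of that functor, its value at $\tuple{F C, \eta_C}$ is $\tuple{K F C, \alpha_{F C} \circ H \eta_C}$; and since $\commacat{H C}{G'}$ already has the initial object $\tuple{F' H C, \eta'_{H C}}$, this value is initial precisely when the unique morphism $\tuple{F' H C, \eta'_{H C}} \to \tuple{K F C, \alpha_{F C} \circ H \eta_C}$ in $\commacat{H C}{G'}$ is an isomorphism.

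The crux is then to recognise that unique morphism as the $C$-component $\beta_C$ of the mate $\beta = \epsilon' K F \bcirc F' \alpha F \bcirc F' H \eta$. By definition a morphism $\tuple{F' H C, \eta'_{H C}} \to \tuple{K F C, \alpha_{F C} \circ H \eta_C}$ is a map $v : F' H C \to K F C$ satisfying $G' v \circ \eta'_{H C} = \alpha_{F C} \circ H \eta_C$, so the unique such $v$ is the adjoint transpose $\epsilon'_{K F C} \circ F'\parens{\alpha_{F C} \circ H \eta_C}$ of $\alpha_{F C} \circ H \eta_C$ under $F' \dashv G'$; and unwinding the whiskered composites in the mate gives $\beta_C = \epsilon'_{K F C} \circ F'\alpha_{F C} \circ F' H \eta_C$, which is the very same arrow. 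Since a natural transformation is a natural isomorphism exactly when each of its components is invertible, it follows that the mate of $\alpha$ is a natural isomorphism if and only if the functor $\commacat{C}{G} \to \commacat{H C}{G'}$ preserves initial objects for every object $C$ of $\mathcal{C}$, which is the assertion. I expect the only mildly fiddly point to be the bookkeeping needed to match the three whiskered factors of $\epsilon' K F \bcirc F' \alpha F \bcirc F' H \eta$ against the transpose of $\alpha_{F C} \circ H \eta_C$; everything else is purely formal, the lemma being in essence a pointwise rendering of the slogan that a left adjoint is computed by the initial objects of the comma categories $\commacat{C}{G}$.
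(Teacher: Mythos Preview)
Your proof is correct and follows essentially the same route as the paper: identify $\tuple{F C, \eta_C}$ and $\tuple{F' H C, \eta'_{H C}}$ as initial objects of the respective comma categories, and then verify that the unique comparison morphism $F' H C \to K F C$ coincides with the $C$-component of the mate. The only cosmetic difference is that the paper obtains the formula $\beta_C = \epsilon'_{K F C} \circ F' \alpha_{F C} \circ F' H \eta_C$ via the triangle identity and naturality of $\epsilon'$, whereas you invoke the adjoint-transpose formula directly; these are equivalent manipulations.
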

\begin{proof}
We know $\tuple{F C, \eta_C}$ is an initial object of $\commacat{C}{G}$ and $\tuple{F' H C, \eta'_{H C}}$ is an initial object of $\commacat{H C}{G'}$, so there is a unique morphism $\beta_C : F' H C \to K F C$ such that $G' \beta_C \circ \eta'_{H C} = \alpha_{F C} \circ H \eta_C$. However, we observe that
\begin{align*}
\beta_C
& = \beta_C \circ \epsilon'_{F' H C} \circ F' \eta'_{H C} \\
& = \epsilon'_{K F C} \circ F' G' \beta_C \circ F' \eta'_{H C} \\
& = \epsilon'_{K F C} \circ F' \alpha_{F C} \circ F' H \eta_C
\end{align*}
so $\beta_C$ is precisely the component at $C$ of the mate of $\alpha$. Thus $\beta_C$ is an isomorphism for all $C$ if and only if the Beck--Chevalley condition holds.
\end{proof}

\begin{dfn}
Let $\kappa$ be a regular cardinal in a universe $\mbfU$, and let $\mbfUplus$ be a universe with $\mbfU \subseteq \mbfUplus$. A \strong{$\tuple{\kappa, \mbfU, \mbfUplus}$-accessible extension} is a $\tuple{\kappa, \mbfU}$-accessible functor $i : \mathcal{C} \to \succof{\mathcal{C}}$ such that
\begin{itemize}
\item $\mathcal{C}$ is a $\kappa$-accessible $\mbfU$-category,

\item $\succof{\mathcal{C}}$ is a $\kappa$-accessible $\mbfUplus$-category,

\item $i$ sends $\tuple{\kappa, \mbfU}$-compact objects in $\mathcal{C}$ to $\tuple{\kappa, \mbfUplus}$-compact objects in $\succof{\mathcal{C}}$, and

\item the functor $\Kompakt[\kappa][\mbfU]{\mathcal{C}} \to \Kompakt[\kappa][\mbfUplus]{\succof{\mathcal{C}}}$ so induced by $i$ is fully faithful and essentially surjective on objects.
\end{itemize}
\end{dfn}

\begin{remark}
Let $\mathbb{B}$ be a $\mbfU$-small category in which idempotents split. Then the $\tuple{\kappa, \mbfU}$-accessible functor $\Ind[\kappa][\mbfU]{\mathbb{B}} \to \Ind[\kappa][\mbfUplus]{\mathbb{B}}$ obtained by extending the embedding $\gamma^+ : \mathbb{B} \to \Ind[\kappa][\mbfUplus]{\mathbb{B}}$ along $\gamma : \mathbb{B} \to \Ind[\kappa][\mbfU]{\mathbb{B}}$ is a $\tuple{\kappa, \mbfU, \mbfUplus}$-accessible extension, by \autoref{prop:compact.objects.in.accessible.categories}. The classification theorem (\ref{thm:classification.of.accessible.categories}) implies all examples of $\tuple{\kappa, \mbfU, \mbfUplus}$-accessible extensions are essentially of this form.
\end{remark}

\begin{prop}
\label{prop:accessible.extension.properties}
\needspace{2.5\baselineskip}
Let $i : \mathcal{C} \to \succof{\mathcal{C}}$ be a $\tuple{\kappa, \mbfU, \mbfUplus}$-accessible extension.
\begin{enumerate}[(i)]
\item $\mathcal{C}$ is a locally $\kappa$-presentable $\mbfU$-category if and only if $\succof{\mathcal{C}}$ is a locally $\kappa$-pre\-sentable $\mbfUplus$-category.

\item The functor $i : \mathcal{C} \to \succof{\mathcal{C}}$ is fully faithful.

\item If $B : \mathcal{J} \to \mathcal{C}$ is any diagram (not necessarily $\mbfU$-small) and $\mathcal{C}$ has a limit for $B$, then $i$ preserves this limit.
\end{enumerate}
\end{prop}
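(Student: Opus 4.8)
The plan is to establish the three claims in order, with (iii) resting on (ii) and on the presentation of objects of an accessible category as $\kappa$-filtered colimits of compact ones. For (i) I would invoke the criterion that a $\kappa$-accessible $\mbfU$-category $\mathcal{C}$ is locally $\kappa$-presentable precisely when $\Kompakt[\kappa][\mbfU]{\mathcal{C}}$ has colimits for all $\kappa$-small diagrams, and likewise over $\mbfUplus$. One direction of this criterion is \autoref{lem:small.objects} together with $\mbfU$-cocompleteness; the other uses that, by \autoref{thm:classification.of.accessible.categories} and \autoref{prop:compact.objects.in.accessible.categories}, $\mathcal{C}$ is equivalent to $\Ind[\kappa][\mbfU]{\Kompakt[\kappa][\mbfU]{\mathcal{C}}}$ with $\Kompakt[\kappa][\mbfU]{\mathcal{C}}$ essentially $\mbfU$-small, so that \autoref{thm:classification.of.locally.presentable.categories} applies to a $\mbfU$-small category equivalent to $\Kompakt[\kappa][\mbfU]{\mathcal{C}}$. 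Since the definition of an accessible extension makes $i$ induce an equivalence $\Kompakt[\kappa][\mbfU]{\mathcal{C}} \to \Kompakt[\kappa][\mbfUplus]{\succof{\mathcal{C}}}$, one of these categories of compact objects has all $\kappa$-small colimits precisely when the other does, which is (i).

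For (ii) I would argue by dévissage, using throughout that $i$ preserves $\mbfU$-small $\kappa$-filtered colimits. The crucial special case is that $i$ is fully faithful on morphisms whose domain is a $\tuple{\kappa,\mbfU}$-compact object $A_0$: presenting an arbitrary object $B$ of $\mathcal{C}$ as $\indlim_{\mathbb{J}} B_{\bullet}$ with each $B_j$ being $\tuple{\kappa,\mbfU}$-compact, so that $i B = \indlim_{\mathbb{J}} i B_{\bullet}$, and using that $i A_0$ is $\tuple{\kappa,\mbfUplus}$-compact, \autoref{lem:elementary.condition.for.compactness} lets one factor any morphism $i A_0 \to i B$, and compare any two parallel such morphisms, through morphisms between objects that are images under $i$ of $\tuple{\kappa,\mbfU}$-compact objects of $\mathcal{C}$ --- on which $i$ is fully faithful by hypothesis --- whence $\mathcal{C}(A_0, B) \to \succof{\mathcal{C}}(i A_0, i B)$ is a bijection. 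For arbitrary $A$, present $A = \indlim_{\mathbb{I}} A_{\bullet}$ with each $A_i$ being $\tuple{\kappa,\mbfU}$-compact; then $\mathcal{C}(A, B)$ is the set of cocones over $A_{\bullet}$ with apex $B$, while $\succof{\mathcal{C}}(i A, i B)$ is the set of cocones over $i A_{\bullet}$ with apex $i B$ (because $i$ preserves the colimit $A = \indlim_{\mathbb{I}} A_{\bullet}$), and the comparison map is induced componentwise by the special case, hence is a bijection.

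For (iii), let $\lambda : \Delta L \hoto B$ be the given limiting cone in $\mathcal{C}$; the claim is that $i\lambda : \Delta(i L) \hoto i B$ is limiting in $\succof{\mathcal{C}}$. I would first verify the universal property against a $\tuple{\kappa,\mbfUplus}$-compact test object $X$: by the definition of an accessible extension, $X$ is isomorphic to $i X_0$ for some $\tuple{\kappa,\mbfU}$-compact $X_0$ in $\mathcal{C}$, and then (ii) identifies cones $\Delta(i X_0) \hoto i B$ with cones $\Delta X_0 \hoto B$, which by the definition of $L$ correspond to morphisms $X_0 \to L$, and hence again by (ii) to morphisms $i X_0 \to i L$; a short diagram chase shows the composite of these bijections is post-composition with $i\lambda$. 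For an arbitrary test object $X$, present it as a $\mbfUplus$-small $\kappa$-filtered colimit $X = \indlim_{\mathbb{K}} X_{\bullet}$ of $\tuple{\kappa,\mbfUplus}$-compact objects: a cone from $X$ to $i B$ restricts along the colimit cocone to a cone from each $X_k$, which factors uniquely through $i\lambda$ by the compact case; uniqueness then forces these factorizations to assemble into a cocone $X_{\bullet} \hoto \Delta(i L)$, and the morphism $X \to i L$ it induces is the required unique factorization. Hence $i L$ together with $i\lambda$ is a limit of $i B$ in $\succof{\mathcal{C}}$.

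The main obstacle is keeping (iii) honest: the index category $\mathcal{J}$ is allowed to be large, so $\prolim_{\mathcal{J}} i B$ cannot be manufactured as a limit of $\mbfUplus$-sets, and the argument must run entirely through universal properties --- hence the reduction to $\tuple{\kappa,\mbfUplus}$-compact test objects, where (ii) supplies the needed hom-bijections, followed by gluing along a $\kappa$-filtered presentation. The same phenomenon is already present in (ii): it is precisely the two-step reduction --- first to a compact domain, only then to an arbitrary one --- that removes any need to assume $B$ small.
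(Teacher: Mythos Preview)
Your argument is correct and follows essentially the same strategy as the paper: (i) via the equivalence $\Kompakt[\kappa][\mbfU]{\mathcal{C}} \simeq \Kompakt[\kappa][\mbfUplus]{\succof{\mathcal{C}}}$ and the characterisation of local presentability through $\kappa$-cocompleteness of the compact objects; (ii) via filtered-colimit presentations of both source and target; (iii) by reducing the universal property to compact test objects. The only packaging differences are that the paper compresses your two-step dévissage in (ii) into a single chain $\prolim_{\mathbb{I}} \indlim_{\mathbb{J}} \Hom$ of bijections (invoking \autoref{thm:foundations:stability.of.limits.and.colimits.of.sets} to identify these limits/colimits across universes), and in (iii) it replaces your explicit gluing over a $\kappa$-filtered presentation of the test object by the one-line observation that the restricted Yoneda embedding $\succof{\mathcal{C}} \to \Func{\op{\Kompakt[\kappa][\mbfU]{\mathcal{C}}}}{\cat{\Setplus}}$ is fully faithful, which already encodes that compact test objects suffice.
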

\begin{proof}
(i). If $\mathcal{C}$ is a locally $\kappa$-presentable $\mbfU$-category, then $\Kompakt[\kappa][\mbfU]{\mathcal{C}}$ has colimits for all $\kappa$-small diagrams, so $\Kompakt[\kappa][\mbfUplus]{\succof{\mathcal{C}}}$ also has colimits for all $\kappa$-small diagrams. The classification theorem (\ref{thm:classification.of.accessible.categories}) then implies $\succof{\mathcal{C}}$ is a locally $\kappa$-presentable $\mbfUplus$-category. Reversing this argument proves the converse.

\bigskip\noindent
(ii). Let $A : \mathbb{I} \to \mathcal{C}$ and $B : \mathbb{J} \to \mathcal{C}$ be two $\mbfU$-small $\kappa$-filtered diagrams of $\tuple{\kappa, \mbfU}$-\allowhyphens compact objects in $\mathcal{C}$. Then,
\begin{multline*}
\Hom[\mathcal{C}]{\indlim_{\mathbb{I}} A}{\indlim_{\mathbb{J}} B}
  \cong \prolim_{\mathbb{I}} \indlim_{\mathbb{J}} \Hom[\mathcal{C}]{A}{B}
  \cong \prolim_{\mathbb{I}} \indlim_{\mathbb{J}} \Hom[\succof{\mathcal{C}}]{i A}{i B} \\
  \cong \Hom[\succof{\mathcal{C}}]{\indlim_{\mathbb{I}} i A}{\indlim_{\mathbb{J}} i B}
  \cong \Hom[\succof{\mathcal{C}}]{i \indlim_{\mathbb{I}} A}{i \indlim_{\mathbb{J}} B}
\end{multline*}
because $i$ is $\tuple{\kappa, \mbfU}$-accessible and is fully faithful on the subcategory $\Kompakt[\kappa][\mbfU]{\mathcal{C}}$, and therefore $i : \mathcal{C} \to \succof{\mathcal{C}}$ itself is fully faithful. Note that this hinges crucially on \autoref{thm:foundations:stability.of.limits.and.colimits.of.sets}.

\bigskip\noindent
(iii). Let $B : \mathcal{J} \to \mathcal{C}$ be any diagram. We observe that, for any $\tuple{\kappa, \mbfU}$-compact object $C$ in $\mathcal{C}$,
\begin{align*}
\Hom[\succof{\mathcal{C}}]{i C}{i \prolim_{\mathcal{J}} B} 
& \cong \Hom[\mathcal{C}]{C}{\prolim_{\mathcal{J}} B}
&& \text{because $i$ is fully faithful} \\
& \cong \prolim_{\mathcal{J}} \Hom[\mathcal{C}]{C}{B}
&& \text{by definition of limit} \\
& \cong \prolim_{\mathcal{J}} \Hom[\succof{\mathcal{C}}]{i C}{i B}
&& \text{because $i$ is fully faithful}
\end{align*}
but we know the restricted Yoneda embedding $\succof{\mathcal{C}} \to \Func{\op{\Kompakt[\kappa][\mbfU]{\mathcal{C}}}}{\cat{\Setplus}}$ is fully faithful, so this is enough to conclude that $i \prolim_{\mathcal{J}} B$ is the limit of $i B$ in $\succof{\mathcal{C}}$.
\end{proof}

\begin{remark}
Similar methods show that any fully faithful functor $\mathcal{C} \to \succof{\mathcal{C}}$ satisfying the four bulleted conditions in the definition above is necessarily $\tuple{\kappa, \mbfU}$-accessible.
\end{remark}

\begin{lem}
\label{lem:special.left.Beck-Chevalley.condition.for.accessible.extensions}
\needspace{3\baselineskip}
Let $\mbfU$ and $\mbfUplus$ be universes, with $\mbfU \in \mbfUplus$, and let $\kappa$ be a regular cardinal in $\mbfU$. Suppose:
\begin{itemize}
\item $\mathcal{C}$ and $\mathcal{D}$ are locally $\kappa$-presentable $\mbfU$-categories.

\item $\succof{\mathcal{C}}$ and $\succof{\mathcal{D}}$ are locally $\kappa$-presentable $\mbfUplus$-categories.

\item $i : \mathcal{C} \to \succof{\mathcal{C}}$ and $j : \mathcal{D} \to \succof{\mathcal{D}}$ are $\tuple{\kappa, \mbfU, \mbfUplus}$-accessible extensions.
\end{itemize}
Given a strictly commutative diagram of the form below,
\[
\begin{tikzcd}
\mathcal{D} \dar[swap]{G} \rar{j} &
\succof{\mathcal{D}} \dar{G^+} \\
\mathcal{C} \rar[swap]{i} &
\succof{\mathcal{C}}
\end{tikzcd}
\]
where $G$ is $\tuple{\kappa, \mbfU}$-accessible, $G^+$ is $\tuple{\kappa, \mbfUplus}$-accessible, if both have left adjoints, then the diagram satisfies the left Beck--Chevalley condition.
\end{lem}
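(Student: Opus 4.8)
The plan is to verify the left Beck--Chevalley condition componentwise, using \autoref{lem:local.Beck-Chevalley.condition}. Write $F \dashv G$ and $F^+ \dashv G^+$, with units $\eta$ and $\eta^+$. Strict commutativity of the square gives $i G = G^+ j$, so the comparison $2$-cell is the identity, and the mate in question is the natural transformation $\beta \colon F^+ i \hoto j F$ whose component $\beta_C$ is the unique morphism with $G^+ \beta_C \circ \eta^+_{i C} = i \eta_C$. As the proof of \autoref{lem:local.Beck-Chevalley.condition} shows, $\beta_C$ is invertible exactly when the functor
\[
\Phi_C \colon \commacat{C}{G} \to \commacat{i C}{G^+}, \qquad \tuple{D, f} \mapsto \tuple{j D, i f}
\]
carries the initial object $\tuple{F C, \eta_C}$ of $\commacat{C}{G}$ to an initial object of $\commacat{i C}{G^+}$.

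First I would reduce to the case where $C$ is $\tuple{\kappa, \mbfU}$-compact. Both $F^+ i$ and $j F$ preserve colimits for $\mbfU$-small $\kappa$-filtered diagrams, since $F$ and $F^+$ are cocontinuous and $i$ and $j$ are $\tuple{\kappa, \mbfU}$-accessible; and $\mathcal{C}$, being a locally $\kappa$-presentable $\mbfU$-category, is generated under $\mbfU$-small $\kappa$-filtered colimits by its $\tuple{\kappa, \mbfU}$-compact objects. Hence, along any presentation of $C$ as a $\mbfU$-small $\kappa$-filtered colimit of $\tuple{\kappa, \mbfU}$-compact objects, the morphism $\beta_C$ is identified with the colimit of its components at the $\tuple{\kappa, \mbfU}$-compact pieces; a componentwise-invertible natural transformation inducing an isomorphism on colimits, it suffices to prove $\beta_{C'}$ invertible for all $\tuple{\kappa, \mbfU}$-compact $C'$.

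So fix a $\tuple{\kappa, \mbfU}$-compact object $C$. Since $G$ and $G^+$ are right adjoints, they preserve all $\mbfU$-small, resp.\ $\mbfUplus$-small, limits, so the lemma preceding \autoref{thm:accessible.adjoints} applies on both sides (with $\lambda = \kappa$): the initial object of $\commacat{C}{G}$ is $\lim \bigl( \mathcal{F} \embedinto \commacat{C}{G} \bigr)$, where $\mathcal{F}$ is the full subcategory on those $\tuple{D, f}$ with $D$ a $\tuple{\kappa, \mbfU}$-compact object of $\mathcal{D}$, and — as $i C$ is $\tuple{\kappa, \mbfUplus}$-compact in $\succof{\mathcal{C}}$ — the initial object of $\commacat{i C}{G^+}$ is $\lim \bigl( \mathcal{F}^+ \embedinto \commacat{i C}{G^+} \bigr)$, where $\mathcal{F}^+$ is the full subcategory on those $\tuple{\succof{D}, \succof{f}}$ with $\succof{D}$ a $\tuple{\kappa, \mbfUplus}$-compact object of $\succof{\mathcal{D}}$; by \autoref{prop:compact.objects.in.accessible.categories} both $\mathcal{F}$ and $\mathcal{F}^+$ are essentially $\mbfU$-small. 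The crucial observation is that $\Phi_C$ restricts to an equivalence $\mathcal{F} \to \mathcal{F}^+$: it lands in $\mathcal{F}^+$ because $j$ sends $\tuple{\kappa, \mbfU}$-compact objects to $\tuple{\kappa, \mbfUplus}$-compact ones; it is essentially surjective because every $\tuple{\kappa, \mbfUplus}$-compact object of $\succof{\mathcal{D}}$ is isomorphic to some $j D$ with $D$ $\tuple{\kappa, \mbfU}$-compact, and then — $i$ being fully faithful (\autoref{prop:accessible.extension.properties}) — every morphism $i C \to G^+ j D = i G D$ has the form $i f$ for a unique $f \colon C \to G D$; and it is fully faithful because $j$ is fully faithful on $\tuple{\kappa, \mbfU}$-compact objects while $i$ is faithful.

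Finally, $\Phi_C$ carries the inclusion $\mathcal{F} \embedinto \commacat{C}{G}$ to the inclusion $\mathcal{F}^+ \embedinto \commacat{i C}{G^+}$ precomposed with the equivalence just found, so once we know $\Phi_C$ preserves the limit $\lim \bigl( \mathcal{F} \embedinto \commacat{C}{G} \bigr)$ we may conclude that $\Phi_C \tuple{F C, \eta_C} = \tuple{j F C, i \eta_C}$ is the initial object of $\commacat{i C}{G^+}$, as wanted. To establish this preservation, pass to the projections $\commacat{C}{G} \to \mathcal{D}$ and $\commacat{i C}{G^+} \to \succof{\mathcal{D}}$: each of these builds the relevant limit from the corresponding limit downstairs (because $G$, resp.\ $G^+$, preserves it) and intertwines $\Phi_C$ with $j$, so the matter reduces to whether $j$ preserves the limit in $\mathcal{D}$ of the composite $\mathcal{F} \embedinto \commacat{C}{G} \to \mathcal{D}$ — which is immediate from \autoref{prop:accessible.extension.properties}(iii) — while the comma-category structure maps match because, by strict commutativity and faithfulness of $i$, every structure map on either side is of the form $i(\blank)$. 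I expect this last step to be the main obstacle: the routine-looking but slightly delicate bookkeeping that the equivalence $\mathcal{F} \simeq \mathcal{F}^+$ is compatible with the comma-category inclusions and with the recipe for computing the comma-category limits from limits in $\mathcal{D}$ and $\succof{\mathcal{D}}$ is where the real content sits, and it is exactly there that the limit-preservation property of accessible extensions is brought to bear.
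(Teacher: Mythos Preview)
Your proof is correct and follows essentially the same strategy as the paper's: reduce to the case of a $\tuple{\kappa, \mbfU}$-compact object $C$, then show that the induced functor $\commacat{C}{G} \to \commacat{i C}{G^+}$ preserves initial objects by exploiting the construction of those initial objects as limits over the weakly initial families $\mathcal{F}$ and $\mathcal{F}^+$ of compact objects, together with the limit-preservation property of accessible extensions (\autoref{prop:accessible.extension.properties}). The paper compresses all of this into the phrase ``by inspecting the proof of \autoref{thm:accessible.adjoints}'', whereas you have carried out that inspection explicitly; your careful verification that $\Phi_C$ restricts to an equivalence $\mathcal{F} \simeq \mathcal{F}^+$ and that the comma-category limits are compatible with this equivalence is exactly the content the paper leaves implicit.
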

\begin{proof}
Let $C$ be a $\tuple{\kappa, \mbfU}$-compact object in $\mathcal{C}$. \Autoref{prop:accessible.extension.properties} says that $i : \mathcal{C} \to \succof{\mathcal{C}}$ and $j : \mathcal{D} \to \succof{\mathcal{D}}$ preserve limits, so  by inspecting the proof of \autoref{thm:accessible.adjoints}, we see that the functor $\commacat{C}{G} \to \commacat{i C}{\succof{G}}$ induced by $j$ preserves initial objects. As in the proof of \autoref{lem:local.Beck-Chevalley.condition}, this implies the component at $C$ of the left Beck--Chevalley natural transformation $F^+ i \hoto j F$ is an isomorphism; but $\mathcal{C}$ is generated by $\Kompakt[\kappa][\mbfU]{\mathcal{C}}$ and the functors $F, F^+, i, j$ all preserve colimits for $\mbfU$-small $\kappa$-filtered diagrams, so in fact $F^+ i \hoto j F$ is a natural isomorphism.
\end{proof}

\begin{prop}
\label{prop:accessible.extensions.preserve.colimits}
If $i : \mathcal{C} \to \succof{\mathcal{C}}$ is a $\tuple{\kappa, \mbfU, \mbfUplus}$-accessible extension and $\mathcal{C}$ is a locally $\kappa$-presentable $\mbfU$-category, then $i$ preserves colimits for all $\mbfU$-small diagrams in $\mathcal{C}$.
\end{prop}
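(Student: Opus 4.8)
The plan is to reduce to the case of $\kappa$-small colimits and then exploit the locally presentable structure of functor categories. Since $\mathcal{C}$ is a locally $\kappa$-presentable $\mbfU$-category, \autoref{prop:accessible.extension.properties} shows that $\succof{\mathcal{C}}$ is a locally $\kappa$-presentable $\mbfUplus$-category; in particular both $\mathcal{C}$ and $\succof{\mathcal{C}}$ are $\mbfU$-cocomplete. Now $i$ is $\tuple{\kappa, \mbfU}$-accessible, so it already preserves colimits for all $\mbfU$-small $\kappa$-filtered diagrams. On the other hand, an arbitrary colimit of a $\mbfU$-small diagram is the coequaliser of a parallel pair of morphisms between two $\mbfU$-small coproducts, and — because $\kappa$ is regular, so that the poset of $\kappa$-small subsets of a given $\mbfU$-set is a $\mbfU$-small $\kappa$-filtered poset over which the corresponding partial coproducts have the full coproduct as colimit — each of those $\mbfU$-small coproducts is a $\mbfU$-small $\kappa$-filtered colimit of $\kappa$-small coproducts. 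Hence any functor between $\mbfU$-cocomplete categories that preserves $\kappa$-small colimits and $\mbfU$-small $\kappa$-filtered colimits preserves all $\mbfU$-small colimits, and so it is enough to show that $i$ preserves $\kappa$-small colimits.

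Next I would fix a $\kappa$-small category $\mathbb{J}$ and a diagram $D : \mathbb{J} \to \mathcal{C}$. By \autoref{prop:locally.presentable.functor.categories}, the functor category $\Func{\mathbb{J}}{\mathcal{C}}$ is again a locally $\kappa$-presentable $\mbfU$-category, hence $\kappa$-accessible; and since $\mathbb{J}$ is $\kappa$-small, \autoref{prop:compact.objects.in.diagram.categories} (applied with $\mu = \lambda = \kappa$) identifies its $\tuple{\kappa, \mbfU}$-compact objects with the diagrams $\mathbb{J} \to \mathcal{C}$ that take values in $\Kompakt[\kappa][\mbfU]{\mathcal{C}}$. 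Therefore $D$ is canonically the colimit in $\Func{\mathbb{J}}{\mathcal{C}}$ of some $\mbfU$-small $\kappa$-filtered diagram $p \mapsto D_p$ in which every $D_p$ takes values in $\Kompakt[\kappa][\mbfU]{\mathcal{C}}$. By \autoref{lem:small.objects}, $\Kompakt[\kappa][\mbfU]{\mathcal{C}}$ is closed in $\mathcal{C}$ under $\kappa$-small colimits, and likewise $\Kompakt[\kappa][\mbfUplus]{\succof{\mathcal{C}}}$ is closed in $\succof{\mathcal{C}}$ under $\kappa$-small colimits; moreover, by the definition of a $\tuple{\kappa, \mbfU, \mbfUplus}$-accessible extension, $i$ restricts to an equivalence $\Kompakt[\kappa][\mbfU]{\mathcal{C}} \to \Kompakt[\kappa][\mbfUplus]{\succof{\mathcal{C}}}$. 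Hence each $\indlim_{\mathbb{J}} D_p$ is a $\tuple{\kappa, \mbfU}$-compact object of $\mathcal{C}$, and $i$ sends it to $\indlim_{\mathbb{J}}(i D_p)$. Interchanging the two colimits, and using that colimits in functor categories are computed pointwise while $i$ preserves $\mbfU$-small $\kappa$-filtered colimits, I obtain
\begin{align*}
i\bigl(\indlim_{\mathbb{J}} D\bigr)
&\cong i\bigl(\indlim_{p} \indlim_{\mathbb{J}} D_p\bigr)
\cong \indlim_{p}\, i\bigl(\indlim_{\mathbb{J}} D_p\bigr) \\
&\cong \indlim_{p} \indlim_{\mathbb{J}} (i D_p)
\cong \indlim_{\mathbb{J}} \indlim_{p} (i D_p)
\cong \indlim_{\mathbb{J}} (i D),
\end{align*}
and chasing the evident cocones shows this isomorphism is the canonical comparison map, so $i$ preserves $\indlim_{\mathbb{J}} D$.

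The only step that is not a formal manipulation is the reduction in the first paragraph — that preservation of $\kappa$-small colimits together with preservation of $\mbfU$-small $\kappa$-filtered colimits forces preservation of all $\mbfU$-small colimits — and I expect that to be the main point to get right. It is also precisely why the reduction cannot be avoided: \autoref{prop:compact.objects.in.diagram.categories} describes the compact objects of $\Func{\mathbb{J}}{\mathcal{C}}$ only when $\mathbb{J}$ is $\kappa$-small, and the description genuinely fails for larger indexing categories (as the remark following that proposition records), so the argument of the second paragraph cannot be run directly on an arbitrary $\mbfU$-small $\mathbb{J}$. Everything else — the colimit interchange, the pointwise computation in functor categories, and the bookkeeping with compact objects — is formal once $i$ has been pinned down on $\kappa$-small colimits.
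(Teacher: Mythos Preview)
Your argument is correct, and it shares the paper's first reduction step verbatim: both observe that, since $i$ is already $\tuple{\kappa,\mbfU}$-accessible and $\mbfU$-small coproducts decompose as $\mbfU$-small $\kappa$-filtered colimits of $\kappa$-small coproducts, it suffices to treat $\kappa$-small indexing categories. From that point the two proofs diverge.

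The paper packages the $\kappa$-small case as a left Beck--Chevalley problem: for a $\kappa$-small $\mathbb{D}$ it considers the square with $\Delta : \mathcal{C} \to \Func{\mathbb{D}}{\mathcal{C}}$, $\Delta^+ : \succof{\mathcal{C}} \to \Func{\mathbb{D}}{\succof{\mathcal{C}}}$, and $i_* : \Func{\mathbb{D}}{\mathcal{C}} \to \Func{\mathbb{D}}{\succof{\mathcal{C}}}$, uses \autoref{prop:compact.objects.in.diagram.categories} to verify that $i_*$ is itself a $\tuple{\kappa,\mbfU,\mbfUplus}$-accessible extension, and then invokes \autoref{lem:special.left.Beck-Chevalley.condition.for.accessible.extensions} (which in turn leans on the proof of \autoref{thm:accessible.adjoints}) to conclude that the mate $\indlim_{\mathbb{D}} \circ\, i_* \hoto i \circ \indlim_{\mathbb{D}}$ is invertible. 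You instead use \autoref{prop:compact.objects.in.diagram.categories} to write an arbitrary $D : \mathbb{J} \to \mathcal{C}$ as a $\mbfU$-small $\kappa$-filtered colimit of componentwise-compact diagrams $D_p$, observe that the restriction of $i$ to $\Kompakt[\kappa][\mbfU]{\mathcal{C}} \simeq \Kompakt[\kappa][\mbfUplus]{\succof{\mathcal{C}}}$ must carry the $\kappa$-small colimit $\indlim_{\mathbb{J}} D_p$ (computed in either ambient category, by \autoref{lem:small.objects} and fullness) to $\indlim_{\mathbb{J}} (i D_p)$, and finish by interchanging colimits. Your route is more self-contained --- it avoids the Beck--Chevalley formalism and the appeal to the internals of the accessible adjoint functor theorem --- whereas the paper's route is chosen to slot into the machinery that drives the subsequent stability theorems. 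Both arguments make essentially the same use of \autoref{prop:compact.objects.in.diagram.categories}, and your remark about why the reduction to $\kappa$-small $\mathbb{J}$ is genuinely necessary is on point.
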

\begin{proof}
It is well-known that a functor preserves colimits for all $\mbfU$-small diagrams if and only if it preserves coequalisers for all parallel pairs and coproducts for all $\mbfU$-small families; but coproducts for $\mbfU$-small families can be constructed in a uniform way using coproducts for $\kappa$-small families and colimits for $\mbfU$-small $\kappa$-filtered diagrams, so it is enough to show that $i : \mathcal{C} \to \succof{\mathcal{C}}$ preserves all colimits for $\kappa$-small diagrams, since $i$ is already $\tuple{\kappa, \mbfU}$-accessible.

Let $\mathbb{D}$ be a $\kappa$-small category. Recalling \autoref{prop:limits.and.colimits}, our problem amounts to showing that the diagram
\[
\begin{tikzcd}
\mathcal{C} \dar[swap]{\Delta} \rar{i} &
\succof{\mathcal{C}} \dar{\Delta^+} \\
\Func{\mathbb{D}}{\mathcal{C}} \rar[swap]{i_*} &
\Func{\mathbb{D}}{\succof{\mathcal{C}}}
\end{tikzcd}
\]
satisfies the left Beck--Chevalley condition. It is clear that $i_*$ is fully faithful. Colimits for $\mbfU$-small diagrams in $\Func{\mathbb{D}}{\mathcal{C}}$ and in $\Func{\mathbb{D}}{\succof{\mathcal{C}}}$ are computed componentwise, so $\Delta$ and $i_*$ are certainly $\tuple{\kappa, \mbfU}$-accessible, and $\Delta^+$ is $\tuple{\kappa, \mbfUplus}$-accessible. Using \autoref{prop:compact.objects.in.diagram.categories}, we see that $i_*$ is also a $\tuple{\kappa, \mbfU, \mbfUplus}$-accessible extension, so we apply the lemma above to conclude that the left Beck--Chevalley condition is satisfied.
\end{proof}

\begin{thm}[Stability of accessible adjoint functors]
Let $\mbfU$ and $\mbfUplus$ be universes, with $\mbfU \in \mbfUplus$, and let $\kappa$ and $\lambda$ be regular cardinals in $\mbfU$, with $\kappa \le \lambda$. Suppose:
\begin{itemize}
\item $\mathcal{C}$ is a locally $\kappa$-presentable $\mbfU$-category.

\item $\mathcal{D}$ is a locally $\lambda$-presentable $\mbfU$-category.

\item $\succof{\mathcal{C}}$ is a locally $\kappa$-presentable $\mbfUplus$-category.

\item $\succof{\mathcal{D}}$ is a locally $\lambda$-presentable $\mbfUplus$-category.
\end{itemize}
Let $i : \mathcal{C} \to \succof{\mathcal{C}}$ be a $\tuple{\kappa, \mbfU, \mbfUplus}$-accessible extension and let $j : \mathcal{D} \to \succof{\mathcal{D}}$ be a fully faithful functor.
\begin{enumerate}[(i)]
\item Given a strictly commutative diagram of the form below,
\[
\begin{tikzcd}
\mathcal{D} \dar[swap]{G} \rar{j} &
\succof{\mathcal{D}} \dar{G^+} \\
\mathcal{C} \rar[swap]{i} &
\succof{\mathcal{C}}
\end{tikzcd}
\]
where $G$ is $\tuple{\lambda, \mbfU}$-accessible and $G^+$ is $\tuple{\lambda, \mbfUplus}$-accessible, if both have left adjoints and $j$ is a $\tuple{\lambda, \mbfU, \mbfUplus}$-\allowhyphens accessible extension, then the diagram satisfies the left Beck--Chevalley condition.

\item Given a strictly commutative diagram of the form below,
\[
\begin{tikzcd}
\mathcal{C} \dar[swap]{F} \rar{i} &
\succof{\mathcal{C}} \dar{F^+} \\
\mathcal{D} \rar[swap]{j} &
\succof{\mathcal{D}}
\end{tikzcd}
\]
if both $F$ and $F^+$ have right adjoints, then the diagram satisfies the right Beck--Chevalley condition.
\end{enumerate}
\end{thm}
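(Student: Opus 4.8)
The plan is to handle the two parts by different routes: part~(i) should follow by bootstrapping \autoref{lem:special.left.Beck-Chevalley.condition.for.accessible.extensions} from index $\kappa$ up to index $\lambda$, whereas part~(ii) is best attacked by a direct computation with the mate, exploiting that $\succof{\mathcal{C}}$ is generated by its $\tuple{\kappa, \mbfUplus}$-compact objects.

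For part~(i): by \autoref{lem:accessibility.index.of.locally.presentable.categories}, $\mathcal{C}$ is also a locally $\lambda$-presentable $\mbfU$-category, $\succof{\mathcal{C}}$ is also a locally $\lambda$-presentable $\mbfUplus$-category, and $i$ is $\tuple{\lambda, \mbfU}$-accessible; the remaining data $\mathcal{D}$, $\succof{\mathcal{D}}$, $G$, $G^+$, $j$ already carry the correct $\lambda$-indices by hypothesis. So, once I check that $i$ is not merely a $\tuple{\kappa, \mbfU, \mbfUplus}$-accessible extension but actually a $\tuple{\lambda, \mbfU, \mbfUplus}$-accessible extension, \autoref{lem:special.left.Beck-Chevalley.condition.for.accessible.extensions} applied with $\lambda$ in place of $\kappa$ delivers the left Beck--Chevalley condition. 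To verify this upgrade I would use that $i$ preserves colimits for all $\mbfU$-small diagrams (\autoref{prop:accessible.extensions.preserve.colimits}), in particular for $\lambda$-small ones; since $i$ sends $\tuple{\kappa, \mbfU}$-compact objects to $\tuple{\kappa, \mbfUplus}$-compact and hence to $\tuple{\lambda, \mbfUplus}$-compact objects (\autoref{lem:compactness.rank}), and $\tuple{\lambda, \mbfUplus}$-compact objects are closed under $\lambda$-small colimits (\autoref{lem:small.objects}), the description in \autoref{prop:compact.objects.in.locally.presentable.categories} of $\Kompakt[\lambda][\mbfU]{\mathcal{C}}$ (and likewise $\Kompakt[\lambda][\mbfUplus]{\succof{\mathcal{C}}}$) as the smallest replete full subcategory containing $\Kompakt[\kappa][\mbfU]{\mathcal{C}}$ (likewise $\Kompakt[\kappa][\mbfUplus]{\succof{\mathcal{C}}}$) and closed under $\lambda$-small colimits forces $i$ to send $\tuple{\lambda, \mbfU}$-compact objects to $\tuple{\lambda, \mbfUplus}$-compact objects and the induced functor $\Kompakt[\lambda][\mbfU]{\mathcal{C}} \to \Kompakt[\lambda][\mbfUplus]{\succof{\mathcal{C}}}$ to be essentially surjective on objects; full faithfulness of that functor is inherited from the full faithfulness of $i$ itself (\autoref{prop:accessible.extension.properties}).

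For part~(ii): write $F \dashv G$ and $F^+ \dashv G^+$ for the two adjunctions and let $\alpha : i G \hoto G^+ j$ be the mate of the identity $2$-cell $F^+ i = j F$; I must show each component $\alpha_D$ is invertible. Fixing $D$ in $\mathcal{D}$ and a $\tuple{\kappa, \mbfU}$-compact object $C$ in $\mathcal{C}$, I would chain the full faithfulness of $i$, the adjunction $F \dashv G$, the full faithfulness of $j$, the strict equality $j F = F^+ i$, and the adjunction $F^+ \dashv G^+$ into a bijection
\[
\Hom[\mathcal{C}]{C}{G D} \cong \Hom[\succof{\mathcal{C}}]{i C}{i G D} \cong \Hom[\mathcal{D}]{F C}{D} \cong \Hom[\succof{\mathcal{D}}]{F^+ i C}{j D} \cong \Hom[\succof{\mathcal{C}}]{i C}{G^+ j D},
\]
and then a triangle-identity chase should show this composite is $v \mapsto \alpha_D \circ i v$, so that post-composition with $\alpha_D$ is a bijection on morphisms out of $i C$, naturally in $C$. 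Because $i$ is a $\tuple{\kappa, \mbfU, \mbfUplus}$-accessible extension, every $\tuple{\kappa, \mbfUplus}$-compact object of $\succof{\mathcal{C}}$ is isomorphic to some such $i C$, so $\Hom[\succof{\mathcal{C}}]{P}{\alpha_D}$ is a bijection for every $\tuple{\kappa, \mbfUplus}$-compact $P$; and since $\succof{\mathcal{C}}$ is a locally $\kappa$-presentable $\mbfUplus$-category, the restricted Yoneda embedding $\succof{\mathcal{C}} \to \Func{\op{\Kompakt[\kappa][\mbfUplus]{\succof{\mathcal{C}}}}}{\cat{\Setplus}}$ is fully faithful (\autoref{thm:classification.of.locally.presentable.categories}), hence conservative, whence $\alpha_D$ is an isomorphism. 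As $D$ is arbitrary and the mate is automatically natural, this yields the right Beck--Chevalley condition.

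The main obstacle I anticipate is the bookkeeping in part~(i): carefully verifying that $i$ upgrades to a $\tuple{\lambda, \mbfU, \mbfUplus}$-accessible extension, and in particular the essential surjectivity of $\Kompakt[\lambda][\mbfU]{\mathcal{C}} \to \Kompakt[\lambda][\mbfUplus]{\succof{\mathcal{C}}}$, which is exactly where colimit-preservation of $i$ must be combined with the generation statement of \autoref{prop:compact.objects.in.locally.presentable.categories}; in part~(ii) the only subtle point is the triangle-identity chase confirming that the chained bijection is genuinely post-composition with $\alpha_D$ rather than with some other comparison map.
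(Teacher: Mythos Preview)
Your proposal is correct, but both parts take a different route from the paper.

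For part~(i), the paper does \emph{not} upgrade $i$ to a $\tuple{\lambda, \mbfU, \mbfUplus}$-accessible extension. Instead it re-runs the argument of \autoref{lem:special.left.Beck-Chevalley.condition.for.accessible.extensions} at the original index $\kappa$: one checks the Beck--Chevalley component is invertible at each $\tuple{\kappa, \mbfU}$-compact $C$ (using that $i$ and $j$ preserve limits, \autoref{prop:accessible.extension.properties}), and then extends over all of $\mathcal{C}$ by $\kappa$-filtered colimits. The only wrinkle is that $j$ is a priori merely $\tuple{\lambda, \mbfU}$-accessible, so to know it preserves $\mbfU$-small $\kappa$-filtered colimits the paper invokes \autoref{prop:accessible.extensions.preserve.colimits} on $j$. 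Your route instead invokes \autoref{prop:accessible.extensions.preserve.colimits} on $i$ and uses it, together with \autoref{prop:compact.objects.in.locally.presentable.categories}, to bootstrap $i$ up to level $\lambda$; this is exactly the content of \autoref{thm:accessible.extension.properties}\,(i), which in the paper comes \emph{after} the present theorem. Both arguments are sound and of comparable length; yours has the virtue that it literally reduces to \autoref{lem:special.left.Beck-Chevalley.condition.for.accessible.extensions} without re-inspecting its proof, at the cost of proving the raising-of-index statement early.

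For part~(ii), the paper again appeals to the comma-category picture: it observes (by inspecting the construction of terminal objects in $\commacat{F}{D}$ in the lemma preceding \autoref{thm:accessible.adjoints}) that the functor $\commacat{F}{D} \to \commacat{F^+}{j D}$ induced by $i$ preserves terminal objects, using that $i$ preserves all $\mbfU$-small colimits, and then invokes \autoref{lem:local.Beck-Chevalley.condition}. Your direct mate computation via the restricted Yoneda embedding is more self-contained: it avoids having to revisit how terminal objects of $\commacat{F}{D}$ are actually built, needing only full faithfulness of $i$ and $j$, the strict equality $jF = F^+ i$, and conservativity of the restricted Yoneda embedding on $\succof{\mathcal{C}}$. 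The triangle-identity chase you flag is indeed routine (naturality of $\eta^+$ gives $G^+ j \epsilon_D \circ \eta^+_{i G D} \circ i v$, which is $\alpha_D \circ i v$).
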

\begin{proof}
(i). The proof is essentially the same as \autoref{lem:special.left.Beck-Chevalley.condition.for.accessible.extensions}, though we have to use \autoref{prop:accessible.extensions.preserve.colimits} to ensure that $j$ preserves colimits for all $\mbfU$-small $\kappa$-filtered diagrams in $\mathcal{C}$.

\bigskip\noindent
(ii). Let $D$ be any object in $\mathcal{D}$. Inspecting the proof of \autoref{thm:accessible.adjoints}, we see that our hypotheses, plus the fact that $i$ preserves colimits for all $\mbfU$-small diagrams in $\mathcal{C}$, imply that the functor $\commacat{F}{D} \to \commacat{\succof{F}}{j D}$ induced by $i$ preserves terminal objects. Thus \autoref{lem:local.Beck-Chevalley.condition} implies that the diagram satisfies the right Beck--Chevalley condition.
\end{proof}

\begin{thm}
\label{thm:accessible.extension.properties}
Let $i : \mathcal{C} \to \succof{\mathcal{C}}$ be a $\tuple{\kappa, \mbfU, \mbfUplus}$-accessible extension and let $\mathcal{C}$ be a locally $\kappa$-presentable $\mbfU$-category.
\begin{enumerate}[(i)]
\item If $\lambda$ is a regular cardinal in $\mbfU$ and $\kappa \le \lambda$, then $i : \mathcal{C} \to \succof{\mathcal{C}}$ is also a $\tuple{\lambda, \mbfU, \mbfUplus}$-accessible extension.

\item If $\mu$ is the cardinality of $\mbfU$, then $i : \mathcal{C} \to \succof{\mathcal{C}}$ factors through the inclusion $\Kompakt[\mu][\mbfUplus]{\succof{\mathcal{C}}} \embedinto \succof{\mathcal{C}}$ as functor $\mathcal{C} \to \Kompakt[\mu][\mbfUplus]{\succof{\mathcal{C}}}$ that is (fully faithful and) essentially surjective on objects. 

\item The $\tuple{\mu, \mbfUplus}$-accessible functor $\Ind[\mu][\mbfUplus]{\mathcal{C}} \to \succof{\mathcal{C}}$ induced by $i : \mathcal{C} \to \succof{\mathcal{C}}$ is fully faithful and essentially surjective on objects.
\end{enumerate}
\end{thm}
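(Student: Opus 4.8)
The plan is to prove (i) first --- it carries essentially all of the content --- and then deduce (ii) by a size computation and (iii) from (ii) together with the classification of accessible categories. Throughout I would use that $\succof{\mathcal{C}}$ is a locally $\kappa$-presentable $\mbfUplus$-category (\autoref{prop:accessible.extension.properties}(i)), that $i$ is fully faithful (\autoref{prop:accessible.extension.properties}(ii)), and that $i$ preserves colimits for all $\mbfU$-small diagrams (\autoref{prop:accessible.extensions.preserve.colimits}).

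For (i), the clauses that $\mathcal{C}$ is a $\lambda$-accessible $\mbfU$-category, that $\succof{\mathcal{C}}$ is a $\lambda$-accessible $\mbfUplus$-category, and that $i$ is $\tuple{\lambda, \mbfU}$-accessible are immediate from \autoref{lem:accessibility.index.of.locally.presentable.categories}(i) and~(ii). The heart of the matter is that $i$ restricts to an equivalence $\Kompakt[\lambda][\mbfU]{\mathcal{C}} \to \Kompakt[\lambda][\mbfUplus]{\succof{\mathcal{C}}}$; full faithfulness is inherited from $i$, and for the two inclusions (which together with full faithfulness supply the remaining clauses of the definition) I would invoke \autoref{prop:compact.objects.in.locally.presentable.categories}, which presents $\Kompakt[\lambda][\mbfU]{\mathcal{C}}$ (\resp $\Kompakt[\lambda][\mbfUplus]{\succof{\mathcal{C}}}$) as the smallest replete full subcategory of $\mathcal{C}$ (\resp $\succof{\mathcal{C}}$) that contains $\Kompakt[\kappa][\mbfU]{\mathcal{C}}$ (\resp $\Kompakt[\kappa][\mbfUplus]{\succof{\mathcal{C}}}$) and is closed under colimits for $\lambda$-small diagrams. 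Since $\lambda \in \mbfU$, such diagrams are $\mbfU$-small, so $i$ preserves their colimits. For the first inclusion, $i$ maps $\Kompakt[\kappa][\mbfU]{\mathcal{C}}$ into $\Kompakt[\kappa][\mbfUplus]{\succof{\mathcal{C}}} \subseteq \Kompakt[\lambda][\mbfUplus]{\succof{\mathcal{C}}}$, which is closed under $\lambda$-small colimits, so $i$ maps all of $\Kompakt[\lambda][\mbfU]{\mathcal{C}}$ into $\Kompakt[\lambda][\mbfUplus]{\succof{\mathcal{C}}}$. For essential surjectivity, let $\mathcal{H}$ be the full subcategory of $\succof{\mathcal{C}}$ on the objects isomorphic to $i X$ for some $\tuple{\lambda, \mbfU}$-compact object $X$ of $\mathcal{C}$; by lifting a $\lambda$-small diagram in $\mathcal{H}$ to $\mathcal{C}$ along the fully faithful $i$, forming its colimit (by $\mbfU$-cocompleteness of $\mathcal{C}$), and noting the colimit is $\tuple{\lambda, \mbfU}$-compact (\autoref{lem:small.objects}) and preserved by $i$ (\autoref{prop:accessible.extensions.preserve.colimits}), one checks $\mathcal{H}$ is closed under $\lambda$-small colimits in $\succof{\mathcal{C}}$; since $\mathcal{H}$ is essentially $\mbfU$-small and contains $\Kompakt[\kappa][\mbfUplus]{\succof{\mathcal{C}}}$, \autoref{prop:compact.objects.in.locally.presentable.categories} forces $\Kompakt[\lambda][\mbfUplus]{\succof{\mathcal{C}}} \subseteq \mathcal{H}$.

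For (ii), set $\mu = \card{\mbfU}$. Every object of $\mathcal{C}$ is $\tuple{\lambda, \mbfU}$-compact for some regular cardinal $\lambda$ in $\mbfU$ (see the remark after the definition of a $\kappa$-accessible $\mbfU$-category), and any such $\lambda$ satisfies $\lambda < \mu$ because $\lambda \in \mbfU$; so by~(i), $i$ sends that object to a $\tuple{\lambda, \mbfUplus}$-compact, hence $\tuple{\mu, \mbfUplus}$-compact (\autoref{lem:compactness.rank}(ii)), object of $\succof{\mathcal{C}}$. Thus $i$ factors through $\Kompakt[\mu][\mbfUplus]{\succof{\mathcal{C}}} \embedinto \succof{\mathcal{C}}$, and the resulting functor $\mathcal{C} \to \Kompakt[\mu][\mbfUplus]{\succof{\mathcal{C}}}$ is fully faithful because $i$ is. For essential surjectivity I would repeat the argument of~(i) with $\mu$ in place of $\lambda$, now taking $\mathcal{H}$ to be the full subcategory of $\succof{\mathcal{C}}$ on the objects isomorphic to some $i X$ with $X$ in $\mathcal{C}$: it is essentially $\mbfUplus$-small, contains $\Kompakt[\kappa][\mbfUplus]{\succof{\mathcal{C}}}$, and is closed under $\mu$-small colimits in $\succof{\mathcal{C}}$ --- here one also uses the elementary fact that a $\mu$-small diagram in the locally $\mbfU$-small category $\mathcal{C}$ is $\mbfU$-small, so that \autoref{prop:accessible.extensions.preserve.colimits} applies --- whence \autoref{prop:compact.objects.in.locally.presentable.categories}, with its $\lambda$ taken to be $\mu$ (which is regular in $\mbfUplus$), shows every $\tuple{\mu, \mbfUplus}$-compact object of $\succof{\mathcal{C}}$ lies in $\mathcal{H}$.

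For (iii), part~(ii) makes the induced functor $\mathcal{C} \to \Kompakt[\mu][\mbfUplus]{\succof{\mathcal{C}}}$ an equivalence; and $\succof{\mathcal{C}}$ is a locally $\kappa$-presentable, hence locally $\mu$-presentable, hence $\mu$-accessible, $\mbfUplus$-category (\autoref{lem:accessibility.index.of.locally.presentable.categories}(i)), so \autoref{thm:classification.of.accessible.categories}(ii) gives that $\Ind[\mu][\mbfUplus]{\Kompakt[\mu][\mbfUplus]{\succof{\mathcal{C}}}} \to \succof{\mathcal{C}}$ is fully faithful and essentially surjective on objects; transporting along the equivalence of~(ii) identifies this functor --- using the universal property in \autoref{thm:ind-completion}(v) --- with the $\tuple{\mu, \mbfUplus}$-accessible functor $\Ind[\mu][\mbfUplus]{\mathcal{C}} \to \succof{\mathcal{C}}$ induced by $i$. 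The main obstacle is the essential-surjectivity half of~(i) and its reprise in~(ii): one must verify that $\mathcal{H}$ is closed under $\lambda$-small (\resp $\mu$-small) colimits \emph{as computed in $\succof{\mathcal{C}}$}, which is precisely where full faithfulness of $i$, $\mbfU$-cocompleteness of $\mathcal{C}$, and --- most importantly --- \autoref{prop:accessible.extensions.preserve.colimits} must all be brought to bear at once.
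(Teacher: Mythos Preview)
Your proposal is correct and follows essentially the same approach as the paper: the key step in (i) is the identification, via \autoref{prop:compact.objects.in.locally.presentable.categories}, of the replete image of $\Kompakt[\lambda][\mbfU]{\mathcal{C}}$ under $i$ with $\Kompakt[\lambda][\mbfUplus]{\succof{\mathcal{C}}}$, using full faithfulness of $i$ and preservation of $\mbfU$-small colimits (\autoref{prop:accessible.extensions.preserve.colimits}); and (iii) is derived from (ii) via \autoref{thm:classification.of.accessible.categories}, exactly as in the paper. The only minor divergence is in the essential-surjectivity half of (ii): the paper instead invokes the strong inaccessibility of $\mu$ to argue that every $\tuple{\mu, \mbfUplus}$-compact object is already $\tuple{\lambda, \mbfUplus}$-compact for some regular $\lambda < \mu$, thereby reducing directly to (i), whereas you re-run the closure argument of (i) with $\mu$ in place of $\lambda$ --- both routes are valid and rest on the same ingredients.
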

\begin{proof}
(i). Since $i : \mathcal{C} \to \mathcal{C}^+$ is a $\tuple{\kappa, \mbfU}$-accessible functor, it is certainly also $\tuple{\lambda, \mbfU}$-accessible, by \autoref{lem:accessibility.index.of.locally.presentable.categories}. It is therefore enough to show that $i$ restricts to a functor $\Kompakt[\kappa][\mbfU]{\mathcal{C}} \to \Kompakt[\kappa][\mbfUplus]{\succof{\mathcal{C}}}$ that is (fully faithful and) essentially surjective on objects.

\Autoref{prop:compact.objects.in.locally.presentable.categories} says $\Kompakt[\lambda][\mbfU]{\mathcal{C}}$ is the smallest replete full subcategory of $\mathcal{C}$ that contains $\Kompakt[\kappa][\mbfU]{\mathcal{C}}$ and is closed in $\mathcal{C}$ under colimits for $\lambda$-small diagrams, therefore the replete closure of the image of $\Kompakt[\lambda][\mbfU]{\mathcal{C}}$ must be the smallest replete full subcategory of $\succof{\mathcal{C}}$ that contains $\Kompakt[\kappa][\mbfUplus]{\succof{\mathcal{C}}}$ and is closed in $\succof{\mathcal{C}}$ under colimits for $\lambda$-small diagrams, since $i$ is fully faithful and preserves colimits for all $\mbfU$-small diagrams. This proves the claim.

\bigskip\noindent
(ii). Since every object in $\mathcal{C}$ is $\tuple{\lambda, \mbfU}$-compact for some regular cardinal $\lambda < \mu$, claim (i) implies that the image of $i : \mathcal{C} \to \succof{\mathcal{C}}$ is contained in $\Kompakt[\mu][\mbfUplus]{\mathcal{C}}$. To show $i$ is essentially surjective onto $\Kompakt[\mu][\mbfUplus]{\mathcal{C}}$, we simply have to observe that the inaccessibility of $\mu$ (\autoref{prop:universe.nesting}) and \autoref{prop:compact.objects.in.locally.presentable.categories} imply that, for $C'$ any $\tuple{\mu, \mbfUplus}$-compact object in $\succof{\mathcal{C}}$, there exists a regular cardinal $\lambda < \mu$ such that $C'$ is also a $\tuple{\lambda, \mbfUplus}$-compact object, which reduces the question to claim (i).

\bigskip\noindent
(iii). This is an immediate corollary of claim (ii) and the classification theorem (\ref{thm:classification.of.accessible.categories}) applied to $\succof{\mathcal{C}}$, considered as a $\tuple{\mu, \mbfUplus}$-accessible category.
\end{proof}

\begin{remark}
Although the fact $i : \mathcal{C} \to \succof{\mathcal{C}}$ that preserves limits and colimits for all $\mbfU$-small diagrams in $\mathcal{C}$ is a formal consequence of the theorem above (via \eg \autoref{thm:ind-completion}), it is not clear whether the theorem can be proved without already knowing this.
\end{remark}

\begin{cor}
If $\mathbb{B}$ is a $\mbfU$-small category and has colimits for all $\kappa$-small diagrams, and $\mu$ is the cardinality of $\mbfU$, then the canonical $\tuple{\mu, \mbfUplus}$-accessible functor $\Ind[\mu][\mbfUplus]{\Ind[\kappa][\mbfU]{\mathbb{B}}} \to \Ind[\kappa][\mbfUplus]{\mathbb{B}}$ is fully faithful and essentially surjective on objects. \qed 
\end{cor}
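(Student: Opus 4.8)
The plan is to recognise this corollary as the special case of \autoref{thm:accessible.extension.properties}(iii) in which the accessible extension is the canonical comparison $\Ind[\kappa][\mbfU]{\mathbb{B}} \to \Ind[\kappa][\mbfUplus]{\mathbb{B}}$. Accordingly, write $\mathcal{C} = \Ind[\kappa][\mbfU]{\mathbb{B}}$ and $\succof{\mathcal{C}} = \Ind[\kappa][\mbfUplus]{\mathbb{B}}$, and let $i : \mathcal{C} \to \succof{\mathcal{C}}$ be the $\tuple{\kappa, \mbfU}$-accessible functor obtained by extending the composite $\mathbb{B} \to \Ind[\kappa][\mbfUplus]{\mathbb{B}}$ along $\mathbb{B} \to \Ind[\kappa][\mbfU]{\mathbb{B}}$ via the universal property of \autoref{thm:ind-completion}. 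Two things then have to be checked: that $\mathcal{C}$ is a locally $\kappa$-presentable $\mbfU$-category, and that $i$ is a $\tuple{\kappa, \mbfU, \mbfUplus}$-accessible extension.

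For the first point, $\mathbb{B}$ is $\mbfU$-small and has colimits for all $\kappa$-small diagrams, so the classification theorem for locally presentable categories (\ref{thm:classification.of.locally.presentable.categories}), in the form of the implication from (ii) to (i) applied with the identity functor $\Ind[\kappa][\mbfU]{\mathbb{B}} \to \mathcal{C}$, shows that $\mathcal{C}$ is a locally $\kappa$-presentable $\mbfU$-category (local $\mbfU$-smallness being part of \autoref{thm:ind-completion}). The identical argument over $\mbfUplus$ identifies $\succof{\mathcal{C}}$ as a locally $\kappa$-presentable $\mbfUplus$-category, although for \autoref{thm:accessible.extension.properties} we only need the source of $i$ to be locally presentable.

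For the second point, observe that $\mathbb{B}$, having colimits for all $\kappa$-small diagrams, in particular has coequalisers of parallel pairs, hence idempotents split in $\mathbb{B}$. Therefore the remark immediately following the definition of accessible extension applies: invoking \autoref{prop:compact.objects.in.accessible.categories}, we see that $i$ carries $\tuple{\kappa, \mbfU}$-compact objects to $\tuple{\kappa, \mbfUplus}$-compact objects, and that the induced functor $\Kompakt[\kappa][\mbfU]{\mathcal{C}} \to \Kompakt[\kappa][\mbfUplus]{\succof{\mathcal{C}}}$ is, up to equivalence, the identity on $\mathbb{B}$ --- both sides being the idempotent-completion of $\mathbb{B}$, which is $\mathbb{B}$ itself --- and so is fully faithful and essentially surjective on objects. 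Hence $i$ is a $\tuple{\kappa, \mbfU, \mbfUplus}$-accessible extension.

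With these two points in hand, \autoref{thm:accessible.extension.properties}(iii) applies verbatim and asserts that the induced $\tuple{\mu, \mbfUplus}$-accessible functor $\Ind[\mu][\mbfUplus]{\mathcal{C}} \to \succof{\mathcal{C}}$ --- which is precisely the canonical functor $\Ind[\mu][\mbfUplus]{\Ind[\kappa][\mbfU]{\mathbb{B}}} \to \Ind[\kappa][\mbfUplus]{\mathbb{B}}$ of the statement --- is fully faithful and essentially surjective on objects, as claimed. I do not anticipate a genuine obstacle here: everything of substance is already packaged in \autoref{prop:compact.objects.in.accessible.categories} and \autoref{thm:accessible.extension.properties}, and the only point requiring a moment's care is the observation that the hypothesis ``idempotents split in $\mathbb{B}$'' needed by the cited remark holds automatically under the present assumption on $\mathbb{B}$.
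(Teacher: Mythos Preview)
Your proposal is correct and is precisely the intended argument: the paper records this corollary with a bare \qedsymbol\ because it is meant to be read off directly from \autoref{thm:accessible.extension.properties}(iii) applied to the canonical extension $\Ind[\kappa][\mbfU]{\mathbb{B}} \to \Ind[\kappa][\mbfUplus]{\mathbb{B}}$, and you have simply spelled out the two verifications (local $\kappa$-presentability via \autoref{thm:classification.of.locally.presentable.categories}, and the accessible-extension property via the remark following the definition, using that $\kappa$-small colimits in $\mathbb{B}$ give splittings of idempotents).
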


\begin{thm}[Stability of pointwise Kan extensions]
\label{thm:stability.of.pointwise.Kan.extensions}
Let $\mbfUplus$ be a pre-universe, let $\mathcal{A}$ be a $\mbfUplus$-small category, let $\mathcal{C}, \mathcal{D}, \succof{\mathcal{C}}, \succof{\mathcal{D}}$ be locally $\mbfUplus$-small categories, let $F : \mathcal{A} \to \mathcal{C}$ and $G : \mathcal{A} \to \mathcal{D}$ be functors, and let $i : \mathcal{C} \to \succof{\mathcal{C}}$ and $j : \mathcal{D} \to \succof{\mathcal{D}}$ be fully faithful functors. Consider the following (not necessarily commutative) diagram:
\[
\begin{tikzcd}
\mathcal{A} \dar[swap]{F} \rar{G} &
\mathcal{D} \rar{j} &
\succof{\mathcal{D}} \\
\mathcal{C} \dar[swap]{i} \urar[swap]{H} \\
\succof{\mathcal{C}} \arrow[swap]{uurr}{\succof{H}}
\end{tikzcd}
\]
\begin{enumerate}[(i)]
\item If $\succof{H}$ is a pointwise right Kan extension of $j G$ along $i F$, and $\succof{H} i \cong j H$, then $H$ is a pointwise right Kan extension of $G$ along $F$.

\item Suppose $\mathcal{C}$ is $\mbfUplus$-small and $j H$ is a pointwise right Kan extension of $j G$ along $F$. If $\succof{H}$ is a pointwise right Kan extension of $j H$ along $i$, then the counit $\succof{H} i \hoto j H$ is a natural isomorphism, and $\succof{H}$ is also a pointwise right Kan extension of $j G$ along $i F$; conversely, if $\succof{H}$ is a pointwise right Kan extension of $j G$ along $i F$, then it is also a pointwise right Kan extension of $j H$ along $i$.

\item If $\mbfU$ is a pre-universe such that $\mathcal{A}$ is $\mbfU$-small and $j$ preserves limits for all $\mbfU$-small diagrams, and $H$ is a pointwise right Kan extension of $G$ along $F$, then a pointwise right Kan extension of $j G$ along $i F$ can be computed as a pointwise right Kan extension of $j H$ along $i$ (if either one exists).
\end{enumerate}
Dually:
\begin{enumerate}[(i\prime)]
\item If $\succof{H}$ is a pointwise left Kan extension of $j G$ along $i F$, and $\succof{H} i \cong j H$, then $H$ is a pointwise left Kan extension of $G$ along $F$.

\item Suppose $\mathcal{C}$ is $\mbfUplus$-small and $j H$ is a pointwise left Kan extension of $j G$ along $F$. If $j H$ is a pointwise left Kan extension of $j G$ along $F$, and $\succof{H}$ is a pointwise left Kan extension of $j H$ along $i$, then the unit $j H \hoto \succof{H} i$ is a natural isomorphism, and $\succof{H}$ is also a pointwise left Kan extension of $j G$ along $i F$.

\item If $\mbfU$ is a pre-universe such that $\mathcal{A}$ is $\mbfU$-small and $j$ preserves colimits for all $\mbfU$-small diagrams, and $H$ is a pointwise left Kan extension of $G$ along $F$, then a pointwise left Kan extension of $j G$ along $i F$ can be computed as a pointwise left Kan extension of $j H$ along $i$ (if either one exists).
\end{enumerate}
\end{thm}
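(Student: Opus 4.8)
The plan is to reduce the whole statement to three ingredients: the pointwise formula expressing a Kan extension as a limit over comma categories, two elementary facts about fully faithful functors, and the composition law for (pointwise) Kan extensions. Since $i$ is fully faithful, for every object $C$ of $\mathcal{C}$ the canonical functor $\commacat{C}{F} \to \commacat{iC}{iF}$, $\tuple{A,f} \mapsto \tuple{A, i f}$, is an isomorphism of categories, and $\commacat{iC}{i}$ has $\tuple{C, \id_{iC}}$ as an initial object; since $j$ is fully faithful, a cone in $\mathcal{D}$ is limiting as soon as its image under $j$ is limiting in $\succof{\mathcal{D}}$ (and dually for colimits). From the theory of Kan extensions I will use that ordinary right Kan extensions compose, $\RKan[iF](jG) \cong \RKan[i](\RKan[F](jG))$ whenever the right-hand side exists, together with the sharper \emph{pointwise} composition law: if $\RKan[F](jG)$ and $\RKan[i](\RKan[F](jG))$ are both pointwise then so is $\RKan[iF](jG)$ and all three coincide — proved by noting that the representable functors $\succof{\mathcal{D}}(\succof{D},\blank)$ detect pointwise right Kan extensions and reducing to ordinary composition of set-valued Kan extensions; the same argument run backwards shows that if $\RKan[F](jG)$ and $\RKan[iF](jG)$ are pointwise then so is $\RKan[i](\RKan[F](jG))$.

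For (i), evaluate the hypothesised pointwise right Kan extension $\succof{H}$ at an object of the form $iC$. Using the comma-category isomorphism above, $\succof{H}(iC)$ is the limit in $\succof{\mathcal{D}}$ of $\commacat{C}{F} \xrightarrow{\mathrm{proj}} \mathcal{A} \xrightarrow{G} \mathcal{D} \xrightarrow{j} \succof{\mathcal{D}}$, and the hypothesis $\succof{H}i \cong jH$ identifies this object with $jH(C)$. Since $j$ is full each leg of that limiting cone is $j$ of a unique morphism, and faithfulness assembles these into a cone on $H(C)$ over $\commacat{C}{F} \xrightarrow{\mathrm{proj}} \mathcal{A} \xrightarrow{G} \mathcal{D}$; as $j$ reflects limits this cone is limiting, and a routine check that it is the cone induced by a natural transformation $HF \hoto G$ exhibits $H$ as a pointwise right Kan extension of $G$ along $F$. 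Statement (i$\prime$) is obtained by passing to opposite categories.

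For (ii) the $\mbfUplus$-smallness of $\mathcal{C}$ guarantees that the comma categories appearing are $\mbfUplus$-small, so the limits below are legitimate. Suppose $\succof{H}$ is a pointwise right Kan extension of $jH$ along $i$; evaluating at $iC$ and using that $\commacat{iC}{i}$ has an initial object gives $\succof{H}(iC) \cong jH(C)$ with the comparison being the counit component, so $\succof{H}i \hoto jH$ is a natural isomorphism. Since by hypothesis $jH$ is a pointwise right Kan extension of $jG$ along $F$, the pointwise composition law applied to $\succof{H} \cong \RKan[i](\RKan[F](jG))$ shows $\succof{H}$ is also a pointwise right Kan extension of $jG$ along $iF$. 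Conversely, if $\succof{H}$ is a pointwise right Kan extension of $jG$ along $iF$, then, $jH$ being pointwise over $F$, the backward form of the composition law shows $\succof{H} \cong \RKan[i](jH)$ is a pointwise right Kan extension of $jH$ along $i$. Statement (ii$\prime$) follows by duality.

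For (iii) everything again comes down to the composition law, so the real content is to show that $jH$ is a pointwise right Kan extension of $jG$ along $F$; granting this, $\RKan[iF](jG) \cong \RKan[i](\RKan[F](jG)) \cong \RKan[i](jH)$, with pointwise-ness transported in whichever direction is needed, which is exactly the assertion. To obtain $jH \cong \RKan[F](jG)$ pointwise I would use that $\mathcal{A}$ is $\mbfU$-small to re-present the limit $H(C) = \prolim_{\commacat{C}{F}}(G \circ \mathrm{proj})$, organised over the $\mbfU$-small index $\mathcal{A}$ as an end, as a limit that is genuinely $\mbfU$-small, so that the hypothesis that $j$ preserves $\mbfU$-small limits forces $j$ to preserve it and hence $jH(C)$ to be the limit computing $\RKan[F](jG)(C)$. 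This size bookkeeping is the step I expect to require the most care: one must check that, because $\mathcal{A}$ is $\mbfU$-small, the limits computing the pointwise right Kan extension $H$ really do reduce to $\mbfU$-small limits, so that the preservation hypothesis on $j$ applies — not merely that the comma categories $\commacat{C}{F}$ are $\mbfUplus$-small. Everything else is formal manipulation of (co)limits and the universal properties of Kan extensions, with the dual preservation hypothesis used for (iii$\prime$).
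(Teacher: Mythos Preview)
Your treatment of (i) and (ii) is correct and is essentially the paper's own argument in different dress. Where the paper writes the pointwise formula as a weighted limit $\succof{H}(C') \cong \kwlim[\mathcal{A}]{\Hom[\succof{\mathcal{C}}]{C'}{i F}}{j G}$ and then performs an explicit Fubini/co-Yoneda end calculation to interchange the roles of $\mathcal{A}$ and $\mathcal{C}$, you use the equivalent comma-category description and package the same interchange as an abstract ``pointwise composition law'' detected by the representables $\Hom[\succof{\mathcal{D}}]{D'}{\blank}$. Unwinding your composition law yields exactly the paper's chain of end isomorphisms, so the two proofs coincide.

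There is, however, a genuine gap in your (iii). Rewriting $H(C) \cong \prolim_{\commacat{C}{F}} G$ as the end $\int_{A : \mathcal{A}} \Hom[\mathcal{C}]{C}{F A} \cotens G(A)$ does place the end over the $\mbfU$-small index $\mathcal{A}$, but each integrand is a cotensor by the hom-set $\Hom[\mathcal{C}]{C}{F A}$, and under the standing hypotheses $\mathcal{C}$ is only locally $\mbfUplus$-small. So these cotensors are products indexed by $\mbfUplus$-sets, not $\mbfU$-sets, and the resulting conical limit is still only $\mbfUplus$-small --- it is indexed precisely by the category of elements of the weight, i.e.\ by $\commacat{C}{F}$, which you yourself note may fail to be $\mbfU$-small. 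In short, $\mathcal{A}$ being $\mbfU$-small does \emph{not} by itself force the limits computing $H(C)$ to be $\mbfU$-small; one also needs $\mathcal{C}$ to be locally $\mbfU$-small. The paper's own one-line proof of (iii) is studiedly vague on this point (``functors that preserve sufficiently large limits''), and in the corollary where (iii) is actually applied, $j$ preserves \emph{all} limits, so one is free to enlarge $\mbfU$ until the comma categories become $\mbfU$-small. Your instinct to flag this step as the delicate one was right; the specific fix you propose does not close the gap.
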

\begin{proof}
(i). We have the following explicit description of $\succof{H} : \succof{\mathcal{C}} \to \succof{\mathcal{D}}$ as a weighted limit:\footnote{See Theorem 4.6 in \citep{Kelly:2005}.}
\[
\succof{H} \argp{C'} \cong \kwlim[\mathcal{A}]{\Hom[\succof{\mathcal{C}}]{C'}{i F}}{j G} 
\]
Since $i$ is fully faithful, the weights $\Hom[\mathcal{C}]{C}{F}$ and $\Hom[\succof{\mathcal{C}}]{i C}{i F}$ are naturally isomorphic, hence,
\[
j H \argp{C} \cong \succof{H} \argp{i C} \cong \kwlim[\mathcal{A}]{\Hom[\succof{\mathcal{C}}]{i C}{i F}}{j G} \cong \kwlim[\mathcal{A}]{\Hom[\mathcal{C}]{C}{F}}{j G}
\]
but, since $j$ is fully faithful, $j$ reflects \emph{all} weighted limits, therefore $H$ must be a pointwise right Kan extension of $G$ along $F$.

\bigskip\noindent
(ii). Let $\cat{\Setplus}$ be the category of $\mbfUplus$-sets. Using the interchange law\footnote{See \citep[\Chap IX, \Sect 8]{CWM}.} and the end version of the Yoneda lemma, we obtain the following natural bijections:
\begin{align*}
\Hom[\succof{\mathcal{D}}]{D'}{\succof{H} \argp{C'}}
& \cong \Hom[\succof{\mathcal{D}}]{D'}{\kwlim[\mathcal{C}]{\Hom[\succof{\mathcal{C}}]{C'}{i}}{j H}} \\
& \cong \int_{C : \mathcal{C}} \Hom[\Setplus]{\Hom[\succof{\mathcal{C}}]{C'}{i C}}{\Hom[\succof{\mathcal{D}}]{D'}{j H C}} \\
& \cong \int_{C : \mathcal{C}} \Hom[\Setplus]{\Hom[\succof{\mathcal{C}}]{C'}{i C}}{\Hom[\succof{\mathcal{D}}]{D'}{\kwlim[\mathcal{A}]{\Hom[\mathcal{C}]{C}{F}}{j G}}} \\
& \cong \int_{C : \mathcal{C}} \int_{A : \mathcal{A}} \Hom[\Setplus]{\Hom[\succof{\mathcal{C}}]{C'}{i C}}{\Hom[\Setplus]{\Hom[\mathcal{C}]{C}{F A}}{\Hom[\succof{\mathcal{D}}]{D'}{j G A}}} \\
& \cong \int_{C : \mathcal{C}} \int_{A : \mathcal{A}} \Hom[\Setplus]{\Hom[\mathcal{C}]{C}{F A}}{\Hom[\Setplus]{\Hom[\succof{\mathcal{C}}]{C'}{i C}}{\Hom[\succof{\mathcal{D}}]{D'}{j G A}}} \\
& \cong \int_{A : \mathcal{A}} \int_{C : \mathcal{C}} \Hom[\Setplus]{\Hom[\mathcal{C}]{C}{F A}}{\Hom[\Setplus]{\Hom[\succof{\mathcal{C}}]{C'}{i C}}{\Hom[\succof{\mathcal{D}}]{D'}{j G A}}} \\
& \cong \int_{A : \mathcal{A}} \Hom[\Setplus]{\Hom[\succof{\mathcal{C}}]{C'}{i F A}}{\Hom[\succof{\mathcal{D}}]{D'}{j G A}} \\
& \cong \Hom[\succof{\mathcal{D}}]{D'}{\kwlim[\mathcal{A}]{\Hom[\succof{\mathcal{C}}]{C'}{i F}}{j G}}
\end{align*}
Thus, $\succof{H}$ is a pointwise right Kan extension of $j G$ along $i F$ if and only if $\succof{H}$ is a pointwise right Kan extension of $j H$ along $i$. The fact that the counit $\succof{H} i \hoto j H$ is a natural isomorphism follows from the fact that $i$ is fully faithful.\footnote{See Proposition 4.23 in \citep{Kelly:2005}.}

\bigskip\noindent
(iii). Apply the fact that pointwise Kan extensions are preserved by functors that preserve sufficiently large limits to claim (ii).
\end{proof}

\begin{cor}
\needspace{3\baselineskip}
Let $\mbfU$ and $\mbfUplus$ be universes, with $\mbfU \in \mbfUplus$, and let $\kappa$ and $\lambda$ be regular cardinals in $\mbfU$. Suppose:
\begin{itemize}
\item $\mathcal{C}$ is a locally $\kappa$-presentable $\mbfU$-category.

\item $\mathcal{D}$ is a locally $\lambda$-presentable $\mbfU$-category.

\item $\succof{\mathcal{C}}$ is a locally $\kappa$-presentable $\mbfUplus$-category.

\item $\succof{\mathcal{D}}$ is a locally $\lambda$-presentable $\mbfUplus$-category.
\end{itemize}
Let $F : \mathcal{A} \to \mathcal{C}$ and $G : \mathcal{A} \to \mathcal{D}$ be functors, let $i : \mathcal{C} \to \succof{\mathcal{C}}$ be a $\tuple{\kappa, \mbfU, \mbfUplus}$-\allowhyphens accessible extension, and let $j : \mathcal{D} \to \succof{\mathcal{D}}$ be a $\tuple{\lambda, \mbfU, \mbfUplus}$-accessible extension. Consider the following (not necessarily commutative) diagram:
\[
\begin{tikzcd}
\mathcal{A} \dar[swap]{F} \rar{G} &
\mathcal{D} \rar{j} &
\succof{\mathcal{D}} \\
\mathcal{C} \dar[swap]{i} \urar[swap]{H} \\
\succof{\mathcal{C}} \arrow[swap]{uurr}{\succof{H}}
\end{tikzcd}
\]
\begin{enumerate}[(i)]
\item Assuming $\mathcal{A}$ is $\mbfU'$-small for some pre-universe $\mbfU'$, if $H$ is a  pointwise right Kan extension of $G$ along $F$, then $j H$ is a pointwise right Kan extension of $j G$ along $F$; and if $\succof{H}$ is a  pointwise right Kan extension of $j H$ along $i$, then $\succof{H}$ is also a pointwise right Kan extension of $j G$ along $i F$.

\item Assuming $\mathcal{A}$ is $\mbfU$-small, if $H$ is a pointwise left Kan extension of $G$ along $F$, then $j H$ is a pointwise left Kan extension of $j G$ along $F$; and if $\succof{H}$ is a pointwise left Kan extension of $j H$ along $i$, then $\succof{H}$ is also a pointwise left Kan extension of $j G$ along $i F$.
\end{enumerate}
\end{cor}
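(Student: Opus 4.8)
The plan is to reduce both statements to \autoref{thm:stability.of.pointwise.Kan.extensions}, parts (iii) and (iii\prime), which were set up for exactly this purpose; essentially all one has to do is verify that the accessible extensions $i$ and $j$ satisfy the hypotheses of those parts. First I would collect the relevant facts about $i$ and $j$: by \autoref{prop:accessible.extension.properties} both are fully faithful, and, because $\mathcal{D}$ is locally presentable, $j : \mathcal{D} \to \succof{\mathcal{D}}$ preserves \emph{every} limit that exists in $\mathcal{D}$; on the other hand \autoref{prop:accessible.extensions.preserve.colimits} only tells us that $j$ preserves colimits for $\mbfU$-small diagrams in $\mathcal{D}$. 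This asymmetry is precisely what accounts for the weaker smallness hypothesis on $\mathcal{A}$ in claim (i).

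For claim (i): since $\mathcal{A}$ is $\mbfU'$-small, \autoref{prop:universe.nesting} tells us $\mbfU'$ and $\mbfUplus$ are comparable, so I would take $\mbfV$ to be the larger of the two; then $\mathcal{A}$ is $\mbfV$-small and $\mathcal{C}, \mathcal{D}, \succof{\mathcal{C}}, \succof{\mathcal{D}}$ are all locally $\mbfV$-small, and, since $j$ preserves \emph{all} limits, the standing hypotheses of \autoref{thm:stability.of.pointwise.Kan.extensions} and those of part (iii) hold with $\mbfV$ in the role of its pre-universe parameters. Hence if $H$ is a pointwise right Kan extension of $G$ along $F$: since limit-preserving functors preserve pointwise Kan extensions, $j H$ is a pointwise right Kan extension of $j G$ along $F$; and part (iii) says a pointwise right Kan extension of $j G$ along $i F$ may be computed as a pointwise right Kan extension of $j H$ along $i$, so that whenever $\succof{H}$ is the latter it is also the former.

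For claim (ii): now I cannot enlarge the universe at will, since $j$ is known to preserve only $\mbfU$-small colimits; so instead I would apply \autoref{thm:stability.of.pointwise.Kan.extensions} with $\mbfUplus$ as the ambient pre-universe (legitimate, because $\mathcal{A}$ is $\mbfU$-small hence $\mbfUplus$-small and the four categories are locally $\mbfUplus$-small) and with $\mbfU$ itself as the auxiliary pre-universe of part (iii\prime). Its hypotheses hold: $\mathcal{A}$ is $\mbfU$-small by assumption, and $j$ preserves colimits for all $\mbfU$-small diagrams by \autoref{prop:accessible.extensions.preserve.colimits}. Thus, from $H$ a pointwise left Kan extension of $G$ along $F$ we obtain that $j H$ is a pointwise left Kan extension of $j G$ along $F$ (because $j$ preserves the $\mbfU$-small colimits computing it) and that a pointwise left Kan extension of $j G$ along $i F$ may be computed as one of $j H$ along $i$, which is the assertion.

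I do not expect a genuine obstacle: the mathematical content is entirely in \autoref{thm:stability.of.pointwise.Kan.extensions}. The one point demanding care is the bookkeeping with universe parameters — in particular, noting that an accessible extension between locally presentable categories preserves \emph{all} limits but only \emph{$\mbfU$-small} colimits, which is why claim (ii) must require $\mathcal{A}$ to be $\mbfU$-small whereas claim (i) may allow $\mathcal{A}$ to be $\mbfU'$-small for an arbitrary pre-universe $\mbfU'$, together with the (harmless) step of choosing in claim (i) a pre-universe large enough to contain both $\mbfU'$ and $\mbfUplus$.
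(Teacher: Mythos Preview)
Your proposal is correct and follows essentially the same approach as the paper: the paper's own proof simply says to apply \autoref{thm:stability.of.pointwise.Kan.extensions} together with the facts that $i$ and $j$ preserve limits for \emph{all} diagrams (\autoref{prop:accessible.extension.properties}) and colimits for $\mbfU$-small diagrams (\autoref{prop:accessible.extensions.preserve.colimits}). Your explicit universe bookkeeping --- in particular choosing $\mbfV = \max(\mbfU', \mbfUplus)$ via \autoref{prop:universe.nesting} for claim (i), and your explanation of the asymmetry between the smallness hypotheses in (i) and (ii) --- is a welcome elaboration of what the paper leaves implicit.
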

\begin{proof}
Use the theorem and the fact that $i$ and $j$ preserve limits for \emph{all} diagrams (\autoref{prop:accessible.extension.properties}) and colimits for $\mbfU$-small diagrams (\autoref{prop:accessible.extensions.preserve.colimits}).
\end{proof}
\section{Future work}

\makenumpar
One of motivating questions behind this paper was the following:
\begin{quote}
Let $\mathbb{B}$ be a $\mbfU$-small category with colimits for $\kappa$-small diagrams. Given a combinatorial model structure on $\mathcal{M} = \Ind[\kappa][\mbfU]{\mathbb{B}}$, does it extend to a combinatorial model structure on $\succof{\mathcal{M}} = \Ind[\kappa][\mbfUplus]{\mathbb{B}}$?
\end{quote}
More precisely, if $\mathcal{I}$ is a $\mbfU$-set of generating cofibrations and $\mathcal{J}$ is a $\mbfU$-set of generating trivial cofibrations in $\mathcal{M}$, is there a model structure on $\succof{\mathcal{M}}$ that is cofibrantly generated by $\mathcal{I}$ and $\mathcal{J}$?

\needspace{2.5\baselineskip}
Using \autoref{prop:accessible.extensions.preserve.colimits} and \autoref{thm:accessible.extension.properties}, it is easy to establish these facts:
\begin{enumerate}[(i)]
\item The inclusion $\mathcal{M} \embedinto \succof{\mathcal{M}}$ preserves the functorial factorisations constructed by Quillen's small object argument.\footnote{See \eg Lemma 3 in \citep[\Chap II, \Sect 3]{Quillen:1967}.}

\item Every $\mathcal{I}$-injective morphism in $\succof{\mathcal{M}}$ is also a $\mathcal{J}$-injective morphism.

\item Every $\mathcal{J}$-cofibration in $\succof{\mathcal{M}}$ is also an $\mathcal{I}$-cofibration.
\end{enumerate}
Now, define $\succof{\mathcal{W}}$ to be the collection of all morphisms in $\succof{\mathcal{M}}$ of the form $q \circ j$ for a $\mathcal{J}$-cofibration $j$ and a $\mathcal{I}$-injective morphism $q$. Then:
\begin{enumerate}[resume*]
\item Any $\mathcal{I}$-cofibration in $\succof{\mathcal{M}}$ that is also in $\succof{\mathcal{W}}$ is a (retract of a) $\mathcal{J}$-cofibration.

\item Any $\mathcal{J}$-injective morphism in $\succof{\mathcal{M}}$ that is also in $\succof{\mathcal{W}}$ is a (retract of a) $\mathcal{I}$-injective morphism.
\end{enumerate}
Thus, as soon as we know that $\succof{\mathcal{W}}$ has the 2-out-of-3 property in $\succof{\mathcal{M}}$, we would have a (cofibrantly-generated) model structure on $\succof{\mathcal{M}}$ extending the model structure on $\mathcal{M}$;\footnote{The 2-out-of-3 property plus factorisations imply that $\succof{\mathcal{W}}$ is closed under retracts; see Lemma 14.2.5 in \citep{May-Ponto:2012}.} unfortunately, it is not clear to the author how this can be done.\footnote{The question has since been answered in the affirmative: see \citep{Low:2014a}.}

\makenumpar
It would also be interesting to investigate strengthenings of the results in this paper for weaker notions of universe. For example, let us say that $\mbfU$ is a \strong{weak universe} if $\mbfU$ satisfies axioms 1--3 and 5 of \autoref{dfn:universe} \emph{plus} the following axiom:
\begin{enumerate}
\item[4\textsuperscript{$-$}.] If $x \in \mbfU$, then $\bigcup_{y \in x} y \in \mbfU$.
\end{enumerate}
In Mac Lane set theory, if $\mbfU$ is a weak universe, then $\mbfU$ is a model of Zermelo set theory\footnote{--- \ie Zermelo--Fraenkel set theory \emph{minus} replacement.} with global choice and so the category of $\mbfU$-sets is a model of Lawvere's ETCS. Moreover, in plain ZFC, every set is a member of some weak universe: indeed, for every limit ordinal $\alpha > \omega$, the set $\mbfV_\alpha$ is a weak universe.\footnote{However, note that $\mbfV_{\omega + \omega}$ may not exist in Mac Lane set theory!} If it turns out that accessible extensions are still well-behaved in this context, then we would have an adequate framework for studying the theory of categories by category-theoretic means without having to appeal to large cardinal axioms.

\makenumpar
On the other hand, there are also less promising notions of universe. From the perspective of a set theorist, one natural question to ask is how category theory in an inner model of ZFC, such as Gödel's constructible universe $\mbfL$, relates to category theory in the true universe $\mbfV$. However, if $\cat{\Set}_{\mbfL}$ is the (meta)category of $\mbfL$-sets and $\cat{\Set}_{\mbfV}$ is the (meta)category of $\mbfV$-sets, then the inclusion $\cat{\Set}_{\mbfL} \embedinto \cat{\Set}_{\mbfV}$ need not be full, or even conservative. This appears to be a severe obstacle to the deployment of category-theoretic tools in solving this problem.

\ifdraftdoc

\else
  \printbibliography
\fi

\end{document}